\newtheoremstyle{break}{9pt}{9pt}{}{}{\bfseries}{.}{\newline}{}
\theoremstyle{break}
\newcommand\suchthat{\@ifstar
  {\mathrel{}\middle\vert\mathrel{}}
  {\mid}}
\newtheorem{theorem}{Theorem}[section]
\newtheorem{lemma}[theorem]{Lemma}
\newtheorem{remark}[theorem]{Remark}
\newcommand{\eps}{{\epsilon}}
\newcommand{\equivalent}{ \Longleftrightarrow }
\newcommand{\Trace}{\operatorname{Tr}}
\newcommand{\trace}{\operatorname{tr}}
\newcommand{\Ext}{\operatorname{Ext}}
\newcommand{\ext}{\operatorname{ext}}
\newcommand{\cartan}{{\mathsf d}}
\newcommand{\cartanlambda}{{\mathsf d}\lambda}
\newcommand{\nablalambda}{\nabla\lambda}
\newcommand{\curl}{\operatorname{curl}}
\newcommand{\divergence}{\operatorname{div}}
\DeclareMathOperator*{\linhull}{span}
\newcommand{\bbC}{{\mathbb C}}
\newcommand{\bbN}{{\mathbb N}}
\newcommand{\bbZ}{{\mathbb Z}}
\newcommand{\calB}{{\mathcal B}}
\newcommand{\calP}{{\mathcal P}}
\newcommand{\calS}{{\mathcal S}}
\newcommand{\calT}{{\mathcal T}}
\newcommand{\calV}{{\mathcal V}}
\newcommand{\springerqed}{}
\begin{document}

\title
[Bases FEEC]
{On Basis Constructions in\\ Finite Element Exterior Calculus}

\author{Martin W. Licht}

\address{\'Ecole Polytechnique F\'ed\'erale de Lausanne (EPFL), 1015 Lausanne, Switzerland}

\email{martin.licht@epfl.ch}

\thanks{This research was supported by the European Research Council through 
the FP7-IDEAS-ERC Starting Grant scheme, project 278011 STUCCOFIELDS.
This research was supported in part by NSF DMS/RTG Award 1345013 and DMS/CM Award 1262982.
The author would like to thank the Isaac Newton Institute for Mathematical Sciences, Cambridge, 
for support and hospitality during the programme "Geometry, compatibility and structure preservation in computational differential equations" 
where work on this paper was undertaken. This work was supported by EPSRC grant no EP/K032208/1.
Parts of this article are based on the author's PhD Thesis.}

\subjclass[2010]{65N30}

\keywords{barycentric differential form, canonical spanning sets, degrees of freedom, finite element exterior calculus, geometrically decomposed bases}

\begin{abstract}
  We give a systematic self-contained exposition of
  how to construct geometrically decomposed bases and degrees of freedom
  in finite element exterior calculus. 
In particular, we elaborate upon a previously overlooked basis 
  for one of the families of finite element spaces, 
  which is of interest for implementations. 
Moreover, 
  we give details for the construction of isomorphisms and duality pairings 
  between finite element spaces.
  These structural results show, for example, 
  how to transfer linear dependencies between canonical spanning sets,
  or how to derive the degrees of freedom. 
\end{abstract}

\maketitle

\section{Introduction}
\label{sec:introduction}

\emph{Exterior calculus} is a canonical approach towards mathematical electromagnetism.
Utilizing exterior calculus in numerical analysis hence seems to be a natural choice. 
\emph{Finite element exterior calculus} (FEEC \cite{AFW2}) formalizes several well-known finite element methods 
in the language of exterior calculus. 
A particular achievement of FEEC has been the classification of \emph{finite element de~Rham complexes}.
Research in finite element exterior calculus has provided a unified framework for 
bases, degrees of freedom, and geometric decompositions for classical vector-valued finite element spaces
(see \cite{AFW1,AFWgeodecomp,hiptmair2001higher,hiptmair2002finite,rapetti2007geometrical,rapetti2009whitney}, for example).

This exposition addresses the construction of spanning sets, bases, and degrees of freedom in finite element exterior calculus. 
We give a systematic and comprehensive account of results which previously had remained distributed 
over different sources in the literature.
While that serves an expository purpose, 
our presentation leads to new algebraic insights into finite element exterior calculus. 
Recall that the major difficulty in the theory of vector-valued finite element spaces,
such as Brezzi-Douglas-Marini spaces, Raviart-Thomas spaces, and N\'ed\'elec spaces,
is that these spaces have no canonical bases but merely canonical spanning sets. 
This is a critical difference to scalar-valued finite element spaces. 
We contribute a full description of the linear dependencies within the canonical spanning sets. 
This allows an explicit description of the canonical isomorphisms and duality pairings 
in finite element exterior calculus. 
Our exposition furthermore reveals a simple but previously overlooked basis for one family of finite element differential forms. 
\\

Quite remarkably, 
finding explicit bases for the Brezzi-Douglas-Marini space and the N\'ed\'elec spaces of the second kind 
seems to have been an open problem for quite some time even though these spaces have been known long since. 
The original articles by Brezzi, Douglas, and Marini \cite{brezzi1985two} and N\'ed\'elec \cite{nedelec1986new}
describe the degrees of freedom, 
but it seems that explicit bases for any polynomial degree have appeared in the literature only twenty years later:
we point out the contributions by 
Arnold, Falk, and Winther \cite{AFWgeodecomp}, Ervin \cite{ervin2012computational}, and Bentley \cite{bentley2017explicit}.  
Explicit bases for the Raviart-Thomas spaces and the N\'ed\'elec spaces of the first kind 
have appeared around the same time (e.g.\ \cite{gopalakrishnan2005nedelec}). 
We describe examples for our bases later in this article (see Section~\ref{sec:finiteelementspaces}) 
after having established the notation.

Finding geometrically decomposed bases and degrees of freedom 
is much more challenging for vector-valued than for scalar-valued finite element spaces. 
The major difference is that the canonical spanning sets of the former are linearly dependent
while the ones of the latter are linearly independent. 
This exposition establishes explicit formulas 
for finite element basis forms in barycentric coordinates that are simple and readily implementable. 
Our bases for the spaces of higher order Whitney forms $\calP_{r}^{-}\Lambda^{k}(T)$ and $\mathring\calP^{-}_{r}\Lambda^{k}(T)$
coincide with the bases in \cite{AFWgeodecomp}. 
By contrast, 
we propose bases for the spaces $\calP_{r}^{}\Lambda^{k}(T)$ and $\mathring\calP_{r}^{}\Lambda^{k}(T)$ that have not yet been stated in the literature; 
in particular, they are different from the bases in \cite{AFWgeodecomp}. 
These bases seems to have gone unnoticed until now but are of appealing simplicity.

The bases for the finite element spaces in this article are subsets of the canonical spanning sets. 
We contribute a complete classification of the linear dependencies in those canonical spanning sets.
This improves our understanding of two concepts of finite element exterior calculus:
\emph{canonical isomorphisms} and \emph{duality pairings} between finite element spaces of differential forms. 
Whenever $T$ is an $n$-dimensional simplex 
and $k$ and $r$ are non-negative integers, we have isomorphisms 
\begin{gather}
 \label{math:introductionofisomorphisms}
 \calP_r\Lambda^{k}(T)
 \simeq
 \mathring\calP^{-}_{r+k+1}\Lambda^{n-k}(T)
 ,
 \quad
 \calP^{-}_{r+1}\Lambda^{k}(T)
 \simeq
 \mathring\calP_{r+k+1}\Lambda^{n-k}(T)
 .
\end{gather} 
Here, the spaces on the right-hand sides have vanishing trace along the simplex boundary. 
Moreover, 
to each of these two isomorphic pairs corresponds a non-degenerate duality pairing,
which is induced by the integral pairing over $T$. 
The seminal publication by Arnold, Falk, and Winther \cite{AFW1} introduces those isomorphisms and duality pairings. 
Subsequent work by Christiansen and Rapetti \cite{christiansen2013high} 
states that the first isomorphism of \eqref{math:introductionofisomorphisms} 
preserves the canonical spanning sets. 
Building upon their contribution, 
we use new proof techniques to show that not only the first but also the second isomorphisms preserve the canonical spanning sets. 
Consequently, these isomorphisms translate linear dependencies and independencies between these spaces. 
This leads to an explicit description of the duality pairings between finite element spaces 
and a new, straightforward way of deriving the degrees of freedom in finite element exterior calculus.

While much research on vector-valued finite element spaces has addressed 
aspects such as condition numbers, sparsity properties, hierarchical structures, or fast evaluation 
\cite{ainsworth2003hierarchic,schoberl2005high,ainsworth2011bernstein,beuchler2012sparsity,beuchler2013sparsity,kirby2014low,kirby2018general},
we address purely algebraic and combinatorial aspects. 
For the purpose of overview, 
we recapitulate previous works on basis constructions in finite element exterior calculus from the literature. 
The seminal exposition by Arnold, Falk, and Winther \cite[Chapter 4]{AFW1}
begins by devising a basis for $\calP_{r}^{-}\Lambda^{k}(T)$,
then determines a geometrically decomposed basis of the dual space $\calP_{r}\Lambda^{k}(T)^{\ast}$,
and subsequently a geometrically decomposed basis of the dual space $\calP_{r}^{-}\Lambda^{k}(T)^{\ast}$.
After outlining bases for spaces with vanishing trace,
they find geometrically decomposed bases for $\calP_{r}^{-}\Lambda^{k}(T)$
and, implicitly, for $\calP_{r}\Lambda^{k}(T)$. 
In a successive article \cite{AFWgeodecomp},
we are given bases for the spaces with vanishing trace,
$\mathring\calP^{-}_{r}\Lambda^{k}(T)$ and $\mathring\calP_{r}\Lambda^{k}(T)$.
The latter publication studies extension operators 
and geometrically decomposed bases but
their basis of $\calP_{r}\Lambda^{k}(T)$ in \cite{AFWgeodecomp}
is generally different from the one in \cite{AFW1}. 
Our present exposition develops the theory of bases and degrees of freedom in a different manner,
adapting techniques from prior research on finite element differential forms 
\cite{AFW1,AFWgeodecomp,christiansen2013high,hiptmair2001higher,rapetti2007geometrical,rapetti2009whitney}. 
Most importantly, we directly construct geometrically decomposed bases 
for the two families of finite element spaces independently from each other. 
In particular, our basis for $\calP_{r}\Lambda^{k}(T)$
contains the basis of $\mathring\calP_{r}\Lambda^{k}(T)$ as a subset. 
We emphasize that we develop bases 
without referring to any degrees of freedom in the first place.

The remainder of this work is structured as follows.
In Section~\ref{sec:combinatorics}, 
we review combinatorial results, exterior calculus, and polynomial differential forms.
Section~\ref{sec:auxiliarylemmas} summarizes some auxiliary lemmas. 
In Section~\ref{sec:finiteelementspaces}, we introduce spaces of polynomial differential forms, 
construct geometrically decomposed bases, and define extension operators. 
Subsequently, we study the isomorphy relations in Section~\ref{sec:lineardependencies} 
and the duality pairings in Section~\ref{sec:duality}.
We supplement applications to finite element spaces over triangulations in Section~\ref{sec:geometricdecompositions}.

\section{Notation and Definitions}
\label{sec:combinatorics}

We introduce and discuss notions regarding combinatorics and differential forms over simplices. 
All vector spaces in this publication are over the complex numbers unless noted otherwise;
we write $\overline z \in \bbC$ for the complex conjugate of $z \in \bbC$. 

\subsection{Combinatorics}
\label{subsec:combinatorics}

We write $[m:n] = \{m, \dots, n\}$ for $m, n \in \bbZ$. 
Note that this set may also be empty.
For $m, n \in \bbZ$ with $m \neq n$
we let $\eps(m,n) = 1$ if $m < n$ and $\eps(m,n) = -1$ if $m > n$. 

For any mapping $\alpha \colon [0:n] \rightarrow \bbN_{0}$ 
we write $\lvert\alpha\rvert := \sum_{i=0}^{n} \alpha(i)$. 
Given numbers $r, n \in \bbN_{0}$, 
we let $A(r,n)$ be the set of all mappings $\alpha \colon [0:n] \rightarrow \bbN_{0}$
for which $\lvert\alpha\rvert = r$.
The set $A(r,n)$ is also known as the set of \emph{multiindices} of degree $r$ over the set $[0:n]$. 
In this article, $r$ will usually be the polynomial degree of a finite element space.
The sum $\alpha+\beta$ of $\alpha \in A(r,n)$ and $\beta \in A(s,n)$
is defined in the obvious manner as a member of $A(r+s,n)$.
Whenever $\alpha \in A(r,n)$, we write
\begin{align}
 \label{math:bracketofmultiindex}
 [\alpha] := \left\{\; i \in [0:n] \suchthat \alpha(i) > 0 \;\right\},
\end{align}
and we write $\lfloor\alpha\rfloor$ for the minimal element of $[\alpha]$ 
provided that $[\alpha]$ is not empty, and $\lfloor\alpha\rfloor = \infty$ otherwise.
When $\alpha \in A(r,n)$ and $p \in [0:n]$,
then $\alpha + p$ denotes the unique member of $A(r+1,n)$ satisfying $(\alpha+p)(p) = \alpha(p)+1$
and coinciding with $\alpha$ otherwise;
similarly, when $p \in [\alpha]$, then 
$\alpha - p$ denotes the unique member of $A(r-1,n)$ satisfying $(\alpha-p)(p) = \alpha(p)-1$
and coinciding with $\alpha$ otherwise. 

For $a,b,m,n \in \bbN_{0}$,
we let $\Sigma(a:b,m:n)$ be the set of strictly ascending mappings from $[a:b]$ to $[m:n]$.
We call those mappings also \emph{alternator indices}. 
We write $\Sigma(a:b,m:n) := \{\emptyset\}$ whenever $a > b$.
For any $\sigma \in \Sigma(a:b,m:n)$ we let 
\begin{align}
 \label{math:bracketofsigma}
 [\sigma] := \left\{\;  \sigma(i) \suchthat i \in [a:b] \; \right\},
\end{align}
and we write $\lfloor\sigma\rfloor$ for the minimal element of $[\sigma]$  
provided that $[\sigma]$ is not empty, and $\lfloor\sigma\rfloor = \infty$ otherwise.
Furthermore, if $q \in [m:n] \setminus [\sigma]$, then we write $\sigma + q$
for the unique element of $\Sigma(a:{b+1},m:n)$ with image $[\sigma]\cup\{q\}$.
In that case, we also write $\eps(q,\sigma)$ for the sign of the permutation 
that orders the sequence $q, \sigma(a), \dots, \sigma(b)$ in ascending order,
and we write $\eps(\sigma,q)$ for the sign of the permutation 
that orders the sequence $\sigma(a), \dots, \sigma(b), q$ in ascending order.
Similarly, if $p \in [\sigma]$, then we write $\sigma - p$ 
for the unique element of $\Sigma(a:b-1,m:n)$ with image $[\sigma]\setminus\{p\}$.
Note that $\eps(\sigma,q) = (-1)^{b-a+1} \eps(q,\sigma)$. 

We abbreviate $\Sigma(k,n) = \Sigma(1:k,0:n)$
and $\Sigma_{0}(k,n) = \Sigma(0:k,0:n)$. 
If $n$ is understood and $k, l \in [0:n]$,
then for any $\sigma \in \Sigma(k,n)$
we define $\sigma^c \in \Sigma_{0}(n-k,n)$
by the condition $[\sigma]\cup[\sigma^c] = [0:n]$,
and for any $\rho \in \Sigma_{0}(l,n)$
we define $\rho^c \in \Sigma(n-l,n)$
by the condition $[\rho]\cup[\rho^c] = [0:n]$.
In particular, $\sigma^{cc} = \sigma$ and $\rho^{cc} = \rho$.
Note that $\sigma^c$ and $\rho^c$ implicitly depend on $n$,
the value of which will always be clear from context. 

When $\sigma \in \Sigma(k,n)$ and $\rho \in \Sigma_{0}(l,n)$
with $[\sigma]\cap[\rho] = \emptyset$,
then $\eps(\sigma,\rho)$ denotes the sign of the permutation ordering the sequence
$\sigma(1),\dots,\sigma(k),\rho(0),\dots,\rho(l)$ in ascending order, and we let
$\sigma+\rho \in \Sigma(0:k+l,0:n)$
be the unique strictly ascending mapping from $[0:k+l]$ to $[0:n]$
whose image is the set $[\sigma]\cup[\rho]$.

\begin{remark} 
    Several times in this article we employ combinatorial identities 
    that relate signs of permutations. Intuitively, these often follow
    from different procedures of ordering some finite sequence of numbers
    $a_1, a_2, \dots, a_N$ into ascending order.
    It is most helpful to keep in mind 
    that the sign of a permutation is precisely the parity of the number of 
    \emph{transpositions of adjacent slots} that order the sequence.       
\end{remark}

\subsection{Simplices}
\label{subsec:simplices}

Let $n \in \bbN_{0}$. 
An $n$-dimensional simplex $T$ is the convex closure 
of pairwise distinct points $v^T_0, \dots, v^T_n$ in Euclidean space,
called the \emph{vertices} of $T$,
such that the vertices are an affinely independent set. 
Note that the dimension of the ambient Euclidean space 
must be at least $n$ for any simplex to exist, but otherwise it does not matter.
We call $F \subseteq T$ a \emph{subsimplex} of $T$
if the set of vertices of $F$ is a subset of the set of vertices of $T$. 

An \emph{ordered simplex} is a simplex with an ordering of its set of vertices (see \cite{FuchsViro}).
We henceforth assume that all simplices in this article are ordered. 
Without loss of generality, we assume moreover that subsimplices order their vertices in the same manner as those vertices are ordered in their parent simplices.

Suppose that $F$ is an $m$-dimensional subsimplex of $T$
with ordered vertices $v^F_0, \dots, v^F_m$. 
We write $\imath(F,T) \colon F \rightarrow T$ for the set inclusion of $F$ into $T$.
With a mild abuse of notation, we let $\imath(F,T) \in \Sigma_{0}(m,n)$
denote the unique mapping that satisfies $v^{T}_{\imath(F,T)(i)} = v^{F}_{i}$.
Loosely speaking, this mapping assigns to each vertex index of $F$
the corresponding vertex index of $T$, and the mapping is ascending 
because we assume that $F$ orders its vertices in the same manner as $T$ does.

\subsection{Barycentric Coordinates and Differential Forms}
\label{subsec:barycentriccoordinates}

Let $T$ be a simplex of dimension $n$. 
Following the notation of \cite{AFW1},
we write $\Lambda^{k}(T)$ for the space of \emph{differential $k$-forms} over $T$ 
with smooth bounded coefficients of all orders. Recall that a differential form is a function which takes values 
in the $k$-th exterior power 
of the dual of the tangential space of the simplex $T$. 
In the case $k=0$, the space $\Lambda^{0}(T) = C^{\infty}(T)$
is just the space of smooth functions over $T$ 
with uniformly bounded derivatives. 
Furthermore, $\Lambda^{k}(T)$ is the trivial vector space unless $0 \leq k \leq n$. 

We recall the \emph{exterior product} $\omega \wedge \eta \in \Lambda^{k+l}(T)$
for $\omega \in \Lambda^{k}(T)$ and $\eta \in \Lambda^{l}(T)$
and that it satisfies $\omega \wedge \eta = (-1)^{kl} \eta \wedge \omega$. 
We let $\cartan \colon \Lambda^{k}(T) \rightarrow \Lambda^{k+1}(T)$
denote the \emph{exterior derivative}. 
It satisfies $\cartan\left( \omega \wedge \eta \right) = \cartan\omega \wedge \eta + (-1)^{k} \omega \wedge \cartan\eta$
for $\omega \in \Lambda^{k}(T)$ and $\eta \in \Lambda^{l}(T)$. 
We also recall that the integral $\int_{T} \omega$ of a differential $n$-form over $T$ is well-defined.
The sign of that integral is determined by the orientation of the simplex. 
We refer to \cite{AFW1} and \cite{LeeSmooth} for more background. 
In this article, we focus on a special class of differential forms,
namely \emph{barycentric differential forms}. 
\\

The \emph{barycentric coordinates} $\lambda^T_0, \lambda^T_1, \dots, \lambda^T_n \in \Lambda^{0}(T)$
are the unique affine functions over $T$ that satisfy the \emph{Lagrange property}
\begin{align}
 \label{math:lagrangeproperty}
 \lambda^T_{i}( v_j ) = \delta_{ij},
 \quad
 i,j \in [0:n].
\end{align}
The barycentric coordinate functions of $T$ are linearly independent 
and constitute a partition of unity:
\begin{align}
 \label{math:partitionofunity}
 1 = \lambda^T_0 + \lambda^T_1 + \dots + \lambda^T_n
 .
\end{align}
We write $\cartanlambda_0^{T}, \cartanlambda_1^{T}, \dots, \cartanlambda_n^{T} \in \Lambda^{1}(T)$ 
for the exterior derivatives of the barycentric coordinates,
which are differential $1$-forms and constitute a partition of zero:  
\begin{align}
 \label{math:partitionofzero}
 0 = \cartanlambda^T_0 + \cartanlambda^T_1 + \dots + \cartanlambda^T_n
 .
\end{align}
Up to scaling, this is the only linear dependence
between the exterior derivatives of the barycentric coordinate functions.
\\

We consider several classes of differential forms over $T$
that are expressed in terms of the barycentric polynomials and their exterior derivatives.
When $r \in \bbN_{0}$ and $\alpha \in A(r,n)$, 
then the corresponding \emph{barycentric polynomial} over $T$ is 
\begin{align}
 \label{math:definitionbarycentricpolynomial}
 \lambda_T^{\alpha}
 &:=
 \prod_{ i=0 }^{n} (\lambda^T_i)^{\alpha(i)}
 .
\end{align}
When $a,b \in \bbN_{0}$ and $\sigma \in \Sigma(a:b,0:n)$,
then the corresponding \emph{barycentric alternator} is 
\begin{align}
 \label{math:definitionbarycentricdifferentialform}
 \cartanlambda^T_{\sigma} 
 :=
 \cartanlambda^T_{\sigma(a)} \wedge\dots\wedge \cartanlambda^T_{\sigma(b)}
 .
\end{align}
Here, by definition, $\cartanlambda^{T}_{\emptyset} := 1$ in the special case $\sigma = \emptyset$.
Also, we occasionally write
\begin{align}
    \label{math:alternatorpolynomial} 
    \lambda_{\sigma}^{T} = \lambda_{\sigma(a)}^{T} \cdots \lambda_{\sigma(b)}^{T}
    .
\end{align}
Finally, 
whenever $a,b \in \bbN_{0}$ and $\rho \in \Sigma(a:b,0:n)$,
then the corresponding \emph{Whitney form} is 
\begin{align}
 \label{math:definitionwhitneyform}
 \phi^{T}_{\rho}
 :=
 \sum_{p \in [\rho]} \eps(p,\rho-p) \lambda^{T}_{p} \cartanlambda^{T}_{\rho-p}
 .
\end{align}
In the special case 
when $\rho_{T} \colon [0:n] \rightarrow [0:n]$ is the single member of $\Sigma_{0}(n,n)$,
we then write $\phi_{T} := \phi_{\rho_{T}}$ for the associated Whitney form.

In what follows, the differential forms \eqref{math:definitionbarycentricpolynomial},
\eqref{math:definitionbarycentricdifferentialform}, \eqref{math:definitionwhitneyform}, their sums, and their exterior products are called \emph{barycentric differential forms} over $T$.

\begin{remark}
  Whenever a fixed simplex $T$ is understood and there is no danger of ambiguity,
  we may simplify the notation by writing 
  \begin{align*}
    \lambda_i \equiv \lambda^T_i,
    \quad
    \lambda^{\alpha} \equiv \lambda_T^\alpha,
    \quad
    \cartanlambda_{\sigma} \equiv \cartanlambda_\sigma^T,
    \quad
    \lambda_{\sigma} \equiv \lambda_\sigma^T,
    \quad 
    \phi_{\rho} \equiv \phi_\rho^T
    .
  \end{align*}
  This notational convention is inspired by \cite{AFWgeodecomp} and \cite{christiansen2013high}.
  With our choice of notation, the simplex $T$ is always a superscript 
  except for the barycentric monomials.
\end{remark}

\subsection{Traces}
\label{subsec:traces}

Let $T$ be an $n$-dimensional simplex and let $F \subseteq T$ be a subsimplex of $T$ of dimension $m$.
The inclusion $\imath(F,T) \colon F \rightarrow T$ introduced above 
naturally induces a mapping $\trace_{T,F} \colon \Lambda^{k}(T) \rightarrow \Lambda^{k}(F)$
by taking the pullback.
We call $\trace_{T,F}$ the \emph{trace} from $T$ onto $F$.
It is well-known that $\cartan \trace_{T,F} \omega = \trace_{T,F} \cartan \omega$
for all $\omega \in \Lambda^{k}(T)$, that is, 
the exterior derivative commutes with taking traces. 
In the case of $0$-forms, the trace is just the natural restriction of functions,
and in the case of $1$-forms, we may think of it as a tangential trace. 

Taking into account the ordering of the vertices, 
we get explicit formulas for the traces of barycentric differential forms.
Write $[\imath(F,T)]$ for the set of indices of those vertices of $T$
that are also vertices of $F$.
We let 
$\imath(F,T)^{\dagger} \colon [\imath(F,T)] \rightarrow [0:m]$
be the inverse of the mapping 
$\imath(F,T) \colon [0:m] \rightarrow [\imath(F,T)]$.
\\

Consider $i \in [0:n]$. 
If $i \notin [\imath(F,T)]$, then $v_{i}^{T}$ is a vertex of $T$ that is not a vertex of $F$,
and in that case we have $\trace_{T,F} \lambda^T_i = 0$.
If instead $i \in [\imath(F,T)]$,
then there exists $j \in [0:m]$ such that $i = \imath(F,T)(j)$,
and in that case we have $\trace_{T,F} \lambda^T_i = \lambda_j^F$
or, equivalently, $\trace_{T,F} \lambda^T_i = \lambda_{\imath(F,T)^{\dagger}(i)}^F$. 
Analogous observations follow for the exterior derivatives of the barycentric coordinates.

Let $\alpha \in A(r,n)$ be a multiindex. 
If $[\alpha] \nsubseteq [\imath(F,T)]$, 
then $\trace_{T,F} \lambda_T^\alpha = 0$.
If instead $[\alpha] \subseteq [\imath(F,T)]$,
then there exists $\widehat\alpha \in A(r,m)$ with
$\widehat\alpha = \alpha \circ \imath(F,T)$,
and we have 
\begin{align}
 \trace_{T,F} \lambda_T^{\alpha} = \lambda_F^{\widehat\alpha}
 .
\end{align}
Let $\sigma \in \Sigma(a:b,0:n)$ be an alternator index. 
If $[\sigma] \nsubseteq [\imath(F,T)]$, 
then we have $\trace_{T,F} \cartanlambda^T_\sigma = 0$.
If instead $[\sigma] \subseteq [\imath(F,T)]$,
then there exists $\widehat\sigma \in \Sigma(a:b,0:m)$ with
$\imath(F,T) \circ \widehat\sigma = \sigma$,
or equivalently, $\widehat\sigma = \imath(F,T)^{\dagger} \circ \sigma$,
and we then have 
\begin{align}
 \trace_{T,F} \cartanlambda_{\sigma}^{T} = \cartanlambda_{\widehat\sigma}^{F},
 \quad 
 \trace_{T,F} \phi_{\sigma}^{T} = \phi_{\widehat\sigma}^{F}. 
\end{align}

\section{Auxiliary Lemmas}
\label{sec:auxiliarylemmas}

In this section we provide some auxiliary results 
regarding barycentric differential forms over an $n$-dimensional simplex $T$.

\begin{lemma} \label{prop:barycentricdifferentialscombinatorics}
If $\sigma \in \Sigma(a:b,m:n)$ and $p \in [\sigma]$, 
 then
 \begin{align}
  \label{math:barycentricdifferentialscombinatorics}
  \cartanlambda_{\sigma} = \eps( p, \sigma - p ) \cartanlambda_{p} \wedge \cartanlambda_{\sigma - p}.
 \end{align}
\end{lemma}
\begin{proof}
 This is a simple calculation.
\springerqed\end{proof}

\begin{lemma} \label{prop:whitneyformrecursion}
 Let $k \in [0:n]$.
 If $\rho \in \Sigma_{0}(k,n)$ and $q \in [0:n]$ with $q \notin [\rho]$,
 then
 \begin{align}
  \label{math:whitneyformrecursion}
  \eps(q,\rho) \phi_{\rho+q}
  =
  \lambda_{q} \cartanlambda_{\rho} 
  -
  \cartanlambda_q \wedge \phi_\rho
.
 \end{align}
\end{lemma}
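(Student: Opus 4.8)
The plan is to expand $\phi_{\rho+q}$ directly from the defining formula \eqref{math:definitionwhitneyform} and to isolate the contribution of the newly inserted index $q$. Since $q \notin [\rho]$ we have $[\rho+q] = [\rho] \cup \{q\}$, so I split the defining sum into the single term indexed by $p=q$ and the terms indexed by $p \in [\rho]$. For the term $p=q$ I use $(\rho+q)-q = \rho$, which produces the summand $\eps(q,\rho)\,\lambda_q\,\cartanlambda_\rho$; after multiplying the whole identity through by $\eps(q,\rho)$ (and using $\eps(q,\rho)^2 = 1$) this becomes precisely the first term $\lambda_q\,\cartanlambda_\rho$ on the right-hand side of \eqref{math:whitneyformrecursion}.

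For each $p \in [\rho]$ the corresponding summand of $\phi_{\rho+q}$ involves $\cartanlambda_{(\rho+q)-p}$, and here I would rewrite the alternator index as $(\rho+q)-p = (\rho-p)+q$, since both are the ascending index with image $([\rho]\cup\{q\})\setminus\{p\}$. Applying the wedge rule \eqref{math:barycentricdifferentialscombinatorics}, which depends only on the image of the alternator index, to pull the factor $\cartanlambda_q$ to the front then gives $\cartanlambda_{(\rho-p)+q} = \eps(q,\rho-p)\,\cartanlambda_q \wedge \cartanlambda_{\rho-p}$. Collecting these contributions, the part of $\eps(q,\rho)\,\phi_{\rho+q}$ coming from $p \in [\rho]$ is a sum over $p$ whose $p$-th term is $\eps(q,\rho)\,\eps(p,(\rho-p)+q)\,\eps(q,\rho-p)$ times $\lambda_p\,\cartanlambda_q \wedge \cartanlambda_{\rho-p}$.

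The crux is the purely combinatorial sign identity
\begin{align*}
 \eps(q,\rho)\,\eps\big(p,(\rho-p)+q\big)\,\eps(q,\rho-p) = -\eps(p,\rho-p),
\end{align*}
which I would establish by inversion counting: each factor $\eps(\cdot,\cdot)$ equals $-1$ raised to the number of elements of the relevant ascending index set lying below the distinguished single index. Comparing $\eps(q,\rho)$ with $\eps(q,\rho-p)$ shows their product equals $\eps(q,p)$, because deleting $p$ from $[\rho]$ changes the count of elements below $q$ exactly when $p < q$. Comparing $\eps(p,(\rho-p)+q)$ with $\eps(p,\rho-p)$ shows the former equals $-\eps(q,p)\,\eps(p,\rho-p)$, because inserting $q$ changes the count of elements below $p$ exactly when $q < p$. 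Multiplying these two relations, the factors $\eps(q,p)$ square to $1$ and the claimed sign $-\eps(p,\rho-p)$ drops out.

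Substituting the sign identity back, the residual sum equals $-\sum_{p\in[\rho]} \eps(p,\rho-p)\,\lambda_p\,\cartanlambda_q \wedge \cartanlambda_{\rho-p}$, which by \eqref{math:definitionwhitneyform} and linearity of the wedge product in its second argument is exactly $-\cartanlambda_q \wedge \phi_\rho$. Together with the $p=q$ term this yields \eqref{math:whitneyformrecursion}. I expect the only delicate point to be the bookkeeping of the four signs in the combinatorial identity; the remaining steps are a direct expansion of the definitions together with the elementary wedge rule \eqref{math:barycentricdifferentialscombinatorics}.
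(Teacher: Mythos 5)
Your proof is correct and follows essentially the same route as the paper's: both expand $\phi_{\rho+q}$ from the definition \eqref{math:definitionwhitneyform}, isolate the term indexed by $q$ (which produces $\lambda_q\,\cartanlambda_\rho$), pull $\cartanlambda_q$ out of the remaining terms via \eqref{math:barycentricdifferentialscombinatorics}, and finish with a combinatorial sign identity --- yours is exactly the paper's identity multiplied through by $\eps(q,\rho)$. Your inversion-counting justification of the signs is sound and, if anything, slightly more explicit than the paper's chain of factorizations.
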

\begin{proof}
  Letting $\rho$ and $q$ be as in the statement of the lemma,
  we use the definition of Whitney forms \eqref{math:definitionwhitneyform}
  and \eqref{math:barycentricdifferentialscombinatorics}
  to find 
  \begin{align*}
    \phi_{\rho+q}
    &
    =
    \sum_{ l \in [\rho+q] }
    \eps(l,\rho+q-l) \lambda_{l} \cartanlambda_{\rho+q-l}
    \\&
    =
    \eps(q,\rho) \lambda_{q} \cartanlambda_{\rho}
    +
    \sum_{ l \in [\rho] }
    \eps(l,\rho+q-l) \lambda_{l} \cartanlambda_{\rho+q-l}
    \\&
    =
    \eps(q,\rho) \lambda_{q} \cartanlambda_{\rho}
    +
    \cartanlambda_q \wedge 
    \sum_{ l \in [\rho] }
    \eps(l,\rho+q-l) \eps(q,\rho-l) \lambda_{l} \cartanlambda_{\rho-l}
    .
  \end{align*}
  For any $l \in [\rho]$ one finds that
  \begin{gather*}
    \eps(l,\rho+q-l) = \eps(l,q) \eps(l,\rho-l)
    , \quad 
    \eps(q,\rho-l) = \eps(q,l) \eps(q,\rho)
    , \quad 
    \eps(l,q) = -\eps(q,l)
    .
  \end{gather*}
  This observation leads to 
  \begin{align*}
    \phi_{\rho+q}
    &
    =
    \eps(q,\rho) \lambda_{q} \cartanlambda_{\rho}
    -
    \cartanlambda_q \wedge 
    \sum_{ l \in [\rho] }
    \eps(l,\rho-l) \eps(q,\rho) \lambda_{l} \cartanlambda_{\rho-l}
    \\
    &
    =
    \eps(q,\rho) \lambda_{q} \cartanlambda_{\rho}
    -
    \cartanlambda_q \wedge 
    \eps(q,\rho) 
    \phi_{\rho}
    .
  \end{align*}
  We multiply both sides with $\eps(q,\rho)$, and the desired result follows. 
\springerqed\end{proof}

Whenever $k \in [0:n]$ and $\rho \in \Sigma_{0}(k,n)$,
then definitions and Lemma~\ref{prop:barycentricdifferentialscombinatorics} 
show the differential of the corresponding Whitney form is 
\begin{align}
 \label{math:whitneyformdifferential}
 \cartan\phi_{\rho} = (k+1) \cartanlambda_{\rho}
 .
\end{align}
A converse to that is the next result, 
which has appeared as Proposition 3.4 in \cite{christiansen2013high},
and also as Equation (6.6) in \cite{AFWgeodecomp}.

\begin{lemma} \label{prop:differentialdecomposition}
 Let $k \in [0:n]$ and $\rho \in \Sigma_{0}(k,n)$.
 Then 
 \begin{align}
  \label{math:differentialdecomposition}
  \cartanlambda_\rho
  =
  \sum_{ q \in [\rho^c] } \eps(q,\rho) \phi_{\rho+q}.
 \end{align}
\end{lemma}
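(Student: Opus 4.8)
The plan is to sum the Whitney form recursion of Lemma~\ref{prop:whitneyformrecursion} over all $q \in [\rho^c]$ and then simplify the right-hand side using the partition of unity and the partition of zero. Since every $q \in [\rho^c]$ satisfies $q \notin [\rho]$, the recursion \eqref{math:whitneyformrecursion} applies to each such $q$, and summing gives
\[
 \sum_{q\in[\rho^c]}\eps(q,\rho)\phi_{\rho+q}
 =
 \Big(\sum_{q\in[\rho^c]}\lambda_q\Big)\cartanlambda_\rho
 -
 \Big(\sum_{q\in[\rho^c]}\cartanlambda_q\Big)\wedge\phi_\rho .
\]

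First I would rewrite the two coefficient sums. Because $[\rho]$ and $[\rho^c]$ partition $[0:n]$, the partition of unity \eqref{math:partitionofunity} gives $\sum_{q\in[\rho^c]}\lambda_q = 1 - \sum_{p\in[\rho]}\lambda_p$, and the partition of zero \eqref{math:partitionofzero} gives $\sum_{q\in[\rho^c]}\cartanlambda_q = -\sum_{p\in[\rho]}\cartanlambda_p$. Substituting both identities, the right-hand side becomes
\[
 \cartanlambda_\rho
 -
 \Big(\sum_{p\in[\rho]}\lambda_p\Big)\cartanlambda_\rho
 +
 \Big(\sum_{p\in[\rho]}\cartanlambda_p\Big)\wedge\phi_\rho .
\]
Thus the claim reduces to proving the single identity $\big(\sum_{p\in[\rho]}\cartanlambda_p\big)\wedge\phi_\rho = \big(\sum_{p\in[\rho]}\lambda_p\big)\cartanlambda_\rho$.

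To establish that identity I would expand $\phi_\rho$ via its definition \eqref{math:definitionwhitneyform} and compute
\[
 \Big(\sum_{p\in[\rho]}\cartanlambda_p\Big)\wedge\phi_\rho
 =
 \sum_{p\in[\rho]}\sum_{l\in[\rho]}\eps(l,\rho-l)\,\lambda_l\,\cartanlambda_p\wedge\cartanlambda_{\rho-l} .
\]
For $p \neq l$ the index $p$ lies in $[\rho-l]$, so $\cartanlambda_p\wedge\cartanlambda_{\rho-l}$ contains a repeated factor and vanishes; only the diagonal terms $p=l$ survive. For those, \eqref{math:barycentricdifferentialscombinatorics} gives $\cartanlambda_l\wedge\cartanlambda_{\rho-l} = \eps(l,\rho-l)\cartanlambda_\rho$, and since $\eps(l,\rho-l)^2 = 1$ the surviving sum collapses to $\big(\sum_{l\in[\rho]}\lambda_l\big)\cartanlambda_\rho$, exactly as required.

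The computation is short, so the main obstacle is conceptual rather than technical: recognizing that summing the recursion over $[\rho^c]$ is the right move and that the two partition identities convert the $[\rho^c]$-sums into $[\rho]$-sums, after which the off-diagonal cross terms cancel by the antisymmetry of the wedge product. The one point demanding care is the sign bookkeeping, namely verifying that the two factors of $\eps(l,\rho-l)$ multiply to $1$ and that no stray signs enter when invoking \eqref{math:barycentricdifferentialscombinatorics} for $\rho$.
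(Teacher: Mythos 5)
Your proposal is correct and follows essentially the same route as the paper: sum the recursion of Lemma~\ref{prop:whitneyformrecursion} over $q \in [\rho^c]$, convert the $[\rho^c]$-sums to $[\rho]$-sums via \eqref{math:partitionofunity} and \eqref{math:partitionofzero}, and observe that wedging $\cartanlambda_p$ against $\phi_\rho$ kills all but the diagonal term, which collapses by \eqref{math:barycentricdifferentialscombinatorics} since $\eps(p,\rho-p)^2 = 1$. The only difference is cosmetic ordering — you invoke the partition of unity at the start rather than at the end, as the paper does.
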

\begin{proof}
 We use Lemma~\ref{prop:whitneyformrecursion} and see
 \begin{align*}
  \sum_{q \in [\rho^c]}
  \eps(q,\rho) \phi_{\rho+q}
  &=
\sum_{q \in [\rho^c]} \lambda_{q} \cartanlambda_{\rho} 
  -
  \sum_{q \in [\rho^c] } \cartanlambda_q \wedge \phi_\rho.
\end{align*}
 Using \eqref{math:partitionofzero}, \eqref{math:definitionwhitneyform},
 \eqref{math:barycentricdifferentialscombinatorics}, and \eqref{math:partitionofunity},
 we find that the last expression equals 
 \begin{align*}
  &
  \sum_{q \in [\rho^c]} \lambda_{q}
  \cartanlambda_{\rho} 
  +
  \sum_{p \in [\rho] } \cartanlambda_p
  \wedge \phi_\rho
  =
  \sum_{q \in [\rho^c]} \lambda_{q}
  \cartanlambda_{\rho} 
  +
  \sum_{p \in [\rho] } \cartanlambda_p
  \wedge \eps(p,\rho-p) \lambda_p \cartanlambda_{\rho-p}
  \\&\qquad
  =
  \sum_{q \in [\rho^c]} \lambda_{q}
  \cartanlambda_{\rho} 
  +
  \sum_{p \in [\rho]} \lambda_{p}
  \cartanlambda_{\rho} 
  =
  \sum_{i=0}^n \lambda_i \cartanlambda_\rho
  =
  \cartanlambda_\rho
  ,
 \end{align*}
 which had to be shown. 
\springerqed\end{proof}

The following identity describes an important linear dependence between Whitney forms of higher order;
see also \cite[Equation (6.5)]{AFWgeodecomp} and \cite[Proposition 3.3]{christiansen2013high}.

\begin{lemma} \label{prop:whitneycancellationlemma}
 Let $k \in [0:n]$ and $\rho \in \Sigma_{0}(k,n)$. Then 
 \begin{align}
  \label{math:whitneycancellationlemma}
  \sum_{p \in [\rho]} \eps(p,\rho-p) \lambda_{p} \phi_{\rho-p}
=
  0.
 \end{align}
\end{lemma}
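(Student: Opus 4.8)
The plan is to expand each Whitney form $\phi_{\rho-p}$ by its definition \eqref{math:definitionwhitneyform} and to exhibit the left-hand side as a double sum over ordered pairs of distinct indices that cancels in pairs. Since $[\rho-p] = [\rho]\setminus\{p\}$ and $(\rho-p)-q$ is the alternator obtained by deleting both $p$ and $q$ from $\rho$ (which I abbreviate $\rho-p-q$), substituting gives
\[
 \sum_{p \in [\rho]} \eps(p,\rho-p)\,\lambda_{p}\,\phi_{\rho-p}
 =
 \sum_{\substack{p,q \in [\rho] \\ p \neq q}}
 \eps(p,\rho-p)\,\eps(q,\rho-p-q)\,
 \lambda_{p}\lambda_{q}\,\cartanlambda_{\rho-p-q}.
\]
For $k=0$ the set $[\rho]$ is a singleton and $\phi_{\emptyset}=0$, so the statement is trivial; I therefore assume $k\geq 1$, so that there are genuinely pairs to cancel.

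First I would observe that both the scalar factor $\lambda_{p}\lambda_{q}$ and the alternator $\cartanlambda_{\rho-p-q}$ are unchanged under exchanging $p$ and $q$, because deleting two indices from $\rho$ does not depend on the order of deletion. Hence the terms indexed by $(p,q)$ and by $(q,p)$ carry identical differential forms, and the whole sum vanishes as soon as their scalar signs are opposite, i.e.\ once I establish the antisymmetry identity
\[
 \eps(p,\rho-p)\,\eps(q,\rho-p-q)
 =
 -\,\eps(q,\rho-q)\,\eps(p,\rho-p-q).
\]

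To prove this identity cleanly I would interpret each product as a single sorting sign. Listing the elements of $[\rho]\setminus\{p,q\}$ in ascending order as $r_{1}<\dots<r_{k-1}$, the factor $\eps(q,\rho-p-q)$ is the sign of the permutation that sorts the sequence $(q,r_{1},\dots,r_{k-1})$ into the ascending list of $[\rho]\setminus\{p\}$, and $\eps(p,\rho-p)$ is the sign of the permutation that then sorts $p$ into that list. Since the signum is multiplicative under composition of permutations, the product $\eps(p,\rho-p)\,\eps(q,\rho-p-q)$ equals the sign of the permutation sorting the full sequence $(p,q,r_{1},\dots,r_{k-1})$. By the same two-stage argument, $\eps(q,\rho-q)\,\eps(p,\rho-p-q)$ is the sign of sorting $(q,p,r_{1},\dots,r_{k-1})$. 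These two sequences differ by the single transposition of their first two entries, so their sorting signs differ by a factor $-1$, which is exactly the claimed identity.

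The main obstacle is precisely this sign bookkeeping: one must apply the two-stage sorting argument against the \emph{same} reference orderings on both sides, so that the multiplicativity of $\signum$ is invoked correctly rather than against mismatched intermediate sequences. Once the identity is in place, pairing $(p,q)$ with $(q,p)$ collapses the double sum termwise to zero, which completes the proof. As an independent sanity check one can verify the identity by direct counting, using $\eps(a_{i},\rho-a_{i})=(-1)^{i}$ for the $i$-th smallest element $a_{i}$ of $[\rho]$ and tracking how removing a second index shifts these positions.
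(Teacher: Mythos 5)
Your proof is correct and follows essentially the same route as the paper's: both expand each $\phi_{\rho-p}$ via \eqref{math:definitionwhitneyform} into a double sum over ordered pairs of distinct indices and cancel the terms pairwise using a sign antisymmetry. The only cosmetic difference is that the paper reaches the antisymmetry through the identity $\eps(s,\rho-p-s)=\eps(s,\rho-s)\,\eps(s,p)$, whereas you prove the equivalent identity $\eps(p,\rho-p)\,\eps(q,\rho-p-q)=-\,\eps(q,\rho-q)\,\eps(p,\rho-p-q)$ directly via multiplicativity of the signum under composed sorting permutations.
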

\begin{proof}
 Using the definition of Whitney forms \eqref{math:definitionwhitneyform}, we derive
\begin{align*}
\sum_{p \in [\rho]} \eps(p,\rho-p) \lambda_{p} \phi_{\rho-p}
  &=
  \sum_{p \in [\rho]} \eps(p,\rho-p) \lambda_{p}
  \sum_{s \in [\rho-p]} \lambda_{s} \eps(s,\rho-p-s) \cartanlambda_{\rho-p-s}
  \\&=
  \sum_{ \substack{ p,s \in [\rho] \\ p \neq s } }
  \eps(p,\rho-p) \eps(s,\rho-p-s) \lambda_{p} \lambda_{s}
  \cartanlambda_{\rho-p-s}
  .
 \end{align*}
 We have 
$\eps(s,\rho-p-s) = \eps(s,\rho-s) \eps( s, p )$
for any $s,p \in [\rho]$ with $s \neq p$. 
It is now evident that the summands cancel out.
\springerqed\end{proof}

\section{Finite Element Spaces}
\label{sec:finiteelementspaces}

In this section we discuss two families of barycentric differential forms over simplices
and find geometrically decomposed bases. 
Throughout this section, we let $T$ be a simplex of dimension $n$,
let $r \in \bbN_{0}$, and let $k \in [0:n]$.

We are particularly interested in the following two spaces
of barycentric differential forms: 
\begin{align}
 \label{math:finiteelementspace:initial}
 \calP_r\Lambda^{k}(T)
 &:=
 \linhull 
 \left\{\;
  \lambda_T^{\alpha} \cartanlambda^{T}_{\sigma} 
  \suchthat* 
  \alpha \in A(r,n), \; \sigma \in \Sigma(k,n)
 \;\right\}
 ,
 \\
 \label{math:finiteelementspace:whitney}
 \calP^{-}_r\Lambda^{k}(T)
 &:=
 \linhull 
 \left\{\;
  \lambda_{T}^{\alpha} \phi^{T}_{\rho}
  \suchthat* 
  \alpha \in A(r-1,n), \; \rho \in \Sigma_{0}(k,n)
 \;\right\}
 .
\end{align}
We also consider subspaces of differential forms with vanishing traces: 
\begin{align}
 \mathring\calP_r\Lambda^{k}(T)
 &:= 
 \left\{\; 
  \omega \in \calP_r\Lambda^{k}(T)
  \suchthat*  
  \forall F \subsetneq T : \trace_{T,F} \omega = 0 
 \;\right\}
 ,
 \\
 \mathring\calP_r^{-}\Lambda^{k}(T)
 &:= 
 \left\{\; 
  \omega \in \calP_r^{-}\Lambda^{k}(T)
  \suchthat*  
  \forall F \subsetneq T : \trace_{T,F} \omega = 0 
 \;\right\}
 .
\end{align}
It is evident that these spaces are nested, 
as follows from Lemma~\ref{prop:differentialdecomposition} and definitions,
\begin{gather*}
 \calP_r\Lambda^{k}(T) \subseteq \calP_{r+1}^{-}\Lambda^{k}(T) \subseteq \calP_{r+1}\Lambda^{k}(T)
 ,
 \\
 \mathring\calP_r\Lambda^{k}(T) \subseteq \mathring\calP_{r+1}^{-}\Lambda^{k}(T) \subseteq \mathring\calP_{r+1}\Lambda^{k}(T)
 ,
\end{gather*}
and, by the results in Subsection~\ref{subsec:traces}, 
that they are closed under taking traces: 
if $F \subseteq T$ is a subsimplex, then
\begin{gather*}
 \trace_{T,F} \calP_r\Lambda^{k}(T) = \calP_r\Lambda^{k}(F), 
 \quad 
 \trace_{T,F} \calP_r^{-}\Lambda^{k}(T) = \calP_r^{-}\Lambda^{k}(F).
\end{gather*}
We remark that our definitions \eqref{math:finiteelementspace:initial} and \eqref{math:finiteelementspace:whitney}
are equivalent to the respective definitions in \cite{AFW1}.

\subsection{Basis construction for $\calP_r\Lambda^{k}(T)$ and $\mathring\calP_r\Lambda^{k}(T)$}
\label{subsec:higherorderforms}

In this subsection, we study spanning sets and bases 
for the spaces $\calP_{r}\Lambda^{k}(T)$ and $\mathring\calP_{r}\Lambda^{k}(T)$.
We introduce the sets of barycentric differential forms 
\begin{align}
 \label{math:polycanonicalspanningset}
 \calS\calP_r\Lambda^{k}(T)
 &:=
 \left\{\;
  \lambda_T^{\alpha} \cartanlambda^{T}_{\sigma} 
  \suchthat* 
  \alpha \in A(r,n), 
  \;
  \sigma \in \Sigma(k,n)
 \;\right\},
 \\
 \label{math:polymathringspanningset}
 \calS\mathring\calP_r\Lambda^{k}(T)
 &:=
 \left\{\;
  \lambda_T^{\alpha}\cartanlambda^T_{\sigma}
  \suchthat* 
  \begin{array}{l}
   \alpha \in A(r,n), \; \sigma \in \Sigma(k,n),
   \\{}
   [\alpha]\cup[\sigma] = [0:n]
  \end{array}
 \;\right\}.
\end{align}
Furthermore, under the restriction that $r \geq 1$, 
we consider the sets of barycentric differential forms 
\begin{align}
 \label{math:polybetterbasis}
 \calB\calP_r\Lambda^{k}(T) 
 &:=
 \left\{\;
  \lambda_T^{\alpha} \cartanlambda^T_{\sigma}
  \suchthat* 
  \alpha \in A(r,n), \; \sigma \in \Sigma(k,n), \; \lfloor\alpha\rfloor \notin [\sigma]
 \;\right\},
 \\
 \label{math:polymathringbasis}
 \calB\mathring\calP_r\Lambda^{k}(T)
 &:=
 \left\{\;
  \lambda_T^{\alpha}\cartanlambda^T_{\sigma}
  \suchthat* 
  \begin{array}{l}
   \alpha \in A(r,n), \; \sigma \in \Sigma(k,n), \; \lfloor\alpha\rfloor \notin [\sigma], 
   \\
   {}[\alpha]\cup[\sigma] = [0:n]
  \end{array}
 \;\right\}.
\end{align}
We call $\calS\calP_{r}\Lambda^{k}(T)$ the \emph{canonical spanning set} of $\calP_{r}\Lambda^{k}(T)$,
and we call $\calS\mathring\calP_{r}\Lambda^{k}(T)$ the \emph{canonical spanning set} of $\mathring\calP_{r}\Lambda^{k}(T)$;
these names are justified below. 
Evidently, 
\begin{gather*}
 \calB\mathring\calP_r\Lambda^{k}(T)
 \subseteq  
 \calS\mathring\calP_r\Lambda^{k}(T),
 \quad 
 \calS\mathring\calP_r\Lambda^{k}(T)
 \subseteq  
 \calS\calP_r\Lambda^{k}(T),
 \\ 
 \calB\mathring\calP_r\Lambda^{k}(T)
 \subseteq  
 \calB\calP_r\Lambda^{k}(T),
 \quad 
 \calB\calP_r\Lambda^{k}(T)
 \subseteq  
 \calS\calP_r\Lambda^{k}(T). 
\end{gather*}
Suppose that $F \subseteq T$ is a subsimplex. 
It is clear from definitions that 
\begin{gather*}
 \trace_{T,F} \calS\calP_r\Lambda^{k}(T) = \calS\calP_r\Lambda^{k}(F), 
 \quad 
 \trace_{T,F} \calB\calP_r\Lambda^{k}(T) = \calB\calP_r\Lambda^{k}(F).
\end{gather*}
In fact, the trace of any member of $\calS\calP_r\Lambda^{k}(T)$ onto $F$ is either zero
or a member of $\calS\calP_r\Lambda^{k}(F)$, and any member of $\calS\calP_r\Lambda^{k}(F)$
has exactly one preimage in $\calS\calP_r\Lambda^{k}(T)$ under the trace.
More specifically, 
if $\lambda_T^{\alpha}\cartanlambda^T_{\sigma} \in \calS\calP_r\Lambda^{k}(T)$ 
with $[\alpha]\cup[\sigma] \subseteq [\imath(F,T)]$, then
\begin{align*}
 \trace_{T,F} \lambda_T^{\alpha} \cartanlambda^T_{\sigma}
 =
 \lambda_F^{ \widehat\alpha } \cartanlambda^F_{ \widehat\sigma }
 \in
 \calS\calP_r\Lambda^{k}(F),
\end{align*}
where $\widehat\alpha = \alpha \circ \imath(F,T)$
and $\widehat\sigma = \imath(F,T)^{\dagger} \circ \sigma$.
Note also that $\lfloor\alpha\rfloor \notin [\sigma]$ implies $\lfloor\widehat\alpha\rfloor \notin [\widehat\sigma]$.
In turn, if $\lambda^{\alpha}_F\cartanlambda^F_{\sigma} \in \calS\calP_r\Lambda^{k}(F)$,
then
\begin{align*}
 \lambda_T^{ \widetilde\alpha } \cartanlambda^T_{ \widetilde\sigma }
 \in
 \calS\calP_r\Lambda^{k}(T),
 \quad
 \trace_{T,F} 
 \lambda_T^{ \widetilde\alpha } \cartanlambda^T_{ \widetilde\sigma }
 =
 \lambda_F^{\alpha}\cartanlambda^F_{\sigma},
\end{align*}
where $\widetilde\alpha = \alpha \circ \imath(F,T)^{\dagger}$ over $[\imath(F,T)]$ and zero otherwise, 
and where $\widetilde\sigma = \imath(F,T) \circ \sigma$.
Note also that $\lfloor\alpha\rfloor \notin [\sigma]$ implies $\lfloor\widetilde\alpha\rfloor \notin [\widetilde\sigma]$.
\\

Note that $\calP_r\Lambda^{k}(T) = \linhull \calS\calP_r\Lambda^{k}(T)$ by definition,
which is the reason we call $\calS\calP_{r}\Lambda^{k}(T)$ the canonical spanning set of $\calP_r\Lambda^{k}(T)$. 
Whereas $\calS\calP_{r}\Lambda^{k}(T)$ is generally not linearly independent and hence is not a basis,
its subset $\calB\calP_r\Lambda^{k}(T)$ is a basis, as we now show.

\begin{theorem}
 Let $r \geq 1$. The set $\calB\calP_r\Lambda^{k}(T)$ is a basis of $\calP_{r}\Lambda^{k}(T)$.
\end{theorem}
\begin{proof}
 The claim holds in the case $k=0$, so let us assume that $k > 0$. 
 First we show that $\calB\calP_r\Lambda^{k}(T)$ spans $\calP_{r}\Lambda^{k}(T)$.
 For any $\alpha \in A(r,n)$ and $\sigma \in \Sigma(k,n)$ with $\lfloor\alpha\rfloor \in [\sigma]$ we find
 \begin{align*}
  \lambda^{\alpha}_{T} \cartanlambda^T_{\sigma}
  &=
  \eps({\lfloor\alpha\rfloor},\sigma-{\lfloor\alpha\rfloor})
  \lambda^{\alpha}_{T} \cartanlambda^T_{\lfloor\alpha\rfloor} \wedge \cartanlambda^T_{\sigma-{\lfloor\alpha\rfloor}}
  \\&=
  -
  \eps({\lfloor\alpha\rfloor},\sigma-{\lfloor\alpha\rfloor})
  \sum_{q \in [\sigma^c]} 
  \lambda^{\alpha}_{T} \cartanlambda^T_q \wedge \cartanlambda^T_{\sigma-{\lfloor\alpha\rfloor}}
  \\&=
  - 
  \eps({\lfloor\alpha\rfloor},\sigma-{\lfloor\alpha\rfloor})
  \sum_{q \in [\sigma^c]} 
  \eps(q,\sigma-{\lfloor\alpha\rfloor})
  \lambda^{\alpha}_{T} 
  \cartanlambda^T_{\sigma-{\lfloor\alpha\rfloor}+q}
.
\end{align*}
 The latter is a linear combination of members of $\calB\calP_r\Lambda^{k}(T)$.
 Hence $\calB\calP_r\Lambda^{k}(T)$ spans $\calP_{r}\Lambda^{k}(T)$. 
 It remains to show that $\calB\calP_r\Lambda^{k}(T)$ is linearly independent.
 Let $\omega \in \calP_{r}\Lambda^{k}(T)$.
Then there exist coefficients $\omega_{\alpha\sigma} \in \bbC$ such that
 \begin{align*}
  \omega
  =
  \sum_{ \substack{ \alpha \in A(r,n) } }
  \sum_{ \substack{ \sigma \in \Sigma(k,n) \\ \lfloor\alpha\rfloor \notin [\sigma] } }
  \omega_{\alpha\sigma} \lambda_T^{\alpha} \cartanlambda^T_{\sigma}
  .
 \end{align*}
 Suppose $\omega = 0$ while not all coefficients $\omega_{\alpha\sigma}$ vanish.
 Consider the constant $k$-forms
 \begin{align*}
  V_{\alpha}
  :=
  \sum_{ \substack{ \sigma \in \Sigma(k,n) \\ \lfloor\alpha\rfloor \notin [\sigma] } }
  \omega_{\alpha\sigma} \cartanlambda^T_{\sigma},
  \quad
  \alpha \in A(r,n).
 \end{align*}
 For every $\alpha \in A(r,n)$, we have $V_{\alpha} = 0$
 if and only if 
 for all $\sigma \in \Sigma(k,n)$ with $\lfloor\alpha\rfloor \notin [\sigma]$ we have $\omega_{\alpha\sigma} = 0$;
 that holds because those $\cartanlambda^T_{\sigma}$ with $\lfloor\alpha\rfloor \notin [\sigma]$ are linearly independent.
 Since we suppose that not all coefficients vanish, 
 there exists $\alpha \in A(r,n)$ with $V_{\alpha} \neq 0$.
 Letting $\calV_{\alpha}$ be the constant $k$-vector field dual to $V_{\alpha}$, 
\begin{align*}
  0
  &=
  \omega(\calV_{\alpha})
  =
  \sum_{\beta \in A(r,n)} \lambda_T^{\beta} V_{\beta}(\calV_{\alpha})
  =
  \lambda_T^{\alpha}
  +
  \sum_{ \substack{ \beta \in A(r,n) \\ \beta \neq \alpha } } 
  \lambda_T^{\beta} V_{\beta}(\calV_{\alpha}).
\end{align*}
 But this contradicts the linear independence of the set $\calB\calP_{r}\Lambda^{0}(T)$. 
 Hence all coefficients must vanish.
 This shows linear independence, completing the proof. 
\springerqed\end{proof}

Next we show 
that the subset $\calS\mathring\calP_r\Lambda^{k}(T) \subseteq \calS\calP_r\Lambda^{k}(T)$
spans the subspace $\mathring\calP_r\Lambda^{k}(T) \subseteq \calP_r\Lambda^{k}(T)$
and 
that the subset $\calB\mathring\calP_r\Lambda^{k}(T) \subseteq \calB\calP_r\Lambda^{k}(T)$
is a basis for the same space.

\begin{theorem}
 Let $r \geq 1$. 
 The set $\calB\mathring\calP_r\Lambda^{k}(T)$ is a basis for $\mathring\calP_{r}\Lambda^{k}(T)$,
 and $\calS\mathring\calP_r\Lambda^{k}(T)$ is a spanning set for that space.
\end{theorem}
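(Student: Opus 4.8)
The plan is to build everything on the preceding lemma—that $\calB\calP_r\Lambda^{k}(T)$ is a basis of $\calP_r\Lambda^{k}(T)$ for $r\geq 1$—combined with the explicit trace formulas for basis elements recorded just above the theorem. There are three things to check: that every element of $\calS\mathring\calP_r\Lambda^{k}(T)$ (a fortiori of $\calB\mathring\calP_r\Lambda^{k}(T)$) genuinely lies in $\mathring\calP_r\Lambda^{k}(T)$; that $\calB\mathring\calP_r\Lambda^{k}(T)$ already spans that subspace; and that it is linearly independent. The last point is immediate, since $\calB\mathring\calP_r\Lambda^{k}(T)\subseteq\calB\calP_r\Lambda^{k}(T)$ and the latter is linearly independent.

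For the inclusion I would note that any proper subsimplex $F\subsetneq T$ satisfies $[\imath(F,T)]\subsetneq[0:n]$. Hence if $\lambda_T^{\alpha}\cartanlambda^{T}_{\sigma}$ has $[\alpha]\cup[\sigma]=[0:n]$, then $[\alpha]\cup[\sigma]\nsubseteq[\imath(F,T)]$, so the trace formula forces $\trace_{T,F}(\lambda_T^{\alpha}\cartanlambda^{T}_{\sigma})=0$ for every such $F$. This shows $\calS\mathring\calP_r\Lambda^{k}(T)\subseteq\mathring\calP_r\Lambda^{k}(T)$, and in particular $\calB\mathring\calP_r\Lambda^{k}(T)\subseteq\mathring\calP_r\Lambda^{k}(T)$.

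The heart of the argument is the spanning direction. Given $\omega\in\mathring\calP_r\Lambda^{k}(T)$, I would expand it in the basis as $\omega=\sum c_{\alpha\sigma}\lambda_T^{\alpha}\cartanlambda^{T}_{\sigma}$, the sum over $\calB\calP_r\Lambda^{k}(T)$, and aim to prove $c_{\alpha\sigma}=0$ whenever $[\alpha]\cup[\sigma]\neq[0:n]$. Fix such an index pair and let $F$ be the subsimplex on the vertex set $[\alpha]\cup[\sigma]$, which is proper because $[\alpha]\cup[\sigma]\subsetneq[0:n]$. Applying $\trace_{T,F}$ and using that the trace annihilates exactly the basis elements with $[\beta]\cup[\tau]\nsubseteq[\imath(F,T)]$ while sending the remaining ones bijectively onto $\calB\calP_r\Lambda^{k}(F)$, I obtain $0=\trace_{T,F}\omega=\sum_{[\beta]\cup[\tau]\subseteq[\imath(F,T)]} c_{\beta\tau}\,\lambda_F^{\hat\beta}\cartanlambda^{F}_{\hat\tau}$. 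Since $\calB\calP_r\Lambda^{k}(F)$ is itself a basis of $\calP_r\Lambda^{k}(F)$ (the previous lemma applied to $F$, using $r\geq 1$), these traces are linearly independent, so every coefficient occurring—$c_{\alpha\sigma}$ among them—must vanish. Thus only coefficients with $[\alpha]\cup[\sigma]=[0:n]$ survive, whence $\omega\in\linhull\calB\mathring\calP_r\Lambda^{k}(T)$.

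Combining the three steps, $\calB\mathring\calP_r\Lambda^{k}(T)$ is a basis of $\mathring\calP_r\Lambda^{k}(T)$; and since $\calB\mathring\calP_r\Lambda^{k}(T)\subseteq\calS\mathring\calP_r\Lambda^{k}(T)\subseteq\mathring\calP_r\Lambda^{k}(T)$, the set $\calS\mathring\calP_r\Lambda^{k}(T)$ spans the space as well. The one point that deserves care—and the only place I expect a genuine subtlety—is the claim that $\trace_{T,F}(\lambda_T^{\alpha}\cartanlambda^{T}_{\sigma})$ is \emph{nonzero} for the chosen $F$: this hinges on the defining constraint $\lfloor\alpha\rfloor\notin[\sigma]$ of $\calB\calP_r\Lambda^{k}(T)$, which guarantees $|[\alpha]\cup[\sigma]|\geq k+1$ and hence $\dim F\geq k$, so that $\cartanlambda^{F}_{\hat\sigma}$ does not vanish. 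This is precisely the image/preimage correspondence established immediately before the theorem, so the verification amounts to bookkeeping rather than a real obstacle.
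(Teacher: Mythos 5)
Your proposal is correct and follows essentially the same route as the paper: expand $\omega \in \mathring\calP_r\Lambda^{k}(T)$ in the basis $\calB\calP_r\Lambda^{k}(T)$, apply $\trace_{T,F}$ for proper subsimplices $F$, and use that the surviving traces are distinct elements of the linearly independent set $\calB\calP_r\Lambda^{k}(F)$ to force all coefficients with $[\alpha]\cup[\sigma] \neq [0:n]$ to vanish. Your explicit verification of the inclusion $\calS\mathring\calP_r\Lambda^{k}(T) \subseteq \mathring\calP_r\Lambda^{k}(T)$ and of the dimension count $\dim F \geq k$ (via $\lfloor\alpha\rfloor \notin [\sigma]$) simply makes precise what the paper delegates to the trace-formula discussion preceding the theorem.
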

\begin{proof}
 Let $\omega \in \mathring\calP_r\Lambda^{k}(T)$.
 Then $\omega \in \calP_r\Lambda^{k}(T)$,
 and thus there exist unique coefficients $\omega_{\alpha\sigma} \in \bbC$ such that
 \begin{align*}
  \omega
  =
  \sum_{ \substack{ \alpha \in A(r,n) } }
  \sum_{ \substack{ \sigma \in \Sigma(k,n) \\ \lfloor\alpha\rfloor \notin [\sigma] } }
  \omega_{\alpha\sigma} \lambda_T^{\alpha}\cartanlambda^T_{\sigma}.
 \end{align*}
 Suppose that $F$ is a lower-dimensional subsimplex of $T$.
 Since $\omega \in \mathring\calP_{r}\Lambda^{k}(T)$, we have $0 = \trace_{T,F} \omega$. 
 So 
\begin{align*}
  0
  &=
\sum_{ \substack{ \alpha \in A(r,n) \\ \sigma \in \Sigma(k,n) \\ \lfloor\alpha\rfloor \notin [\sigma] } }
  \omega_{\alpha\sigma} \trace_{T,F} \lambda_T^{\alpha}\cartanlambda^T_{\sigma}
= 
\sum_{ \substack{ \alpha \in A(r,n) \\ \sigma \in \Sigma(k,n) \\ \lfloor\alpha\rfloor \notin [\sigma] \\ [\alpha]\cup[\sigma]\subseteq[\imath(F,T)] } }
  \omega_{\alpha\sigma} \lambda^{\alpha \circ \imath(F,T)}_F \cartanlambda^F_{\imath(F,T)^{\dagger} \circ \sigma}
  .
 \end{align*}
 Since the last sum is written in terms of the basis $\calB\calP_{r}\Lambda^{k}(F)$,
 we must have $\omega_{\alpha\sigma} = 0$ for all $[\alpha]\cup[\sigma] \subseteq [\imath(F,T)]$. 
 Since we assumed $F$ to be an arbitrary proper subsimplex of $T$,
 we get that $\omega_{\alpha\sigma} = 0$
 when $[\alpha]\cup[\sigma] \neq [0:n]$. 
We conclude that $\calB\mathring\calP_r\Lambda^{k}(T)$ is a spanning set
 of $\mathring\calP_r\Lambda^{k}(T)$.
 It is linearly independent too, being a subset of $\calB\calP_r\Lambda^{k}(T)$.
 Hence $\calB\mathring\calP_r\Lambda^{k}(T)$ is a basis, as claimed,
 and $\calS\mathring\calP_r\Lambda^{k}(T)$ being a spanning set is a trivial consequence. 
\springerqed\end{proof}

\begin{remark}
 We compare our construction with the previous results in the literature. 
 Our basis $\calB\mathring\calP_{r}\Lambda^{k}(T)$ has been displayed explicitly in previous works, 
 albeit in a different form. 
 To illustrate that, we note that $\calB\mathring\calP_{r}\Lambda^{k}(T)$ can also be written as 
 \begin{align*}
  \calB\mathring\calP_r\Lambda^{k}(T)
  &
  =
  \left\{\;
   \lambda_T^{\alpha}\cartanlambda^T_{\sigma}
   \suchthat* 
   \begin{array}{l}
    \alpha \in A(r,n), \; \sigma \in \Sigma(k,n), 
    \\
    \lfloor\alpha\rfloor \notin [\sigma], \; [\alpha]\cup[\sigma] = [0:n]
   \end{array}
  \;\right\}
  \\&
  =
  \left\{\;
   \lambda_T^{\alpha}\cartanlambda^T_{\sigma}
   \suchthat* 
   \begin{array}{l}
    \alpha \in A(r,n), \; \sigma \in \Sigma(k,n), 
    \\
    \lfloor\alpha\rfloor \in [0:n]\setminus [\sigma], \; [\alpha]\cup[\sigma] = [0:n]
   \end{array}
  \;\right\}
  \\&
  =
  \left\{\;
   \lambda_T^{\alpha}\cartanlambda^T_{\sigma}
   \suchthat* 
   \begin{array}{l}
    \alpha \in A(r,n), \; \sigma \in \Sigma(k,n), 
    \\
    \lfloor\alpha\rfloor = \min([0:n]\setminus [\sigma]), \; [\alpha]\cup[\sigma] = [0:n]
   \end{array}
  \;\right\}
  \\&
  =
  \left\{\;
   \lambda_T^{\alpha}\cartanlambda^T_{\sigma}
   \suchthat* 
   \begin{array}{l}
    \alpha \in A(r,n), \; \sigma \in \Sigma(k,n), 
    \\
    \lfloor\alpha\rfloor \geq \min([0:n]\setminus [\sigma]), \; [\alpha]\cup[\sigma] = [0:n]
   \end{array}
  \;\right\}.
 \end{align*}
Due to the identities above, we now see that $\calB\mathring\calP_{r}\Lambda^{k}(T)$
 agrees with the basis description in Theorem~6.1 of \cite{AFWgeodecomp}. 
 That same basis of $\mathring\calP\Lambda^{k}(T)$ is also used implicitly in Theorem~4.22 of \cite{AFW1}.
 However, our basis $\calB\calP_{r}\Lambda^{k}(T)$ of $\calP_{r}\Lambda^{k}(T)$ is not explicitly described there.  
\end{remark}

\begin{remark}
    \label{remark:vectoranalysis}
Our basis for $\calP_{r}\Lambda^{k}(T)$ has apparently not been mentioned in the literature before,
    so we give some examples in the language of vector analysis. 
Over a triangle, a basis for the Brezzi-Douglas-Marini space of polynomial degree $r$ is 
    \begin{align}
        \left\{\;
            \lambda_T^{\alpha} \nablalambda^T_{p}
            \suchthat* 
            \alpha \in A(r,2), \; p \in \{0,1,2\}, \; \lfloor\alpha\rfloor \neq p
        \;\right\}
        .
    \end{align}
    Over a tetrahedron, 
    we have a basis for the curl-conforming N\'ed\'elec elements 
    of the second kind of polynomial degree $r$
    \begin{align}
        \left\{\;
            \lambda_T^{\alpha} \nablalambda^T_{p}
            \suchthat* 
            \alpha \in A(r,3), \; p \in \{0,1,2,3\}, \; \lfloor\alpha\rfloor \neq p
        \;\right\}
        ,
    \end{align}
    and we have a basis for the divergence-conforming N\'ed\'elec elements 
    of the second kind of polynomial degree $r$
    \begin{align}
        \left\{\;
            \lambda_T^{\alpha} \nablalambda^T_{p} \times \nablalambda^T_{q}
            \suchthat* 
            \alpha \in A(r,3), \; p,q \in \{0,1,2,3\}, \; p < q, \; \lfloor\alpha\rfloor \notin \{p,q\}
        \;\right\}
        .
    \end{align}
    These bases are already geometrically decomposed: for each member of these bases, 
    the indices in the parameters $[\alpha]$, $p$, and $q$ completely determine which 
    subsimplex the corresponding member is associated with.
    For example, $\lambda_{0}\nablalambda_{1}^{T}$ and $\lambda_{1}\nablalambda_{0}^{T}$ are associated with the edge of $T$ 
    that contains the zeroth and the first vertices,
    and $\lambda_{0}\lambda_{1}\nablalambda_{2}^{T}\times\nablalambda_{3}^{T}$
    is associated with the entire tetrahedron $T$.
    Tables~\ref{table:tableofbasisforms_21}~--~\ref{table:tableofbasisforms_32}
    illustrate these bases for vector-valued finite elements for low polynomial degrees. 
\end{remark}

\begin{table}[t]
  \centering
  \begin{tabular}{r|l}
  \label{tableofbasisforms}
  $r=1$
  &
  $\lambda_0\{\nablalambda_1,\nablalambda_2\}$,
  $\lambda_1\{\nablalambda_0,\nablalambda_2\}$,
  $\lambda_2\{\nablalambda_0,\nablalambda_1\}$
  \\[0.25cm]
  $r=2$
  &
  $\lambda_0 \{\lambda_0,\lambda_1,\lambda_2\} \{\nablalambda_1,\nablalambda_2\}$,
  $\lambda_1 \{\lambda_1,\lambda_2\} \{\nablalambda_0,\nablalambda_2\}$,
  $\lambda_2^{2} \{\nablalambda_0,\nablalambda_1\}$
  \\[0.25cm]
  $r=3$
  &
  $\lambda_0 \{\lambda_0,\lambda_1,\lambda_2\}^{2} \{\nablalambda_1,\nablalambda_2\}$,
  $\lambda_1 \{\lambda_1,\lambda_2\}^{2} \{\nablalambda_0,\nablalambda_2\}$,
  $\lambda_2^{3} \{\nablalambda_0,\nablalambda_1\}$
\end{tabular}
  \caption{Bases for the Brezzi-Douglas-Marini space on a $2$-simplex $T$ for low polynomial degree $r$ in terms of barycentric coordinates.}
  \label{table:tableofbasisforms_21}
\end{table}

\begin{table}[t]
\centering
  \begin{tabular}{r|l}
  $r=1$
  &
  $\lambda_0\{\nablalambda_1,\nablalambda_2,\nablalambda_3\}$,
  $\lambda_1\{\nablalambda_0,\nablalambda_2,\nablalambda_3\}$,
  $\lambda_2\{\nablalambda_0,\nablalambda_1,\nablalambda_3\}$,
  \\ & 
  $\lambda_3\{\nablalambda_0,\nablalambda_1,\nablalambda_2\}$
  \\[0.25cm]
  $r=2$
  &
  $\lambda_0 \{\lambda_0,\lambda_1,\lambda_2,\lambda_3\} \{\nablalambda_1,\nablalambda_2,\nablalambda_3\}$,
  $\lambda_1 \{\lambda_1,\lambda_2,\lambda_3\} \{\nablalambda_0,\nablalambda_2,\nablalambda_3\}$,
  \\ & 
  $\lambda_2 \{\lambda_2,\lambda_3\} \{\nablalambda_0,\nablalambda_1,\nablalambda_3\}$,
  $\lambda_3^{2} \{\nablalambda_0,\nablalambda_1,\nablalambda_2\}$,
  \\[0.25cm]
  $r=3$
  &
  $\lambda_0 \{\lambda_0,\lambda_1,\lambda_2,\lambda_3\}^{2} \{\nablalambda_1,\nablalambda_2,\nablalambda_3\}$,
  $\lambda_1 \{\lambda_1,\lambda_2,\lambda_3\}^{2} \{\nablalambda_0,\nablalambda_2,\nablalambda_3\}$,
  \\ & 
  $\lambda_2 \{\lambda_2,\lambda_3\}^{2} \{\nablalambda_0,\nablalambda_1,\nablalambda_3\}$,
  $\lambda_3^{3} \{\nablalambda_0,\nablalambda_1,\nablalambda_2\}$,
\end{tabular}
  \caption{Bases for the curl-conforming N\'ed\'elec space of the second kind on a $3$-simplex $T$ for low polynomial degree $r$ in terms of barycentric coordinates.}
  \label{table:tableofbasisforms_31}
\end{table}

\begin{table}[t]
\centering
  \begin{tabular}{r|l}
  $r=1$
  &
  $\lambda_0
  \{\nablalambda_1\times\nablalambda_2, \nablalambda_1\times\nablalambda_3, \nablalambda_2\times\nablalambda_3\}$,
  \\ & 
  $\lambda_1
  \{\nablalambda_0\times\nablalambda_2, \nablalambda_0\times\nablalambda_3, \nablalambda_2\times\nablalambda_3\}$,
  \\ & 
  $\lambda_2
  \{\nablalambda_0\times\nablalambda_1, \nablalambda_0\times\nablalambda_3, \nablalambda_1\times\nablalambda_3\}$,
  \\ & 
  $\lambda_3
  \{\nablalambda_0\times\nablalambda_1, \nablalambda_0\times\nablalambda_2, \nablalambda_1\times\nablalambda_2\}$
  \\[0.25cm]
  $r=2$
  &
  $\lambda_0 \{\lambda_0,\lambda_1,\lambda_2,\lambda_3\}
  \{\nablalambda_1\times\nablalambda_2, \nablalambda_1\times\nablalambda_3, \nablalambda_2\times\nablalambda_3\}$,
  \\ & 
  $\lambda_1 \{\lambda_1,\lambda_2,\lambda_3\}
  \{\nablalambda_0\times\nablalambda_2, \nablalambda_0\times\nablalambda_3, \nablalambda_2\times\nablalambda_3\}$,
  \\ & 
  $\lambda_2 \{\lambda_2,\lambda_3\}
  \{\nablalambda_0\times\nablalambda_1, \nablalambda_0\times\nablalambda_3, \nablalambda_1\times\nablalambda_3\}$,
  \\ & 
  $\lambda_3^{2}
  \{\nablalambda_0\times\nablalambda_1, \nablalambda_0\times\nablalambda_2, \nablalambda_1\times\nablalambda_2\}$,
  \\[0.25cm]
  $r=3$
  &
  $\lambda_0 \{\lambda_0,\lambda_1,\lambda_2,\lambda_3\}^{2}
  \{\nablalambda_1\times\nablalambda_2, \nablalambda_1\times\nablalambda_3, \nablalambda_2\times\nablalambda_3\}$,
  \\ & 
  $\lambda_1 \{\lambda_1,\lambda_2,\lambda_3\}^{2}
  \{\nablalambda_0\times\nablalambda_2, \nablalambda_0\times\nablalambda_3, \nablalambda_2\times\nablalambda_3\}$,
  \\ & 
  $\lambda_2 \{\lambda_2,\lambda_3\}^{2}
  \{\nablalambda_0\times\nablalambda_1, \nablalambda_0\times\nablalambda_3, \nablalambda_1\times\nablalambda_3\}$,
  \\ & 
  $\lambda_3^{3} 
  \{\nablalambda_0\times\nablalambda_1, \nablalambda_0\times\nablalambda_2, \nablalambda_1\times\nablalambda_2\}$,
\end{tabular}
  \caption{Bases for the divergence-conforming N\'ed\'elec space of the second kind on a $3$-simplex $T$ for low polynomial degree $r$ in terms of barycentric coordinates.}
  \label{table:tableofbasisforms_32}
\end{table}

\subsection{Basis construction for $\calP^-_r\Lambda^{k}(T)$ and $\mathring\calP^-_r\Lambda^{k}(T)$}
\label{subsec:higherorderwhitneyforms}

This subsection follows a similar path as the previous one.  
We study spanning sets and bases 
for the spaces $\calP_{r}^{-}\Lambda^{k}(T)$ and $\mathring\calP_{r}^{-}\Lambda^{k}(T)$.
Under the restriction that $r \geq 1$, 
we introduce the sets of barycentric differential forms 
\begin{align}
 \label{math:whitneyspanningset}
 \calS\calP^{-}_r\Lambda^{k}(T)
 &:=
 \left\{\;
  \lambda_{T}^{\alpha} \phi^{T}_{\rho}
  \suchthat* 
  \alpha \in A(r-1,n), \; \rho \in \Sigma_{0}(k,n)
 \;\right\},
 \\
 \label{math:whitneymathringspanningset}
 \calS\mathring\calP^{-}_r\Lambda^{k}(T)
 &:=
 \left\{\; 
  \lambda_T^{\alpha}\phi_{\rho}^{T}
  \suchthat* 
  \begin{array}{l}
   \alpha \in A(r-1,n), \; \rho \in \Sigma_{0}(k,n),
   \\{}
   [\alpha]\cup[\rho]=[0:n]
  \end{array}
 \;\right\},
\end{align}
and their subsets 
\begin{align}
 \label{math:whitneybasis}
 \calB\calP^{-}_r\Lambda^{k}(T)
 &:=
 \left\{\; 
  \lambda_T^{\alpha}\phi_{\rho}^{T}
  \suchthat* 
   \begin{array}{l}
    \alpha \in A(r-1,n), \; \rho \in \Sigma_{0}(k,n),
    \\
    \lfloor\alpha\rfloor \geq \lfloor\rho\rfloor 
   \end{array} 
 \;\right\},
 \\
 \label{math:whitneymathringbasis}
 \calB\mathring\calP^{-}_r\Lambda^{k}(T)
 &:=
 \left\{\;
  \lambda_T^{\alpha}\phi_{\rho}^{T}
  \suchthat* 
   \begin{array}{l}
    \alpha \in A(r-1,n), \; \rho \in \Sigma_{0}(k,n),
    \\
    \lfloor\alpha\rfloor \geq \lfloor\rho\rfloor, \; [\alpha]\cup[\rho]=[0:n]
   \end{array} 
 \;\right\}.
\end{align}
We call $\calS\calP_{r}^{-}\Lambda^{k}(T)$ the \emph{canonical spanning set} of $\calP_{r}^{-}\Lambda^{k}(T)$,
and we call $\calS\mathring\calP_{r}^{-}\Lambda^{k}(T)$ the \emph{canonical spanning set} of $\mathring\calP_{r}^{-}\Lambda^{k}(T)$;
as before, these names will be justified shortly. 
It is evident that \begin{gather*}
 \calB\mathring\calP_r^{-}\Lambda^{k}(T)
 \subseteq  
 \calS\mathring\calP_r^{-}\Lambda^{k}(T),
 \quad 
 \calS\mathring\calP_r^{-}\Lambda^{k}(T)
 \subseteq  
 \calS\calP_r^{-}\Lambda^{k}(T),
 \\ 
 \calB\mathring\calP_r^{-}\Lambda^{k}(T)
 \subseteq  
 \calB\calP_r^{-}\Lambda^{k}(T),
 \quad 
 \calB\calP_r^{-}\Lambda^{k}(T)
 \subseteq  
 \calS\calP_r^{-}\Lambda^{k}(T). 
\end{gather*}
Suppose that $F \subseteq T$ is a subsimplex. From definitions it is clear that 
\begin{gather*}
 \trace_{T,F} \calS\calP_r^{-}\Lambda^{k}(T) = \calS\calP_r^{-}\Lambda^{k}(F), 
 \quad 
 \trace_{T,F} \calB\calP_r^{-}\Lambda^{k}(T) = \calB\calP_r^{-}\Lambda^{k}(F).
\end{gather*}
In fact, the trace of any member of $\calS\calP_r^{-}\Lambda^{k}(T)$ onto $F$ is either zero
or a member of $\calS\calP_r^{-}\Lambda^{k}(F)$, and any member of $\calS\calP_r^{-}\Lambda^{k}(F)$
has exactly one preimage in $\calS\calP_r^{-}\Lambda^{k}(T)$ under the trace.
Again, we make this more specific. 
Whenever $\lambda_T^{\alpha}\phi^T_{\rho} \in \calS\calP^{-}_r\Lambda^{k}(T)$ with $[\alpha]\cup[\rho] \subseteq [\imath(F,T)]$,
then
\begin{align*}
 \trace_{T,F} \lambda_T^{\alpha}\phi^T_{\rho}
 =
 \lambda_F^{ \widehat\alpha } \phi^F_{ \widehat\rho }
 \in
 \calS\calP^{-}_r\Lambda^{k}(F),
\end{align*}
where $\widehat\alpha = \alpha \circ \imath(F,T)$
and $\widehat\rho = \imath(F,T)^{\dagger} \circ \rho$.
Note also that $\lfloor\alpha\rfloor \geq \lfloor\rho\rfloor$ implies $\lfloor\widehat\alpha\rfloor \geq \lfloor\widehat\rho\rfloor$,
by our assumption on the orders of vertices.
In turn, if $\lambda_F^{\alpha}\phi^F_{\rho} \in \calS\calP^{-}_r\Lambda^{k}(F)$,
then
\begin{align*}
 \lambda_T^{ \widetilde\alpha } \phi^T_{ \widetilde\rho }
 \in
 \calS\calP^{-}_r\Lambda^{k}(T),
 \quad
 \trace_{T,F}
 \lambda_T^{ \widetilde\alpha } \phi^T_{ \widetilde\rho }
 =
 \lambda^{\alpha}_F \phi^F_{\rho},
\end{align*}
where $\widetilde\alpha = \alpha \circ \imath(F,T)^{\dagger}$ over $[\imath(F,T)]$ and zero otherwise, 
and where $\widetilde\rho= \imath(F,T) \circ \rho$.
Note also that $\lfloor\alpha\rfloor \geq \lfloor\rho\rfloor$ implies $\lfloor\widetilde\alpha\rfloor \geq \lfloor\widetilde\rho\rfloor$.
\\

Similar as above, $\calP_r^{-}\Lambda^{k}(T) = \linhull \calS\calP_r^{-}\Lambda^{k}(T)$ by definitions,
so $\calS\calP_{r}\Lambda^{k}(T)$ is aptly called the canonical spanning set of the space $\calP_r\Lambda^{k}(T)$
of higher order Whitney forms. 
But $\calS\calP_{r}^{-}\Lambda^{k}(T)$ is generally not a basis for that space. 
Analogously to the previous subsection, 
we show 
that its subset $\calB\calP_r^{-}\Lambda^{k}(T)$ is a basis,
that $\calS\mathring\calP^{-}_r\Lambda^{k}(T)$ is a spanning set, 
and that $\calB\mathring\calP^{-}_r\Lambda^{k}(T)$ is a basis of $\mathring\calP_r^{-}\Lambda^{k}(T)$.

\begin{theorem}
  Let $r \geq 1$. The set $\calB\calP^{-}_r\Lambda^{k}(T)$ is a basis of $\calP^{-}_{r}\Lambda^{k}(T)$.
\end{theorem}
\begin{proof}
We first show that $\calB\calP^{-}_r\Lambda^{k}(T)$
  is a spanning set of $\calP_r^{-}\Lambda^{k}(T)$.
  In the case $r=1$, we have $\calB\calP^{-}_r\Lambda^{k}(T) = \calS\calP^{-}_r\Lambda^{k}(T)$ by definitions,
  so it remains to consider the case $r \geq 2$. 
  Let $\alpha \in A(r-1,n)$ and $\rho \in \Sigma_{0}(k,n)$,
  and write $p := \lfloor\alpha\rfloor$.
  There exists $\beta \in A(r-2,n)$ with $\lambda_T^\alpha = \lambda_T^\beta \lambda^T_{p}$.
  If $p \geq \lfloor\rho\rfloor$, then $\lambda_T^{\alpha} \phi^T_{\rho} \in \calB\calP^{-}_{r}\Lambda^{k}(T)$ by definition.
  If instead $p < \lfloor\rho\rfloor$,
  then Lemma~\ref{prop:whitneycancellationlemma} shows 
  \begin{align*}
  \lambda_T^{\alpha} \phi^T_{\rho}
  =
  \lambda_T^{\beta} \lambda^T_{p} \phi^T_{\rho}
  &
  =
  \lambda_T^{\beta} \eps( p, (\rho+p) - p ) \lambda^T_{p} \phi^T_{ (\rho+p) - p }
\\&
  =
-
  \sum_{ q \in [\rho] } 
  \eps( q, (\rho+p) - q ) 
  \lambda_T^{\beta}
  \lambda^T_{q} \phi^T_{ (\rho+p) - q }
.
  \end{align*} 
  Notice that the last sum is a linear combination of terms in $\calB\calP^{-}_r\Lambda^{k}(T)$
  because $p = \lfloor\rho+p\rfloor \leq \lfloor\beta+q\rfloor$.
  Hence all members of $\calS\calP^{-}_r\Lambda^{k}(T)$ are linear combinations 
  of members of $\calB\calP^{-}_r\Lambda^{k}(T)$,
  that is, $\calB\calP^{-}_r\Lambda^{k}(T)$ spans $\calP_r^{-}\Lambda^{k}(T)$.

  It remains to show that $\calB\calP^{-}_r\Lambda^{k}(T)$ is linearly independent.
  We perform an induction over the dimension of $T$.
  In the base case $n=k$, 
  the set $\calB\calP^{-}_r\Lambda^{k}(T)$ is linearly independent because it consists precisely of the terms $\lambda^\alpha \phi_T$ with $\alpha \in A(r-1,n)$. 
For the induction step, assume the statement is true for simplices of dimension up to $n-1$.
  Let $\omega \in \calP_{r}^{-}\Lambda^{k}(T)$ be written as 
  \begin{align*}
    \omega 
    =
    \sum_{ \substack{ \alpha \in A(r-1,n) } } 
    \sum_{ \substack{ \rho \in \Sigma_{0}(k,n) \\ \lfloor\alpha\rfloor \geq \lfloor\rho\rfloor } } 
    \omega_{\alpha\rho} \lambda_T^{\alpha} \phi^T_{\rho}
    ,
    \quad 
    \omega_{\alpha\rho} \in \bbC
    .
  \end{align*}
We need to show that all coefficients in that expansion vanish if $\omega$ vanishes.
  Assume that $\omega = 0$. Let $F$ be the subsimplex of $T$ of dimension $n-1$ not containing 
  the zeroth vertex of $T$.
  Then 
  \begin{align*}
    \trace_{T,F} \omega 
    = 
    \sum_{ \substack{ \alpha \in A(r-1,n) \\
    \rho \in \Sigma_{0}(k,n) \\
    \lfloor\alpha\rfloor \geq \lfloor\rho\rfloor } }
    \omega_{\alpha\rho} \trace_{T,F} \lambda_T^{\alpha}\phi^T_{\rho}
    =
    \sum_{ \substack{ \alpha \in A(r-1,n) \\
    \rho \in \Sigma_{0}(k,n) \\
    \lfloor\alpha\rfloor \geq \lfloor\rho\rfloor \\
    [\alpha]\cup[\rho] \subseteq [\imath(F,T)] } }
    \omega_{\alpha\rho} \lambda^{ \alpha \circ \imath(F,T) }_F \phi^F_{ \imath(F,T)^\dagger \circ \rho }
    .
   \end{align*}
   By the induction assumption, 
   this expresses $\trace_{T,F} \omega$ in terms of a basis of $\calP_r^{-}\Lambda^{k}(F)$.
   Recall that $0 = \trace_{T,F} \omega$.
   Hence $\omega_{\alpha\rho} = 0$ whenever $[\alpha]\cup[\rho] \subseteq [\imath(F,T)]$.
   Since $[\imath(F,T)] = [1:n]$ and $\lfloor\alpha\rfloor \geq \lfloor\rho\rfloor$,
   we see that $\omega_{\alpha\rho} = 0$ whenever $\lfloor\rho\rfloor \neq 0$.
   Thus
\begin{align*}
      \omega 
      =
      \sum_{ \substack{ \alpha \in A(r-1,n) } } 
      \sum_{ \substack{ \rho \in \Sigma_{0}(k,n) \\ \lfloor\rho\rfloor = 0 } } 
      \omega_{\alpha\rho} \lambda_T^{\alpha} \phi^T_{\rho}
      .
   \end{align*}
    Using the definition of Whitney forms, we write $\omega = \omega_{(0)} + \omega_{(+)}$, where 
    \begin{align*}
      \omega_{(0)} 
      &:=
      \sum_{ \substack{ \alpha \in A(r-1,n) } } 
      \sum_{ \substack{ \rho \in \Sigma_{0}(k,n) \\ \lfloor\rho\rfloor = 0 } } 
      \omega_{\alpha\rho} 
      \lambda_T^{\alpha} \lambda_{0}^{T}
      \cartanlambda^T_{\rho - 0}
      ,
      \\
      \omega_{(+)} 
      &:=
      \sum_{ \substack{ \alpha \in A(r-1,n) } } 
      \sum_{ \substack{ \rho \in \Sigma_{0}(k,n) \\ \lfloor\rho\rfloor = 0 } } 
      \sum_{ \substack{ p \in [\rho] \\ p > 0 } } 
      \omega_{\alpha\rho}
      \eps(p,\rho-p)
      \lambda_T^{\alpha} \lambda_{p}^{T} 
      \cartanlambda^T_{\rho - p}
    \end{align*}
  Notice that $\omega_{(0)}$ is expressed as a linear combination of terms in $\calB\calP_{r}\Lambda^{k}(T)$,
  which is a basis. 
  We use a recursive argument to prove that all $\omega_{\alpha\rho}$ vanish. 

 For the recursion step, let us assume that there exists $s \geq 0$ 
 such that $\omega_{\alpha\rho} = 0$ whenever $\alpha(0) > s$.
 Since the barycentric polynomials $\lambda_T^{\alpha}\lambda_{0}^{T}$ with $\alpha(0) = s$
 in the definition of $\omega_{(0)}$ always have a higher exponent in index $0$ 
 than any of the barycentric polynomials $\lambda_T^{\alpha}\lambda_{p}^{T}$ in the definition of $\omega_{(+)}$, 
 the identity $\omega_{(0)} + \omega_{(+)} = 0$ shows $\omega_{\alpha\rho} = 0$ also when $\alpha(0) = s$.
 We repeat this argument, beginning with the trivial choice $s = r-1$ and decreasing $s$ to zero, 
 and find that $\omega_{\alpha\rho} = 0$ for all coefficients. 
 Thus $\calB\mathring\calP^{-}_r\Lambda^{k}(T)$ is linearly independent.
 
 This closes the induction step, and the proof is complete.
\springerqed\end{proof}

\begin{theorem}
  Let $r \geq 1$. 
  The set $\calB\mathring\calP^{-}_r\Lambda^{k}(T)$ is a basis for $\mathring\calP^{-}_{r}\Lambda^{k}(T)$,
  and $\calS\mathring\calP^{-}_r\Lambda^{k}(T)$ is a spanning set for that space.
 \end{theorem}
 \begin{proof}
  Let $\omega \in \mathring\calP^{-}_r\Lambda^{k}(T)$.
  Then $\omega \in \calP^{-}_r\Lambda^{k}(T)$,
  and thus there exist unique coefficients $\omega_{\alpha\rho} \in \bbC$ such that
  \begin{align*}
   \omega 
   =
   \sum_{ \substack{ \alpha \in A(r-1,n) } } 
   \sum_{ \substack{ \rho \in \Sigma_{0}(k,n) \\ \lfloor\alpha\rfloor \geq \lfloor\rho\rfloor } } 
   \omega_{\alpha\rho} \lambda_T^{\alpha} \phi^T_{\rho}
   .
  \end{align*} Suppose that $F$ is a lower-dimensional subsimplex of $T$.
  Since $\omega \in \mathring\calP^{-}_{r}\Lambda^{k}(T)$, we have $0 = \trace_{T,F} \omega$. 
  So 
  \begin{align*}
   0
   &=
   \sum_{ \substack{ \alpha \in A(r-1,n) \\ \rho \in \Sigma_{0}(k,n) \\ \lfloor\alpha\rfloor \geq \lfloor\rho\rfloor } }
   \omega_{\alpha\rho} \trace_{T,F} \lambda_T^{\alpha} \phi^T_{\rho}
   = 
   \sum_{ \substack{ \alpha \in A(r-1,n) \\ \rho \in \Sigma_{0}(k,n) \\ \lfloor\alpha\rfloor \geq \lfloor\rho\rfloor \\ [\alpha]\cup[\rho]\subseteq[\imath(F,T)] } }
   \omega_{\alpha\rho} \lambda^{\alpha \circ \imath(F,T)}_F \phi^F_{ \imath(F,T)^\dagger \circ \rho }
   .
  \end{align*}
  Since the last sum is written in terms of the basis $\calB\calP^{-}_{r}\Lambda^{k}(F)$,
  we must have $\omega_{\alpha\rho} = 0$ for all $[\alpha]\cup[\rho] \subseteq [\imath(F,T)]$. 
  Since we assumed $F$ to be an arbitrary proper subsimplex of $T$,
  we get that $\omega_{\alpha\rho} = 0$
  when $[\alpha]\cup[\rho] \neq [0:n]$. 
  We conclude that $\calB\mathring\calP^{-}_r\Lambda^{k}(T)$ is a spanning set
  of $\mathring\calP^{-}_r\Lambda^{k}(T)$.
  It is linearly independent too, being a subset of $\calB\calP^{-}_r\Lambda^{k}(T)$.
  Hence $\calB\mathring\calP^{-}_r\Lambda^{k}(T)$ is a basis, as claimed,
  and $\calS\mathring\calP^{-}_r\Lambda^{k}(T)$ being a spanning set is a trivial consequence. 
 \springerqed\end{proof}

\begin{remark}
 \label{rem:basis_comparison_whitney}
 The bases for $\calP^{-}_{r}\Lambda^{k}(T)$ and $\mathring\calP^{-}_{r}\Lambda^{k}(T)$
 are identical to the bases presented or implied in Section 4 of \cite{AFW1} 
 (see Theorems~4.4~and~4.16 there)
 or in \cite{AFWgeodecomp}, which are all the same.
\end{remark}

\begin{remark}
    \label{rem:morevectoranalysis} 
    We illustrate the basis for the space of higher order Whitney forms $\calP_{r}^{-}\Lambda^{k}(T)$ 
    in the language of vector analysis. 
Over a triangle, a basis for the Raviart-Thomas space of degree $r-1$ is 
    \begin{align}
        \left\{\; 
            \lambda_T^{\alpha} \left( \lambda^T_p \nablalambda^T_q - \lambda^T_q \nablalambda^T_p \right)
            \suchthat* 
            \begin{array}{l}
                \alpha \in A(r-1,2), \; p,q \in \{0,1,2\}, 
                \\
                p < q,
                \; 
                \lfloor\alpha\rfloor \geq p
            \end{array} 
        \;\right\}.
    \end{align}
    Over a tetrahedron, 
    we have a basis for the curl-conforming N\'ed\'elec elements of the first kind of polynomial degree $r-1$,
    \begin{align}
        \left\{\; 
            \lambda_T^{\alpha} \left( \lambda^T_p \nablalambda^T_q - \lambda^T_q \nablalambda^T_p \right)
            \suchthat* 
            \begin{array}{l}
                \alpha \in A(r-1,3), \; p,q \in \{0,1,2,3\}, 
                \\
                p < q,
                \; 
                \lfloor\alpha\rfloor \geq p
            \end{array} 
        \;\right\}
        ,
    \end{align}
    and 
    we have a basis for the divergence-conforming N\'ed\'elec elements of the first kind of polynomial degree $r-1$,
    \begin{align}
        \begin{split}
        &
        \bigg\{\; 
            \lambda_T^{\alpha} \left(
                \lambda^T_p \nablalambda^T_q \times \nablalambda^T_s
                - 
                \lambda^T_q \nablalambda^T_p \times \nablalambda^T_s
                + 
                \lambda^T_s \nablalambda^T_p \times \nablalambda^T_q
                \right)
            \\&\qquad\qquad\qquad\qquad\qquad
            \qquad\qquad
            \bigg\vert
            \begin{array}{l}
                \alpha \in A(r-1,3), \; p,q,s \in \{0,1,2,3\}, 
                \\
                p < q < s,
                \; 
                \lfloor\alpha\rfloor \geq p
            \end{array} 
        \;\bigg\}.                 
        \end{split}
    \end{align}
\end{remark}

\subsection{Extension Operators} \label{subsec:finiteelementspaces:extension}

We define extension operators with desirable properties. 
Whenever $F$ is a subsimplex of $T$, we consider the operators
\begin{align*}
 \ext^{k,r}_{F,T} : \mathring\calP_r\Lambda^k(F) \rightarrow \calP_r\Lambda^k(T),
 \quad
 \ext^{k,r,-}_{F,T} : \mathring\calP^{-}_r\Lambda^k(F) \rightarrow \calP^{-}_r\Lambda^k(T),
\end{align*}
that are defined by setting
\begin{gather*}
 \ext^{k,r}_{F,T} \lambda_F^{\alpha}\cartanlambda^F_{\sigma}
 =
 \lambda_T^{ \widetilde\alpha }\cartanlambda^T_{ \widetilde\sigma }
 ,
 \quad
 \lambda_F^{\alpha}\cartanlambda^F_{\sigma} \in \calB\mathring\calP_r\Lambda^{k}(F)
 ,
 \\
 \ext^{k,r,-}_{F,T} \lambda_F^{\alpha} \phi^F_{\rho}
 =
 \lambda_T^{ \widetilde\alpha } \phi^T_{ \widetilde\rho }
 ,
 \quad
 \lambda_F^{\alpha} \phi^F_{\rho} \in \calB\mathring\calP^{-}_r\Lambda^{k}(F)
 , 
\end{gather*}
where $\widetilde\alpha = \alpha \circ \imath(F,T)^{\dagger}$ over $[\imath(F,T)]$ and zero otherwise, 
and where $\widetilde\sigma = \imath(F,T) \circ \sigma$ 
and $\widetilde\rho = \imath(F,T) \circ \rho$, respectively.
We emphasize that these extension operators, like our bases, generally depend on the ordering of the vertices. 

Their desirable properties are as follows. 
Whenever $G$ is another subsimplex of $T$ with $F \subseteq G$, then 
\begin{gather} \label{math:desirableproperty:eins}
 \trace_{T,G} \ext^{k,r}_{F,T}   = \ext^{k,r}_{F,G}
 , 
 \quad
 \trace_{T,G} \ext^{k,r,-}_{F,T} = \ext^{k,r,-}_{F,G}
 .
\end{gather}
Whenever $G$ is another subsimplex of $T$ with $F \nsubseteq G$, then 
\begin{gather} \label{math:desirableproperty:zwei}
 \trace_{T,G} \ext^{k,r}_{F,T} = 0, 
 \quad
 \trace_{T,G} \ext^{k,r,-}_{F,T} = 0.
\end{gather}
We will discuss in Section~\ref{sec:geometricdecompositions} how the two properties \eqref{math:desirableproperty:eins} and \eqref{math:desirableproperty:zwei} 
of the extension operators 
facilitate a geometric decomposition of the finite element space.
As a precursor, we notice the decompositions
\begin{gather*}
\calB\calP_{r}\Lambda^{k}(T)     = \bigcup_{F \subseteq T} \ext^{k,r}_{F,T} \calB\mathring\calP_{r}\Lambda^{k}(F), 
  \quad
  \calB\calP_{r}^{-}\Lambda^{k}(T) = \bigcup_{F \subseteq T} \ext^{k,r,-}_{F,T} \calB\mathring\calP_{r}^{-}\Lambda^{k}(F) 
 \end{gather*}
of the bases into contributions associated with the subsimplices of the simplex $T$.

\begin{remark}
  We compare these findings with the literature. 
  Our extension operator $\ext^{k,r,-}_{F,T}$ for the higher-order Whitney forms
  coincides with the extension operator in \cite{AFWgeodecomp}.
  The operator $\ext^{k,r}_{F,T}$ has appeared only implicitly in \cite{AFW1}
  and is different from the extension operator for the $\calP_r\Lambda^k$-family of spaces in \cite{AFWgeodecomp}.
  Generally, given extension operators satisfying \eqref{math:desirableproperty:eins}--\eqref{math:desirableproperty:zwei},
  any choice of bases for $\mathring\calP_{r}\Lambda^{k}(F)$ or $\mathring\calP_{r}^{-}\Lambda^{k}(F)$
  induces bases with an analogous decomposition. 
\end{remark}

\section{Linear dependencies}
\label{sec:lineardependencies}

We have introduced the canonical spanning sets for the spaces of polynomial differential forms over a simplex $T$. 
In this section we prove correspondences between the linear dependencies of those spanning sets
and discuss their applications.
Specifically, 
Theorem~\ref{prop:coefficientequivalence_eins} and Theorem~\ref{prop:coefficientequivalence_zwei} below 
show that we have a correspondence between the linear dependencies
of the canonical spanning sets of $\calP_r\Lambda^{k}(T)$ and $\mathring\calP^{-}_{r+k+1}\Lambda^{n-k}(T)$,
and a correspondence between the linear dependencies of the canonical spanning sets
of $\calP^{-}_{r+1}\Lambda^{n-k}(T)$ and $\mathring\calP_{r+n-k+1}\Lambda^{k}(T)$.
As a consequence, this defines isomorphisms between those spaces. 

\begin{remark}
    To put the main results into perspective,
    we make the following informal observation.
    Recall $\phi_{T}$, the constant unit volume form over $T$, 
    and let $\lambda_{T} := \lambda_{0} \lambda_{1} \lambda_{2} \cdots \lambda_{n}$
    be the canonical bubble function associated with the simplex $T$,
    which vanishes along the simplex boundary.
    The assignments 
    $
        \lambda^{\alpha} \phi_{T} \mapsto \lambda_{T} \lambda^{\alpha}
    $
    define isomorphisms from $\calP_{r+1}^{-}\Lambda^{n}(T)$
    onto $\calP_{r+n+1}\Lambda^{0}(T)$.
    They are stated in terms of the spanning sets 
    $\calS\calP_{r+1}^{-}\Lambda^{n}(T)$
    and $\calS\calP_{r+n+1}\Lambda^{0}(T)$,
    which are bases in this special case.
    We want to generalize this observation to barycentric differential forms.
    Since we have natural spanning sets for the spaces of polynomial differential forms, 
    we state our main results as \textit{correspondences between linear dependencies}
    of those natural spanning sets. 
    The resulting theorems, however, are more complex. 
\end{remark}

\begin{theorem} \label{prop:coefficientequivalence_eins}
    Let $r \in \bbN_{0}$ and $k \in [0:n]$. 
    Let $\omega_{\alpha\sigma} \in \bbC$ for $\sigma \in \Sigma(k,n)$ and $\alpha \in A(r,n)$.
    Then
    \begin{align}
        \label{math:coefficientequivalence_eins:partner}
        \sum_{ \substack{ \alpha \in A(r,n) \\ \sigma \in \Sigma(k,n) } }
        \omega_{\alpha\sigma} \lambda^{\alpha} \cartanlambda_{\sigma} = 0
        \quad\equivalent\quad
        \sum_{ \substack{ \alpha \in A(r,n) \\ \sigma \in \Sigma(k,n) } }
        \eps(\sigma,\sigma^c) \omega_{\alpha\sigma} \lambda^{\alpha} \lambda_{\sigma} \phi_{\sigma^c} = 0,
\end{align}
    each of which is the case if and only if
    \begin{align}
        \label{math:coefficientequivalence_eins:drittesrad}
        \omega_{\alpha\sigma}
        -
        \sum_{ p \in [\sigma] } \eps(p,\sigma-p) \omega_{\alpha,\sigma-p+0}
        =
        0
    \end{align}
    holds for $\alpha \in A(r,n)$ and 
    $\sigma \in \Sigma(k,n)$ with $0 \notin [\sigma]$.
\end{theorem}

\begin{proof}
    The statement is trivial if $k = 0$
    because in that case we merely restate that  
    $\calB\calP_{r}\Lambda^{0}(T)$ and $\calB\mathring\calP_{r+1}^{-}\Lambda^{n}(T)$
    are bases. 
    So assume that $1 \leq k \leq n$.
Consider 
\begin{align*}
        S_L 
        &:=
        \sum_{ \substack{ \alpha \in A(r,n) \\ \sigma \in \Sigma(k,n) } }
        \omega_{\alpha\sigma} \lambda^{\alpha} \cartanlambda_{\sigma}
        =
        \sum_{ \substack{ \alpha \in A(r,n) \\ \sigma \in \Sigma(k,n) \\ 0 \notin [\sigma] } }
        \omega_{\alpha\sigma} \lambda^{\alpha} \cartanlambda_{\sigma}
        +
        \sum_{ \substack{ \alpha \in A(r,n) \\ \sigma \in \Sigma(k,n) \\ 0 \in [\sigma] } }
        \omega_{\alpha\sigma} \lambda^{\alpha} \cartanlambda_{\sigma}
        .
    \end{align*}
    For $\sigma \in \Sigma(k,n)$ with $0 \in [\sigma]$ we observe 
\begin{align*}
        \cartanlambda_\sigma
        =
        \cartanlambda_0 \wedge \cartanlambda_{\sigma-0}
        &=
        - \sum_{ q \in [\sigma^c] } \cartanlambda_q \wedge \cartanlambda_{\sigma-0}
=
        \sum_{ q \in [\sigma^c] } \eps(q,\sigma) \cartanlambda_{\sigma-0+q}
        ,
    \end{align*}
    Direct application of this observation shows 
\begin{align*}
        S_L
        &=
        \sum_{ \substack{ \alpha \in A(r,n) \\ \sigma \in \Sigma(k,n) \\ 0 \notin [\sigma] } }
        \omega_{\alpha\sigma} \lambda^{\alpha} \cartanlambda_{\sigma}
        +
        \sum_{ \substack{ \alpha \in A(r,n) \\ \sigma \in \Sigma(k,n) \\ 0 \in [\sigma], \; q \in [\sigma^c] } }
        \omega_{\alpha\sigma} \lambda^{\alpha}
\eps(q,\sigma) \cartanlambda_{\sigma-0+q}
        .
    \end{align*}
    We want to reindex the second sum.
    To every pair $(\sigma,q)$ where 
    $\sigma \in \Sigma(k,n)$ with $0 \in [\sigma]$ and $q \in [\sigma^c]$
    corresponds a unique pair $(\tau,q)$
    where $\tau \in \Sigma(k,n)$ with $0 \notin [\tau]$ and $q \in [\tau]$ 
    by setting $\tau = \sigma - 0 + q$.
    Note that $\eps(q,\sigma) \cartanlambda_{\sigma-0+q} = \eps(q,\tau - q + 0) \cartanlambda_{\tau}$.
    With that reindexing it follows that 
\begin{align*}
        S_L
        &=
        \sum_{ \substack{ \alpha \in A(r,n) \\ \sigma \in \Sigma(k,n) \\ 0 \notin [\sigma] } }
        \left(
        \omega_{\alpha\sigma}
        +
        \sum_{ p \in [\sigma] } \eps(p,\sigma-p+0) \omega_{\alpha,\sigma-p+0}
        \right) \lambda^{\alpha} \cartanlambda_{\sigma}
        \\
        &=
        \sum_{ \substack{ \alpha \in A(r,n) \\ \sigma \in \Sigma(k,n) \\ 0 \notin [\sigma] } }
        \left(
        \omega_{\alpha\sigma}
        -
        \sum_{ p \in [\sigma] } \eps(p,\sigma-p) \omega_{\alpha,\sigma-p+0}
        \right) \lambda^{\alpha} \cartanlambda_{\sigma}
        .
\end{align*}
    This is an expression in a basis of $\calP_r\Lambda^{k}(T)$.
On the other hand, define $S_{R}$ by 
    \begin{align*}
        S_R
        &:= 
        \sum_{ \substack{ \alpha \in A(r,n) \\ \sigma \in \Sigma(k,n) } }
        \eps(\sigma,\sigma^c) \omega_{\alpha\sigma} \lambda^{\alpha} \lambda_{\sigma} \phi_{\sigma^c}
        \\&=
        \sum_{ \substack{ \alpha \in A(r,n) \\ \sigma \in \Sigma(k,n) \\ 0 \notin [\sigma] } }
        \eps(\sigma,\sigma^c) \omega_{\alpha\sigma} \lambda^{\alpha} \lambda_{\sigma} \phi_{\sigma^c}
        +
        \sum_{ \substack{ \alpha \in A(r,n) \\ \sigma \in \Sigma(k,n) \\ 0 \in [\sigma] } }
        \eps(\sigma,\sigma^c) \omega_{\alpha\sigma} \lambda^{\alpha} \lambda_{\sigma} \phi_{\sigma^c}
        .
    \end{align*}
Using Lemma~\ref{prop:whitneycancellationlemma}, 
    for $\sigma \in \Sigma(k,n)$ with $0 \in [\sigma]$ we observe 
    \begin{align*}
        \lambda_{\sigma} \phi_{\sigma^c}
        =
        \lambda_{\sigma-0} \lambda_0 \phi_{\sigma^c}
        &=
        - 
        \lambda_{\sigma-0} \sum_{ q \in [\sigma^c] } 
        \eps(q,\sigma^c+0-q) \lambda_{q} \phi_{\sigma^c + 0 - q}
        \\
        &=
        \lambda_{\sigma-0} \sum_{ q \in [\sigma^c] } 
        \eps(q,\sigma^c-q) \lambda_{q} \phi_{\sigma^c + 0 - q}
        .
    \end{align*}
    We apply that observation and use the same reindexing as above,
    which shows 
    \begin{align*}
        &
        \sum_{ \substack{ \alpha \in A(r,n) \\ \sigma \in \Sigma(k,n) \\ 0 \in [\sigma] } }
        \eps(\sigma,\sigma^c)
        \omega_{\alpha\sigma} \lambda^{\alpha} 
        \lambda_{\sigma} \phi_{\sigma^c}
        =
\sum_{ \substack{ \alpha \in A(r,n) \\ \sigma \in \Sigma(k,n) \\ 0 \in [\sigma] \\ q \in [\sigma^c] } }
        \eps(\sigma,\sigma^c)
        \eps(q,\sigma^c-q) 
        \omega_{\alpha\sigma} 
        \lambda^{\alpha} 
        \lambda_{\sigma-0+q} 
        \phi_{\sigma^c - q + 0}
        \\ &=
        \sum_{ \substack{ \alpha \in A(r,n) \\ \tau \in \Sigma(k,n) \\ 0 \notin [\tau], \; q \in [\tau] } }
        \eps(\tau + 0 - q,\tau^c - 0 + q)
        \eps(q,\tau^c - 0) 
        \omega_{\alpha,\tau + 0 - q} 
        \lambda^{\alpha} 
        \lambda_{\tau} 
        \phi_{\tau^c}
        .
    \end{align*}
    Note that
    for $\tau \in \Sigma(k,n)$ with $0 \notin [\tau]$ and $q \in [\tau]$
we have the combinatorial observation 
    \begin{gather*}
\eps(\tau,\tau^c)
        =
        \eps(q,\tau-q)
        (-1)
        \eps(q,\tau^c-0)
        \eps(\tau-q+0,\tau^c+q-0) 
        . 
    \end{gather*}
    The last formula can be seen as follows. 
    We want to bring the sequence $\tau$ followed by $\tau^c$ into ascending order. 
    First, we move $q \in [\tau]$ to the front, 
    and then apply $k-1$ transpositions to move $q$ in front of $\tau^c(0) = 0$. 
    We then apply $k$ transpositions to move $0 \in [\tau^c]$ to the very beginning,
    and order $q$ into the sequence $\tau^c-0$. 
    Now it remains to bring the sequence $\tau-q+0$ followed by $\tau^c+q-0$ into ascending order. 
    This together provides the above identity of signs.
    
    Applying all this,
    we see that $S_{R}$ equals 
\begin{align*}
        &
\sum_{ \substack{ \alpha \in A(r,n) \\ \sigma \in \Sigma(k,n) \\ 0 \notin [\sigma] } }
        \eps(\sigma,\sigma^c) 
        \left(
        \omega_{\alpha\sigma}
        -
        \sum_{p \in [\sigma]} 
        \eps(p,\sigma-p)
        \omega_{\alpha,\sigma-p+0} 
        \right) \lambda^\alpha \lambda_\sigma \phi_{\sigma^c}
        .
    \end{align*}
This is an expression in terms of a basis of 
    $\mathring\calP_{r+n-k+1}^{-}\Lambda^{n-k}(T)$.
    Thus $S_L = 0$ if and only if $S_R = 0$,
    which is the case if and only if \eqref{math:coefficientequivalence_eins:drittesrad} holds. 
\springerqed\end{proof}

\begin{theorem} \label{prop:coefficientequivalence_zwei}
    Let $r \in \bbN_{0}$ and $k \in [0:n]$. 
    Let $\omega_{\alpha\sigma} \in \bbC$ for $\sigma \in \Sigma(k,n)$ and $\alpha \in A(r,n)$.
    Then
    \begin{align}
        \label{math:coefficientequivalence_zwei:partner}
        \sum_{ \substack{ \alpha \in A(r,n) \\ \sigma \in \Sigma(k,n) } }
        \eps(\sigma,\sigma^c) \omega_{\alpha\sigma} \lambda^{\alpha} \phi_{\sigma^c} = 0
        \quad\equivalent\quad
        \sum_{ \substack{ \alpha \in A(r,n) \\ \sigma \in \Sigma(k,n) } }
        \omega_{\alpha\sigma} \lambda^{\alpha} \lambda_{\sigma^{c}} \cartanlambda_{\sigma} = 0
        ,
    \end{align}
    each of which is the case if and only if
    \begin{align}
        \label{math:coefficientequivalence_zwei:drittesrad}
        \omega_{\alpha\sigma} 
        -
        \sum_{ \substack{ q \in [\sigma]\cap[\alpha] } }
        \eps( \lfloor\sigma^c\rfloor, \sigma )
        \eps( q, \sigma - q ) 
        \omega_{\alpha+\lfloor\sigma^c\rfloor-q,\sigma + \lfloor\sigma^c\rfloor - q}
        =
        0
\end{align}
    holds for $\alpha \in A(r,n)$ and $\sigma \in \Sigma(k,n)$
    with $\lfloor\alpha\rfloor \geq \lfloor\sigma^c\rfloor$.
\end{theorem}

\begin{proof}
    If $r = 0$, the two sums in \eqref{math:coefficientequivalence_zwei:partner} 
    are already stated in terms of $\calB\calP_{r+1}^{-}\Lambda^{n-k}(T)$ and $\calB\mathring\calP_{r+n-k+1}\Lambda^{k}(T)$,
    respectively, 
    and \eqref{math:coefficientequivalence_zwei:drittesrad} just reduces to all coefficients vanishing.
    So it remains to study the case $r \geq 1$.
Furthermore, the statement is trivial if $k = 0$ 
    because in that case we merely restate that  
    $\calB\calP_{r+1}^{-}\Lambda^{n}(T)$ and $\calB\mathring\calP_{r+1}\Lambda^{0}(T)$
    are bases.
    So it remains to study the case $k \geq 1$.
    
    We define $S_{L}$ by setting
    \begin{align*}
        S_L
        &:= 
        \sum_{ \substack{ \alpha \in A(r,n) \\ \sigma \in \Sigma(k,n) } }
        \eps(\sigma,\sigma^c) \omega_{\alpha\sigma} \lambda^{\alpha} \phi_{\sigma^c}
        \\
        &
        =
        \sum_{ \substack{ \alpha \in A(r,n) \\ \sigma \in \Sigma(k,n) \\ \lfloor\alpha\rfloor \geq \lfloor\sigma^c\rfloor } }
        \eps(\sigma,\sigma^c) \omega_{\alpha\sigma} \lambda^{\alpha} \phi_{\sigma^c}
        +
        \sum_{ \substack{ \alpha \in A(r,n) \\ \sigma \in \Sigma(k,n) \\ \lfloor\alpha\rfloor < \lfloor\sigma^c\rfloor } }
        \eps(\sigma,\sigma^c) \omega_{\alpha\sigma} \lambda^{\alpha} \phi_{\sigma^c}.
    \end{align*}
    Using Lemma~\ref{prop:whitneycancellationlemma}, 
    for each $\sigma \in \Sigma(k,n)$ and $\alpha \in A(r,n)$
    with $\lfloor\alpha\rfloor < \lfloor\sigma^c\rfloor$ we have 
    \begin{align*}
        \lambda^{\alpha} \phi_{\sigma^c}
        =
        \lambda^{\alpha - \lfloor\alpha\rfloor} \lambda_{\lfloor\alpha\rfloor} \phi_{\sigma^c}
        &=
        -
        \lambda^{\alpha - \lfloor\alpha\rfloor}
        \sum_{ \substack{ q \in [\sigma^c] } }
        \eps( q, \sigma^c + \lfloor\alpha\rfloor - q ) \lambda_q \phi_{ \sigma^c + \lfloor\alpha\rfloor - q }
        \\&
        =
        \sum_{ \substack{ q \in [\sigma^c] } }
        \eps(q,\sigma^c-q) \lambda^{\alpha - \lfloor\alpha\rfloor + q}
        \phi_{\sigma^c + \lfloor\alpha\rfloor - q}
        .
\end{align*}
    Therefore we can rewrite 
    \begin{align*}
        \sum_{ \substack{ \alpha \in A(r,n) \\ \sigma \in \Sigma(k,n) \\ \lfloor\alpha\rfloor < \lfloor\sigma^c\rfloor } }
        \eps(\sigma,\sigma^c) \omega_{\alpha\sigma} \lambda^{\alpha} \phi_{\sigma^c}
        =
        \sum_{ \substack{ \alpha \in A(r,n) \\ \sigma \in \Sigma(k,n) \\ \lfloor\alpha\rfloor < \lfloor\sigma^c\rfloor \\ q \in [\sigma^c] } }
        \eps(\sigma,\sigma^c) \eps(q,\sigma^c-q) \omega_{\alpha\sigma} \lambda^{\alpha - \lfloor\alpha\rfloor + q}
        \phi_{\sigma^c + \lfloor\alpha\rfloor - q}
        .
\end{align*}
    We reindex the last sum. 
    Let $\sigma \in \Sigma(k,n)$, $\alpha \in A(r,n)$ 
    and $q \in [\sigma^{c}]$ with $\lfloor\alpha\rfloor < \lfloor\sigma^c\rfloor$. 
    Thus $\beta = \alpha - \lfloor\alpha\rfloor + q$
    and also $\rho = \sigma - \lfloor\alpha\rfloor + q$,
    we observe $q \in [\rho]\cap[\beta]$ and $\lfloor\rho^c\rfloor = \lfloor \sigma^c + \lfloor\alpha\rfloor - q \rfloor = \lfloor\alpha\rfloor < \lfloor\beta\rfloor$.
    Conversely,
    given $\rho \in \Sigma(k,n)$, $\beta \in A(r,n)$ 
    and $q \in [\rho] \cap [\beta]$ with $\lfloor\beta\rfloor > \lfloor\rho^c\rfloor$,
    we construct $\alpha = \beta + \lfloor\rho^c\rfloor - q$
    and $\sigma = \rho + \lfloor\rho^c\rfloor - q$.
    Thus $\lfloor\alpha\rfloor = \lfloor \beta + \lfloor\rho^c\rfloor - q \rfloor = \lfloor\rho^c\rfloor$
    and $\lfloor\sigma^c\rfloor = \lfloor \rho^c - \lfloor\rho^c\rfloor + q \rfloor > \lfloor\rho^c\rfloor$
    as well as $q \in [\sigma^c]$.
    Both constructions invert each other. 
    Hence the tuple $(\sigma,\alpha,q)$ uniquely determines the tuple $(\rho,\beta,q)$ and vice versa.
    After reindexing, 
    \begin{align*}
        &\quad
        \sum_{ \substack{ \alpha \in A(r,n) \\ \sigma \in \Sigma(k,n) \\ \lfloor\alpha\rfloor < \lfloor\sigma^c\rfloor } }
        \sum_{ \substack{ q \in [\sigma^c] } }
        \eps(\sigma,\sigma^c) \eps(q,\sigma^c-q) \omega_{\alpha\sigma} \lambda^{\alpha - \lfloor\alpha\rfloor + q}
        \phi_{\sigma^c + \lfloor\alpha\rfloor - q}
        \\&=
        \sum_{ \substack{ \beta \in A(r,n) \\ \rho \in \Sigma(k,n) \\ \lfloor\beta\rfloor \geq \lfloor\rho^c\rfloor \\ q \in [\rho]\cap[\beta] } }
        \eps(\rho+\lfloor\rho^{c}\rfloor-q,\rho^c-\lfloor\rho^{c}\rfloor+q) 
        \eps(q,\rho^c - \lfloor\rho^c\rfloor) 
        \omega_{\beta+\lfloor\rho^{c}\rfloor-q,\rho+\lfloor\rho^{c}\rfloor-q}
        \lambda^{\beta} \phi_{\rho^c}
        . 
    \end{align*}
    Consider $\rho \in \Sigma(k,n)$, $\beta \in A(r,n)$ and $q \in [\rho]\cap[\beta]$
    such that $\lfloor\beta\rfloor \geq \lfloor\rho^c\rfloor$.
    We immediately see $q > \lfloor\rho^c\rfloor$,
    and we make the combinatorial observations 
    \begin{gather*}
\eps(\lfloor\rho^{c}\rfloor,\rho-q)
        =
        \eps(\lfloor\rho^{c}\rfloor,\rho),
        \\
        \eps(\rho,\rho^c) 
        =
        -
        \eps(\rho-q,q) 
        \eps(\rho-q,\lfloor\rho^{c}\rfloor) 
        \eps(q,\rho^c - \lfloor\rho^c\rfloor) 
        \eps(\rho+\lfloor\rho^{c}\rfloor-q,\rho^c-\lfloor\rho^{c}\rfloor+q) 
        .
    \end{gather*}
    The last identity of signs is derived as follows. 
    We want to order the sequence $\rho$ followed by $\rho^c$ in ascending order.
    For that, we first move $q \in [\rho]$ to the end of the first part. 
    Then we switch the position of $q$ and $\lfloor\rho^{c}\rfloor$.
    Now we order the sequence $\rho - q$ followed by $\lfloor\rho^{c}\rfloor$
    and order $q$ into $\rho - \lfloor\rho^{c}\rfloor$.
    So it remains to order $\rho+\lfloor\rho^{c}\rfloor-q$ and $\rho^c-\lfloor\rho^{c}\rfloor+q$.
    
    The combination of those steps shows that $S_L$ equals
    {\begin{align*}
        \sum_{ \substack{ \alpha \in A(r,n) \\ \sigma \in \Sigma(k,n) \\ \lfloor\alpha\rfloor \geq \lfloor\sigma^c\rfloor } }
        \eps(\sigma,\sigma^c)
        \left(
            \omega_{\alpha\sigma}
            -
            \sum_{ \substack{ q \in [\sigma]\cap[\alpha] } }
            \eps(q,\sigma-q) 
            \eps(\lfloor\rho^{c}\rfloor,\sigma) 
            \omega_{\alpha+\lfloor\sigma^{c}\rfloor-q,\sigma+\lfloor\sigma^{c}\rfloor-q}
        \right)
        \lambda^{\alpha} \phi_{\sigma^c}
        .
    \end{align*}
    }This an expression in terms of a basis of $\calP_{r+1}^{-}\Lambda^{n-k}(T)$.
    Now, define $S_{R}$ by 
    \begin{align*}
        S_R 
        &:=
        \sum_{ \substack{ \alpha \in A(r,n) \\ \sigma \in \Sigma(k,n) } }
        \omega_{\alpha\sigma} \lambda^{\alpha} \lambda_{\sigma^c} \cartanlambda_{\sigma}
        \\
        &
        =
        \sum_{ \substack{ \alpha \in A(r,n) \\ \sigma \in \Sigma(k,n) \\ \lfloor\alpha\rfloor \geq \lfloor\sigma^c\rfloor } }
        \omega_{\alpha\sigma} \lambda^{\alpha} \lambda_{\sigma^c} \cartanlambda_{\sigma}
        +
        \sum_{ \substack{ \alpha \in A(r,n) \\ \sigma \in \Sigma(k,n) \\ \lfloor\alpha\rfloor < \lfloor\sigma^c\rfloor } }
        \omega_{\alpha\sigma} \lambda^{\alpha} \lambda_{\sigma^c} \cartanlambda_{\sigma}
        .
    \end{align*}
    For any $\alpha \in A(r,n)$ and $\sigma \in \Sigma(k,n)$
    with $\lfloor\alpha\rfloor < \lfloor\sigma^c\rfloor$ we see $\lfloor\alpha\rfloor \in [\sigma]$ and so 
    \begin{align*}
        \lambda^{\alpha} \lambda_{\sigma^c} \cartanlambda_{\sigma}
        &=
        \lambda^{\alpha} \lambda_{\sigma^c} 
        \eps( \lfloor\alpha\rfloor, \sigma - \lfloor\alpha\rfloor )
        \cartanlambda_{\lfloor\alpha\rfloor} 
        \wedge \cartanlambda_{\sigma - \lfloor\alpha\rfloor}
        \\&=
        -
        \lambda^{\alpha} \lambda_{\sigma^c} 
        \eps( \lfloor\alpha\rfloor, \sigma - \lfloor\alpha\rfloor )
        \sum_{ q \in [\sigma^c] }
        \eps( q, \sigma - \lfloor\alpha\rfloor ) 
        \cartanlambda_{\sigma - \lfloor\alpha\rfloor + q}
        .
    \end{align*}
    Here, we have used \eqref{math:partitionofzero}. 
    Thus we find 
    \begin{align*}
        &
        \sum_{ \substack{ \alpha \in A(r,n) \\ \sigma \in \Sigma(k,n) \\ \lfloor\alpha\rfloor < \lfloor\sigma^c\rfloor } }
        \omega_{\alpha\sigma} \lambda^{\alpha} \lambda_{\sigma^c} \cartanlambda_{\sigma}
=
\sum_{ \substack{ \alpha \in A(r,n) \\ \sigma \in \Sigma(k,n) \\ \lfloor\alpha\rfloor < \lfloor\sigma^c\rfloor \\ q \in [\sigma^c] } }
        \omega_{\alpha\sigma} \lambda^{\alpha} \lambda_{\sigma^c} 
        \eps( \lfloor\alpha\rfloor, \sigma - \lfloor\alpha\rfloor )
        \eps( q, \sigma ) 
        \cartanlambda_{\sigma - \lfloor\alpha\rfloor + q}
        .
\end{align*}
    Using the same reindexing as previously in this proof,
    we find that the last sum equals 
    \begin{align*}
        &
        \sum_{ \substack{ \beta \in A(r,n) \\ \rho \in \Sigma(k,n) \\ \lfloor\beta\rfloor \geq \lfloor\rho^c\rfloor \\ q \in [\rho]\cap[\beta] } }
\omega_{\beta+\lfloor\rho^c\rfloor-q,\rho + \lfloor\rho^c\rfloor - q}
        \lambda^{\beta+\lfloor\rho^c\rfloor-q} \lambda_{\rho^c-\lfloor\rho^c\rfloor+q} 
        \eps( \lfloor\rho^c\rfloor, \rho - q )
        \eps( q, \rho + \lfloor\rho^c\rfloor - q ) 
        \cartanlambda_{\rho}
.
    \end{align*}
    For each $\beta \in A(r,n)$, $\rho \in \Sigma(k,n)$, and $q \in [\rho]\cap[\beta]$ with $\lfloor\beta\rfloor \geq \lfloor\rho^c\rfloor$
    we have $q > \lfloor\rho^c\rfloor$,
    as noted earlier,
    and so we can use that 
    \begin{gather*}
        \lambda^{\beta+\lfloor\rho^c\rfloor-q} \lambda_{\rho^c-\lfloor\rho^c\rfloor+q} = \lambda^{\beta} \lambda_{\rho^c},
        \\
        \eps( \lfloor\rho^c\rfloor, \rho - q ) = \eps( \lfloor\rho^c\rfloor, \rho ),
        \quad 
        \eps( q, \rho + \lfloor\rho^c\rfloor - q ) = - \eps( q, \rho - q ).
    \end{gather*}
    Putting this all together, 
    we see that $S_{R}$ equals 
{\begin{align*}
\sum_{ \substack{ \alpha \in A(r,n) \\ \sigma \in \Sigma(k,n) \\
        \lfloor\alpha\rfloor \geq \lfloor\sigma^c\rfloor } }
        \left(
        \omega_{\alpha\sigma} 
        -
        \sum_{ \substack{ q \in [\sigma]\cap[\alpha] } }
        \eps( \lfloor\sigma^c\rfloor, \sigma )
        \eps( q, \sigma - q ) 
        \omega_{\alpha+\lfloor\sigma^c\rfloor-q,\sigma + \lfloor\sigma^c\rfloor - q}
\right)
        \lambda^{\alpha} \lambda_{\sigma^c} \cartanlambda_{\sigma}
        .
\end{align*}
    }This is an expression in terms of a basis of $\mathring\calP_{r+n-k+1}\Lambda^{k}(T)$.
Thus $S_L = 0$ if and only if $S_R = 0$,
    which is the case if and only if \eqref{math:coefficientequivalence_zwei:drittesrad} holds. 
\springerqed\end{proof}

The two results above have multiple applications. 
For example, any basis in one space of the pairing corresponds to a basis in the other space. 
Another consequence is that we have isomorphisms 
\begin{align*}
 \calP_r\Lambda^{k}(T)
 \simeq 
 \mathring\calP_{r+k+1}^{-}\Lambda^{n-k}(T), 
 \quad
 \calP_{r+1}^{-}\Lambda^{n-k}(T)
 \simeq
 \mathring\calP_{r+n-k+1}\Lambda^{k}(T).
\end{align*}
that preserve the canonical spanning sets.
These isomorphisms are explicitly described as follows. 
We have a linear isomorphism 
from $\calP_r\Lambda^{k}(T)$ to $\mathring\calP^{-}_{r+k+1}\Lambda^{n-k}(T)$
that in terms of spanning sets can be written as 
\begin{align}
  \label{math:coefficientisomorphism_eins}
  \sum_{ \substack{ \alpha \in A(r,n) \\ \sigma \in \Sigma(k,n) } }
  \omega_{\alpha\sigma} \lambda^{\alpha} \cartanlambda_{\sigma}
  \mapsto
  \sum_{ \substack{ \alpha \in A(r,n) \\ \sigma \in \Sigma(k,n) } }
  \omega_{\alpha\sigma} \lambda^{\alpha} \lambda_{\sigma} \phi_{\sigma^c}
  ,
\end{align}
and we have a linear isomorphism 
from $\calP^{-}_{r+1}\Lambda^{n-k}(T)$ to $\mathring\calP_{r+n-k+1}\Lambda^{k}(T)$ 
that in terms of spanning sets can be written as 
\begin{align}
  \label{math:coefficientisomorphism_zwei}
  \sum_{ \substack{ \alpha \in A(r,n) \\ \sigma \in \Sigma(k,n) } }
  \omega_{\alpha\sigma} \lambda^{\alpha} \phi_{\sigma^c}
  \mapsto
  \sum_{ \substack{ \alpha \in A(r,n) \\ \sigma \in \Sigma(k,n) } }
  \omega_{\alpha\sigma} \lambda^{\alpha} \lambda_{\sigma^c} \cartanlambda_{\sigma}
  .
\end{align}

\begin{remark}
    \label{rem:specialcasesoflineardependencies}
    We illustrate a few special cases. 
    In the special case $k=0$, 
    the set $\Sigma(k,n)$ only contains the empty mapping,
    Theorem~\ref{prop:coefficientequivalence_eins} and Theorem~\ref{prop:coefficientequivalence_zwei}
    translate to
\begin{align*}
        \sum_{ \substack{ \alpha \in A(r,n) } }
        \omega_{\alpha} \lambda^{\alpha} = 0
        \quad&\equivalent\quad
        \sum_{ \substack{ \alpha \in A(r,n) } }
        \omega_{\alpha} \lambda^{\alpha} \phi_{T} = 0,
        \\
        \sum_{ \substack{ \alpha \in A(r,n) } }
        \omega_{\alpha} \lambda^{\alpha} \phi_{T} = 0
        \quad&\equivalent\quad
        \sum_{ \substack{ \alpha \in A(r,n) } }
        \omega_{\alpha} \lambda^{\alpha} \lambda_{T} = 0,
    \end{align*}
    respectively, and each of those conditions is equivalent to $\omega_{\alpha} = 0$ for all $\alpha \in A(r,n)$. These correspond to the isomorphisms 
    $\calP_{r}\Lambda^{0}(T) \simeq \mathring\calP_{r+1}\Lambda^{n}(T)$
    and $\calP_{r}^{-}\Lambda^{n}(T) \simeq \mathring\calP_{r+n+1}\Lambda^{0}(T)$.
On the other hand, for discussing the special case $k=n$, 
    let us first assume that $\sigma \in \Sigma(n,n)$. 
    There exists a unique index $i \in [0:n]$ with $i \notin [\sigma]$.
    We write $\cartanlambda_{n:i} := \cartanlambda_{\sigma}$ and $\omega_{\alpha\sigma} = \omega_{\alpha,i}$.
    From definitions we easily see  
    $\phi_{\sigma^{c}} = \lambda_{i}$ and 
    $\cartanlambda_{\sigma} = \cartanlambda_{0} \wedge\cdots\wedge \cartanlambda_{i-1}  \wedge\cartanlambda_{i+1} \wedge\dots\wedge  \cartanlambda_{n}$. 
    Theorem~\ref{prop:coefficientequivalence_eins} and Theorem~\ref{prop:coefficientequivalence_zwei} translate to 
    \begin{align*}
        \sum_{ \substack{ \alpha \in A(r,n) \\ 0 \leq i \leq n } }
        \omega_{\alpha,i} \lambda^{\alpha} \cartanlambda_{n:i} = 0
        \quad&\equivalent\quad
        \sum_{ \substack{ \alpha \in A(r,n) \\ 0 \leq i \leq n } }
        (-1)^{n+i} \omega_{\alpha,i} \lambda^{\alpha} \lambda_{T} = 0,
        \\
        \sum_{ \substack{ \alpha \in A(r,n) \\ 0 \leq i \leq n } }
        (-1)^{n+i} \omega_{\alpha,i} \lambda^{\alpha} \lambda_{i} = 0
        \quad&\equivalent\quad
        \sum_{ \substack{ \alpha \in A(r,n) \\ 0 \leq i \leq n } }
        \omega_{\alpha,i} \lambda^{\alpha} \lambda_{i} \cartanlambda_{n:i} = 0
        .
    \end{align*}
    Those are relations between the spanning sets
    $\calS\calP_{r}\Lambda^{n}$ and $\calS\mathring\calP_{r+n+1}^{-}\Lambda^{0}$ 
    and between the spanning sets $\calS\calP_{r+1}^{-}\Lambda^{0}$ and $\calS\calP_{r+1}\Lambda^{n}$, 
    respectively. In the latter case, those spanning sets are bases. 
\end{remark}

Above, 
we have stated conditions on the coefficients under which linear combinations 
of differential forms from the canonical spanning equal zero. 
We prove two more auxiliary results,
Lemma~\ref{prop:coefficientcondition_eins} and Lemma~\ref{prop:coefficientcondition_zwei},
which further characterize the conditions on the coefficients. 
These are important in the next section.

\begin{lemma} \label{prop:coefficientcondition_eins}
 Let $r \in \bbN_{0}$ and $k \in [0:n]$.
 Let $\omega_{\alpha\sigma}$ be a family of complex numbers
 indexed over $\alpha \in A(r,n)$ and 
 $\sigma \in \Sigma(k,n)$.
 Then 
 \begin{align} \label{prop:coefficientcondition_eins:uno}
  \omega_{\alpha\sigma}
  -
  \sum_{ p \in [\sigma] } \eps(p,\sigma-p) \omega_{\alpha,\sigma-p+0}
  =
  0
 \end{align}
 holds for all $\alpha \in A(r,n)$ and 
 $\sigma \in \Sigma(k,n)$ with $0 \notin [\sigma]$
 if and only if
 \begin{align} \label{prop:coefficientcondition_eins:duo}
  \sum_{ p \in [\theta] } \eps(p,\theta-p) \omega_{\alpha,\theta-p} = 0
 \end{align}
 holds for all $\alpha \in A(r,n)$ and 
 $\theta \in \Sigma(k+1,n)$.
\end{lemma}

\begin{proof}
 The lemma is trivial in the special case $k=0$.
 So assume that $1 \leq k \leq n$.
 Clearly, \eqref{prop:coefficientcondition_eins:duo} implies \eqref{prop:coefficientcondition_eins:uno}
 via $\theta = \sigma + 0$.
 So let us suppose \eqref{prop:coefficientcondition_eins:uno} holds.
Then \eqref{prop:coefficientcondition_eins:duo} clearly holds for all $\theta$ with $0 \in [\theta]$.
 If instead $0 \notin [\theta]$, 
 then \eqref{prop:coefficientcondition_eins:uno} implies 
 \begin{align*}
  \sum_{ p \in [\theta] }
  \eps(p,\theta-p) \omega_{\alpha,\theta-p}
  &=
  \sum_{ p \in [\theta] }
  \sum_{ s \in [\theta-p] }
  \eps(p,\theta-p)
  \eps(s,\theta-p-s)
  \omega_{\alpha,\theta-p-s+0}
  \\&=
  \sum_{ p \in [\theta] }
  \sum_{ s \in [\theta-p] }
  \eps(p,s)
  \eps(p,\theta-p)
  \eps(s,\theta-s)
  \omega_{\alpha,\theta-p-s+0}
  .
 \end{align*}
 This sum vanishes by an antisymmetry argument.
 The lemma follows.
\springerqed\end{proof}

\begin{lemma} \label{prop:coefficientcondition_zwei}
 Let $r \in \bbN_{0}$ and $k \in [0:n]$.
 Let $\omega_{\alpha\sigma}$ be a family of complex numbers
 indexed over $\alpha \in A(r,n)$ and 
 $\sigma \in \Sigma(k,n)$.
 Then 
 \begin{align} \label{prop:coefficientcondition_zwei:uno}
  \omega_{\alpha\sigma} 
  -
\sum_{ \substack{ p \in [\sigma]\cap[\alpha] } }
  \eps( \lfloor\sigma^c\rfloor, \sigma )
  \eps( p, \sigma - p ) 
  \omega_{\alpha + \lfloor\sigma^c\rfloor - p,\sigma + \lfloor\sigma^c\rfloor - p}
  =
  0
\end{align}
 holds for all $\alpha \in A(r,n)$ and 
 $\sigma \in \Sigma(k,n)$ with $\lfloor\alpha\rfloor \geq \lfloor\sigma^c\rfloor$
 if and only if
 \begin{align} \label{prop:coefficientcondition_zwei:duo}
  \sum_{p \in [\theta] \cap [\beta]}
  \eps(p,\theta-p)
  \omega_{\beta-p,\theta-p}
  =
  0
 \end{align}
 holds for all $\beta \in A(r+1,n)$ and 
 $\theta \in \Sigma(k+1,n)$.
\end{lemma}

\begin{proof}
    The lemma is trivial in the special cases $k=0$ or $r=0$,
    so we assume that $1 \leq k \leq n$ and $r \geq 1$.
Suppose \eqref{prop:coefficientcondition_zwei:duo} holds.
    Let $\alpha \in A(r,n)$ and $\sigma \in \Sigma(k,n)$
    satisfy $\lfloor\alpha\rfloor \geq \lfloor\sigma^c\rfloor$. 
    Using \eqref{prop:coefficientcondition_zwei:duo} with $\beta = \alpha + \lfloor\sigma^c\rfloor$ 
    and $\theta = \sigma + \lfloor\sigma^c\rfloor$, we get 
    \begin{align*}
        0
        &=
        \sum_{ \substack{ p \in [\sigma+\lfloor\sigma^c\rfloor]\cap[\alpha+\lfloor\sigma^c\rfloor] } }
        \eps( p, \sigma + \lfloor\sigma^c\rfloor - p ) 
        \omega_{\alpha+\lfloor\sigma^c\rfloor-p,\sigma + \lfloor\sigma^c\rfloor - p}
        \\
        &=
        \eps( \lfloor\sigma^c\rfloor, \sigma )
        \omega_{\alpha\sigma} 
        +
        \sum_{ \substack{ p \in [\sigma]\cap[\alpha] } }
        \eps( p, \sigma + \lfloor\sigma^c\rfloor - p ) 
        \omega_{ \alpha + \lfloor\sigma^c\rfloor - p, \sigma + \lfloor\sigma^c\rfloor - p }
        .
    \end{align*}
    We have $\eps( p, \sigma + \lfloor\sigma^c\rfloor - p ) = - \eps( p, \sigma - p )$
    because $\lfloor\alpha\rfloor \geq \lfloor\sigma^c\rfloor$,
    and so \eqref{prop:coefficientcondition_zwei:uno} follows. 
Conversely, 
    we suppose that \eqref{prop:coefficientcondition_zwei:uno} holds and derive \eqref{prop:coefficientcondition_zwei:duo}.
    Let $\beta \in A(r+1,n)$ and $\theta \in \Sigma(k+1,n)$.
    We make a case distinction. In the case $0 \in [\theta] \cap [\beta]$,
    we set $\sigma = \theta - 0$ and $\alpha = \beta - 0$,
    noting that $\lfloor\sigma^c\rfloor = 0$.
    Via those definitions, 
    \begin{align*}
        \sum_{p \in [\theta] \cap [\beta]}
        \eps(p,\theta-p)
        \omega_{\beta-p,\theta-p}
        =
        \eps( 0, \sigma ) \omega_{\alpha\sigma} 
        +
        \sum_{ \substack{ p \in [\sigma]\cap[\alpha] } }
        \eps( p, \sigma + 0 - p ) 
        \omega_{\alpha+0-p,\sigma + 0 - p}
        .
    \end{align*}
    We use $\eps( 0, \sigma ) = 1$ and $\eps( p, \sigma + 0 - p ) = - \eps( p, \sigma - p )$,
    and \eqref{prop:coefficientcondition_zwei:uno} to derive \eqref{prop:coefficientcondition_zwei:duo}. 
    It remains to consider the case $0 \notin [\theta] \cap [\beta]$.
    For such $\theta$ and $\beta$, 
    we reuse the results from the first case and get 
\begin{align*}
        &\quad
        \sum_{ q \in [\theta]\cap[\beta] }
        \eps( q,\theta-q) \omega_{\beta-q,\theta-q}
\\&=
        \sum_{ q \in [\theta]\cap[\beta] }
        \sum_{ p \in [\theta]\cap[\beta] \setminus \{q\} }
        \eps( q,\theta-q) \eps(p,\theta-q-p) \omega_{\beta-q+0-p,\theta-q+0-p}
        . 
    \end{align*}
We use $\eps(p,\theta-q-p) = \eps(q,p) \eps(p,\theta-p)$
    and notice that the sum vanishes if and only if
    \begin{align*}
        0
        =
        \sum_{ \substack{ p,q \in [\theta]\cap[\beta] \\ p \neq q } }
        \eps( \theta+0-q,q) \eps(p,q) \eps(p,\theta+0-p) 
        \omega_{\beta-q+0-p,\theta-q+0-p}
        .
\end{align*}
    Evidently, the terms in that expression cancel.
    The statement is proven.
\springerqed\end{proof}

\begin{remark}
    \label{rem:lineardependencies:lit}
    We put this section's findings into the context of the literature. 
This section completes partial results that have appeared previously. 
Our isomorphisms \eqref{math:coefficientisomorphism_eins} and \eqref{math:coefficientisomorphism_zwei}
    are identical to the isomorphism used in Theorem~4.16 and Theorem~4.22, respectively, of \cite{AFW1}. 
    However, the isomorphisms are only stated in terms of basis forms in that reference;
    we have complemented that by showing that they preserve the canonical spanning sets. 

    Identity~\eqref{math:coefficientequivalence_eins:partner} in Theorem~\ref{prop:coefficientequivalence_eins}
    is implied by Proposition~3.7 of \cite{christiansen2013high}
    but our analogous identity in Theorem~\ref{prop:coefficientequivalence_zwei} is a new result. 
    We have discussed several equations that describe the linear dependencies of the canonical spanning set, 
    Equations \eqref{math:coefficientequivalence_eins:drittesrad} and \eqref{math:coefficientequivalence_zwei:drittesrad},
    and Lemmas \ref{prop:coefficientcondition_eins} and \ref{prop:coefficientcondition_zwei}, 
    which have not appeared in previous works. 
\end{remark}

\section{Duality Pairings}
\label{sec:duality}

We have seen in the last section that there exist isomorphisms
\begin{align*}
 \calP_r\Lambda^{k}(T) \simeq \mathring\calP^-_{r+k+1}\Lambda^{n-k}(T),
 \quad
 \calP^-_{r+1}\Lambda^{n-k}(T) \simeq \mathring\calP_{r+n-k+1}\Lambda^{k}(T).
\end{align*}
In this section, we extend that observation and introduce non-degenerate bilinear pairings 
between the spaces $\calP_r\Lambda^{k}(T)$ and $\mathring\calP^-_{r+k+1}\Lambda^{n-k}(T)$ 
and between the spaces $\calP^-_{r+1}\Lambda^{n-k}(T)$ and $\mathring\calP_{r+n-k+1}\Lambda^{k}(T)$,
respectively.

Towards the construction of those bilinear pairings, 
we first introduce bilinear forms on the corresponding coefficient spaces.
We write 
\begin{gather*}
 \calP(k,r,n) := \bbC^{A(r,n) \times \Sigma(k,n)}
\end{gather*}
for the abstract complex vector space generated by the set ${A(r,n) \times \Sigma(k,n)}$.
The members of that vector space represent the coefficients 
in linear combinations of the canonical spanning sets. 

We have a bilinear form over $\calP(k,r,n)$
which for $\omega, \eta \in \calP(k,r,n)$ is given by 
\begin{align}
 \label{math:coefficientpairing:eins}
 (\omega,\eta)
 \mapsto
 \sum_{\alpha,\beta \in A(r,n)}
 \sum_{\sigma,\rho \in \Sigma(k,n)}
 \int_{T}
 \omega_{\alpha\sigma} \lambda^{\alpha} \cartanlambda_{\sigma}
 \wedge
 \eps(\rho,\rho^{c}) 
 \overline{\eta_{\beta\rho}} \lambda^{\beta} \lambda_{\rho} \phi_{\rho^{c}}
 ,
\end{align}
and another bilinear form over $\calP(k,r,n)$
which for $\omega, \eta \in \calP(k,r,n)$ is given by 
\begin{align}
 \label{math:coefficientpairing:zwei}
 (\omega,\eta)
 \mapsto
 \sum_{\alpha,\beta \in A(r,n)}
 \sum_{\sigma,\rho \in \Sigma(k,n)}
 \int_{T}
 \omega_{\alpha\sigma} \lambda^{\alpha} \lambda_{\sigma^{c}} \cartanlambda_{\sigma}
 \wedge
 \eps(\rho,\rho^{c}) 
 \overline{\eta_{\beta\rho}} \lambda^{\beta} \phi_{\rho^{c}}
 . 
\end{align}
The agenda of this section is to show that these pairings are Hermitian 
and that their degeneracy spaces correspond to the linear dependencies 
discussed in the previous section. This will show that these pairings, 
first defined over coefficients for the canonical spanning sets,
actually correspond to non-degenerate bilinear pairings 
of finite element spaces.

Basic properties of these bilinear pairings will be shown next. 
The following first lemma tells us that the bilinear pairings are,
in a certain sense, sparse. 

\begin{lemma} \label{prop:pairing_internal_sparsity}
 Let $r \in \bbN_{0}$, $k \in [0:n]$,
 and $\sigma, \rho \in \Sigma(k,n)$.
 If $\lvert[\sigma] \cap [\rho^c]\rvert > 1$, then 
 \begin{align*}
  \cartanlambda_{\sigma} \wedge \eps(\rho,\rho^{c}) \phi_{\rho^c} = 0
  .
 \end{align*}
\end{lemma}

\begin{proof}
 We expand the Whitney form $\phi_{\rho^c}$ according to \eqref{math:definitionwhitneyform}. Then 
 \begin{align*}
  \cartanlambda_{\sigma} \wedge \phi_{\rho^c} 
  = 
  \sum_{p \in [\rho^c]} \eps(p,\rho^c-p) 
  \lambda^{T}_{p} 
  \cartanlambda_{\sigma} 
  \wedge 
  \cartanlambda^{T}_{\rho^c-p}
  =
  0
\end{align*}
because of $\lvert[\sigma] \cap [\rho^c]\rvert > 1$ and the properties of the alternating product.
\springerqed\end{proof}

Next we turn our attention to the ``diagonal`` terms in the bilinear pairings.

\begin{lemma} \label{prop:pairing_internal_diagonal}
 Let $r \in \bbN_{0}$, $k \in [0:n]$,
 and $\sigma, \rho \in \Sigma(k,n)$.
 If $\lvert [\sigma] \cap [\rho^c] \rvert = 0$, then $\sigma = \rho$ and 
 \begin{align*}
  \cartanlambda_{\sigma} \wedge \eps(\sigma,\sigma^{c}) \phi_{\sigma^c}
  =
  (-1)^k \sum_{q \in [\sigma^c]} \lambda_{q} \phi_T
  . \end{align*}
\end{lemma}

\begin{proof}
 Suppose that $[\sigma]\cap[\rho^c] = \emptyset$. 
 It is easily seen that $\sigma = \rho$.
 Using \eqref{math:definitionwhitneyform}, 
 \eqref{math:barycentricdifferentialscombinatorics} and Lemma~\ref{prop:differentialdecomposition}, 
 we find 
 \begin{align*}
  \cartanlambda_{\sigma} \wedge \phi_{\sigma^c}
  &=
  \cartanlambda_{\sigma} \wedge \sum_{q \in [\sigma^c]} \lambda_{q} \eps(q,\sigma^c-q) \cartanlambda_{\sigma^c-q}
  \\&=
  \sum_{q \in [\sigma^c]} \lambda_{q} \eps(q,\sigma^c-q) \eps(\sigma,\sigma^c-q) \cartanlambda_{\sigma+\sigma^c-q}
  \\&=
  \sum_{q \in [\sigma^c]} \lambda_{q} \eps(q,\sigma^c-q) \eps(\sigma,\sigma^c-q) \eps(q,\sigma+\sigma^c-q) \phi_{T}
  .
 \end{align*} We finish by applying the combinatorial identity 
 \begin{align*}
  \eps(\sigma,\sigma^c)
  =
  \eps(q,\sigma^c-q) (-1)^k \eps(\sigma,\sigma^c-q) \eps(q,\sigma+\sigma^c-q)
  ,
 \end{align*}
 which can be seen as follows.
 Suppose we want to order the sequence of numbers given by $\sigma$ and then $\sigma^c$.
 To do so, we first move $q \in [\sigma^c]$ in front of the $\sigma^c$ part,
 then apply further $k$ transpositions to move $q$ in front of all numbers.
 Then we order the sequence given by $\sigma$ and $\sigma^c-q$,
 and finally we move $q$ into the sequence given by $\sigma+\sigma^c-q$.
 This shows the identity and finishes the proof.
\springerqed\end{proof}

Finally, the following applies to the non-zero ``off-diagonal`` terms in the bilinear pairings.

\begin{lemma} \label{prop:pairing_internal_offdiagonal}
 Let $r \in \bbN_{0}$, $k \in [0:n]$,
 and $\sigma, \rho \in \Sigma(k,n)$.
 If $\lvert [\sigma] \cap [\rho^c] \rvert = 1$, 
 then there exist unique $q \in [\sigma^c]$ and $p \in [\sigma]$
 satisfying $\rho = \sigma - p + q$,
 and we have 
 \begin{gather}
    \label{math:pairing_internal_offdiagonal:explicit}
    \cartanlambda_{\sigma} \wedge \eps(\rho,\rho^c) \phi_{\rho^c}
    =
    (-1)^{k+1} 
    \eps(p,\sigma-p) \eps(q,\sigma-p)
    \lambda_{p} \phi_{T}
    ,
    \\
    \label{math:pairing_internal_offdiagonal:symmetry_eins}
    \cartanlambda_{\sigma} \wedge \eps(\rho,\rho^c) \lambda_\rho \phi_{\rho^c}
    =
    \cartanlambda_{\rho} \wedge \eps(\sigma,\sigma^c) \lambda_\sigma \phi_{\sigma^c}
    ,
    \\
    \label{math:pairing_internal_offdiagonal:symmetry_zwei}
    \lambda_{\sigma^c} \cartanlambda_{\sigma} \wedge \eps(\rho,\rho^c) \phi_{\rho^c}
    =
    \lambda_{\rho^c} \cartanlambda_{\rho} \wedge \eps(\sigma,\sigma^c) \phi_{\sigma^c}
    .
 \end{gather}
\end{lemma}

\begin{proof}
 There exists a unique $p \in [\sigma] \cap [\rho^{c}]$.
 Since $[\sigma-p] \subseteq [\rho]$,
 there exists a unique $q \in [\rho]$
 with $\sigma - p + q = \rho$.
 We notice that $\rho^c = \sigma^c - q + p$
 as well as $[\sigma] \cap [\rho^c] = \{p\}$ and $[\sigma^c] \cap [\rho] = \{q\}$.
Expanding the Whitney form via \eqref{math:definitionwhitneyform} gives
 \begin{align*}
  \cartanlambda_{\sigma} \wedge \phi_{\rho^c}
  =
  \cartanlambda_{\sigma} \wedge \phi_{\sigma^c - q + p}
  =
  \eps(p,\sigma^c - q) \lambda_{p} \cartanlambda_{\sigma} \wedge \cartanlambda_{\sigma^c - q}
  .
 \end{align*}
 Application of Lemma~\ref{prop:differentialdecomposition} shows
 \begin{align*}
  \cartanlambda_{\sigma} \wedge \cartanlambda_{\sigma^c - q}
  =
  \eps(\sigma,\sigma^c-q) \cartanlambda_{\sigma+\sigma^c - q}
  =
  \eps(\sigma,\sigma^c-q) \eps(q,\sigma+\sigma^c-q) \phi_{T}
  .
 \end{align*}
 Applying the same combinatorial identity as in the previous proof, we see 
 \begin{align*}
  \cartanlambda_{\sigma} \wedge \phi_{\rho^c}
  &=
  (-1)^{k} \eps(p,\sigma^c - q) \eps(\sigma,\sigma^c) \eps(q,\sigma^c-q)
  \lambda_{p} \phi_{T}
  .
 \end{align*}
 We apply another combinatorial identity,
 \begin{align*}
  &
  \eps(\sigma,\sigma^c)
  =
  -
  \eps(\sigma-p,p) \eps(q,\sigma^c-q) \eps(\sigma-p,q) \eps(p,\sigma^c-q) \eps(\sigma-p+q,\sigma^c-q+p)
  ,
 \end{align*}
 which can be seen as follows. 
 We want to order the sequence $\sigma$ followed by $\sigma^c$.
 First we move $p \in [\sigma]$ to the end of the first sequence 
 and $q \in [\sigma^c]$ to the front of the second.
 Then we switch $p$ and $q$.  
 We order the sequence $\sigma-p$ followed by $q$ 
 and order $p$ into the sequence $\sigma^c-q$.
 Thus it remains to order the sequences $\sigma-p+q$ and $\sigma^c-q+p$.
We derive
 \begin{align*}
  \cartanlambda_{\sigma} \wedge \phi_{\rho^c}
  &=
  (-1)^{k+1} 
  \eps(\sigma-p,p) \eps(\sigma-p,q) \eps(\sigma-p+q,\sigma^c-q+p)
  \lambda_{p} \phi_{T}
  \\&=
  (-1)^{k+1} 
  \eps(p,\sigma-p) \eps(q,\sigma-p) \eps(\sigma-p+q,\sigma^c-q+p)
  \lambda_{p} \phi_{T}
  .
 \end{align*}
 This, together with $\rho = \sigma - p + q$, 
 shows \eqref{math:pairing_internal_offdiagonal:explicit}.
 Note that $\rho + p = \sigma + q$ and $\rho - q = \sigma - p$. 
 We show \eqref{math:pairing_internal_offdiagonal:symmetry_eins} via
 \begin{align*}
  \cartanlambda_{\sigma} \wedge \eps(\rho,\rho^c) \lambda_{\rho} \phi_{\rho^c}
  &=
  (-1)^{k+1} \eps(p,\sigma-p) \eps(q,\sigma-p)
  \lambda_{\rho} \lambda_{p} \phi_{T}
  ,
  \\
  \cartanlambda_{\rho} \wedge \eps(\sigma,\sigma^c) \lambda_{\sigma} \phi_{\sigma^c}
  &=
  (-1)^{k+1} \eps(q,\rho-q) \eps(p,\rho-q)
  \lambda_{\sigma} \lambda_{q} \phi_{T}
  .
 \end{align*}
 Note also that $\rho^c + q = \sigma^c + p$. 
 We show \eqref{math:pairing_internal_offdiagonal:symmetry_zwei} via
 \begin{align*}
  \lambda_{\sigma^c} \cartanlambda_{\sigma} \wedge \eps(\rho,\rho^c) \phi_{\rho^c}
  &=
  (-1)^{k+1} \eps(p,\sigma-p) \eps(q,\sigma-p)
  \lambda_{\sigma^c} \lambda_{p} \phi_{T}
  ,
  \\
  \lambda_{\rho^c}   \cartanlambda_{\rho} \wedge \eps(\sigma,\sigma^c) \phi_{\sigma^c}
  &=
  (-1)^{k+1} \eps(q,\rho-q) \eps(p,\rho-q)
  \lambda_{\rho^c} \lambda_{q} \phi_{T}
  .
 \end{align*}
 The proof is complete. 
\springerqed\end{proof}

The following combines the preceding results
and implies that the pairings are indeed Hermitian.

\begin{lemma}
  Let $r \in \bbN_{0}$, $k \in [0:n]$,
  and $\sigma, \rho \in \Sigma(k,n)$.
  Then 
  \begin{align*}
    \cartanlambda_{\sigma} \wedge \eps(\rho,\rho^c) \lambda_\rho \phi_{\rho^c}
    &=
    \cartanlambda_{\rho} \wedge \eps(\sigma,\sigma^c) \lambda_\sigma \phi_{\sigma^c}
    ,
    \\
    \lambda_{\sigma^c} \cartanlambda_{\sigma} \wedge \eps(\rho,\rho^c) \phi_{\rho^c}
    &=
    \lambda_{\rho^c} \cartanlambda_{\rho} \wedge \eps(\sigma,\sigma^c) \phi_{\sigma^c}
    .
  \end{align*}
\end{lemma}

\begin{proof}
  This follows from Lemmas~\ref{prop:pairing_internal_sparsity}--\ref{prop:pairing_internal_offdiagonal}.
\springerqed\end{proof}

Loosely speaking, the pairings are Hermitian, sparse, and have non-zero diagonal entries. 
The following two main results of this section characterize the degeneracy spaces 
of those pairings of coefficients. In particular, 
the pairings are semidefinite.

\begin{theorem} \label{prop:first_duality_pairing}
 Let $r \in \bbN_{0}$ and $k \in [0:n]$.
 Let $\omega_{\alpha\sigma}$ be a family of complex numbers indexed over
 $\alpha \in A(r,n)$ and $\sigma \in \Sigma(k,n)$.
 Then we have
 \begin{align}
  \label{math:first_duality_pairing}
  \begin{split}
   & 
   \sum_{\alpha,\beta \in A(r,n)}
   \sum_{\sigma,\rho \in \Sigma(k,n)}
   \int_{T}
   \omega_{\alpha\sigma} \lambda^{\alpha} \cartanlambda_{\sigma}
   \wedge
   \eps(\rho,\rho^{c}) 
   \overline{\omega_{\beta\rho}} \lambda^{\beta} \lambda_{\rho} \phi_{\rho^{c}}
   \\&\quad=
   (-1)^{k}
   \sum_{\theta \in \Sigma(k+1,n) } \int_{T} \lambda_{\theta} 
   \Big\vert
   \sum_{\alpha \in A(r,n)} \sum_{p \in [\theta]}
   \eps(p,\theta-p) \lambda^{\alpha} \omega_{\alpha,\theta-p}
   \Big\vert^{2}
   \phi_{T}.
  \end{split}
\end{align}
 In particular, this term is zero if and only if one of the
 equivalent conditions of Theorem~\ref{prop:coefficientequivalence_eins}
 and Lemma~\ref{prop:coefficientcondition_eins} is satisfied.
\end{theorem}

\begin{proof}
 Let us write $S(\omega)$ for the left-hand side in Equation~\eqref{math:first_duality_pairing}. 
 We can split that sum into two parts. 
 On the one hand, we have the \emph{diagonal part},
 \begin{align*}
  S_d(\omega) 
  &:=
  \sum_{ \substack{ \alpha,\beta \in A(r,n) \\ \sigma \in \Sigma(k,n) } }
\int_{T}
  \omega_{\alpha\sigma} \overline{\omega_{\beta\sigma}} 
  \lambda^{\alpha+\beta} 
  \lambda_{\sigma} 
  \cartanlambda_{\sigma} \wedge \eps(\sigma,\sigma^{c}) \phi_{\sigma^{c}}
  \\&=
  \sum_{ \substack{ \alpha,\beta \in A(r,n) \\ \sigma \in \Sigma(k,n) } }
  \int_{T}
  \omega_{\alpha\sigma} \overline{\omega_{\beta\sigma}} 
  \lambda^{\alpha+\beta} 
  \lambda_{\sigma} 
  (-1)^k \sum_{q \in [\sigma^c]} \lambda_{q} \phi_T
  \\&=
  (-1)^k 
  \sum_{ \substack{ \alpha,\beta \in A(r,n) \\ \sigma \in \Sigma(k,n) } }
  \int_{T}
  \lambda^{\alpha+\beta} 
  \omega_{\alpha\sigma} \overline{\omega_{\beta\sigma}} 
  \lambda_{\sigma} 
  \sum_{q \in [\sigma^c]} \lambda_{q} \phi_T
  ,
 \end{align*}
where we have used Lemma~\ref{prop:pairing_internal_diagonal}.
 On the other hand, for the \emph{off-diagonal part},
 \begin{align*}
  &
  S_o(\omega) 
  :=
  \sum_{ \substack{ \alpha,\beta \in A(r,n) \\ \sigma,\rho \in \Sigma(k,n) \\ \sigma \neq \rho } }
  \int_{T}
  \lambda^{\alpha+\beta} 
  \omega_{\alpha\sigma} \overline{\omega_{\beta\rho}} 
  \cartanlambda_{\sigma} \wedge \eps(\rho,\rho^{c}) \lambda_{\rho} \phi_{\rho^{c}}
  \\&
  \quad
  =
  \sum_{ \substack{ \sigma \in \Sigma(k,n) \\ \alpha, \beta \in A(r,n) \\ p \in [\sigma], \; q \in [\sigma^c] } }
  (-1)^{k+1} 
  \int_{T}
  \lambda^{\alpha+\beta} 
  \omega_{\alpha\sigma} \overline{\omega_{\beta,\sigma - p + q}} 
  \eps(p,\sigma-p) \eps(q,\sigma-p)
  \lambda_{\sigma - p + q}
  \lambda_{p} 
  \phi_{T}
  , 
 \end{align*}
 where the last equality is due to 
 Lemma~\ref{prop:pairing_internal_sparsity} and Lemma~\ref{prop:pairing_internal_offdiagonal}.
 Since by definition $S(\omega) = S_d(\omega) + S_o(\omega)$, 
 we combine that $(-1)^{k} S(\omega)$ equals
 \begin{align*}
  \sum_{ \substack{ \alpha, \beta \in A(r,n) \\ \sigma \in \Sigma(k,n) \\ q \in [\sigma^c] } }
  \int_{T} 
  \lambda_{\sigma+q}
  \lambda^{\alpha+\beta}
  \omega_{\alpha\sigma}
  \left( 
   \overline{\omega_{\beta\sigma}}
   - 
   \sum_{p\in[\sigma]} 
   \eps(p,\sigma-p) \eps(q,\sigma-p) \overline{\omega_{\beta,\sigma-p+q}}
  \right)
  \phi_T
  .
 \end{align*}
 We want to rewrite the terms in the brackets.
 Let $\alpha,\beta \in A(r,n)$ be arbitrary. 
 For any $\sigma \in \Sigma(k,n)$ and $q \in [\sigma^{c}]$
 we consider $\theta = \sigma + q$. 
 One sees that 
 \begin{align*}
    &
    \overline{\omega_{\beta,\sigma}} 
    -
    \sum_{p \in [\sigma]}
    \eps(p,\sigma-p) \eps(q,\sigma-p) \overline{\omega_{\beta,\sigma-p+q}} 
    \\&=
    \overline{\omega_{\beta,\theta-q}} 
    -
    \sum_{p \in [\sigma]}
    \eps(p,\theta-q-p) \eps(q,\theta-q-p) \overline{\omega_{\beta,\theta-p}} 
    \\&=
    \overline{\omega_{\beta,\theta-q}} 
    -
    \sum_{p \in [\sigma]}
    \eps(p,q) \eps(p,\theta-p) \eps(q,p) \eps(q,\theta-q) \overline{\omega_{\beta,\theta-p}}
    ,
 \end{align*}
 where we have used some simple combinatorial observations. 
 Together with the simple fact $\eps(p,q) \eps(q,p) = -1$, 
 we can rewrite the previous sum as  
 \begin{align*}
\overline{\omega_{\beta,\theta-q}} 
    +
    \sum_{p \in [\sigma]}
    \eps(p,\theta-p) \eps(q,\theta-q) \overline{\omega_{\beta,\theta-p}} 
=
    \eps(q,\theta-q) 
    \sum_{p \in [\theta]}
    \eps(p,\theta-p) \overline{\omega_{\beta,\theta-p}} 
    .
 \end{align*}
 Equipped with that, we can finally rewrite the terms in brackets:
 a re-indexing of the sum shows that $(-1)^{k} S(\omega)$ equals
 \begin{align*}
  &
  \sum_{ \substack{ \alpha, \beta \in A(r,n) \\ \theta \in \Sigma(k+1,n) } }
  \int_{T}
  \lambda_{\theta}
  \lambda^{\alpha+\beta}
  \eps(q,\theta-q) 
  \omega_{\alpha,\theta-q}
  \left( 
   \sum_{p\in[\theta]} 
   \eps(p,\theta-p) 
   \overline{\omega_{\beta,\theta-p}}
  \right)
  \phi_T
  .
 \end{align*}
 A sharp look at this expression reveals that the integrals 
 contain the product of a complex-valued polynomial with its adjoint, 
 multiplied by the weight factor $\lambda_{\theta}$.
 Hence we simplify the expression to 
 \begin{align*}
  &
\sum_{ \theta \in \Sigma(k+1,n) }
  \int_{T}
  \lambda_{\theta}
    \Big\vert
    \sum_{ \substack{ \alpha \in A(r,n) } }
    \sum_{p\in[\theta]} 
    \eps(p,\theta-p) 
    \lambda^{\alpha}
    \omega_{\alpha,\theta-p}
    \Big\vert^{2}
  \phi_T
  . 
 \end{align*}
 The integrand vanishes if and only if
 the conditions of Theorem~\ref{prop:coefficientequivalence_eins}
 and Lemma~\ref{prop:coefficientcondition_eins}
 are fulfilled.
 This completes the proof.
\springerqed\end{proof}

\begin{theorem} \label{prop:second_duality_pairing}
 Let $r \in \bbN_{0}$ and $k \in [0:n]$.
 Let $\omega_{\alpha\sigma}$ be a family of complex numbers indexed over
 $\alpha \in A(r,n)$ and $\sigma \in \Sigma(k,n)$.
 Then we have
 \begin{align}
  \label{math:second_duality_pairing}
  \begin{split}
   &
   \sum_{\alpha,\beta \in A(r,n)}
   \sum_{\sigma,\rho \in \Sigma(k,n)}
   \int_{T}
   \omega_{\alpha\sigma} \lambda^{\alpha} \lambda_{\sigma^c} \cartanlambda_{\sigma}
   \wedge
   \eps(\rho,\rho^{c}) 
   \overline{\omega_{\beta\rho}} \lambda^{\beta} \phi_{\rho^{c}}
   \\&\quad
   =
   (-1)^{k}
   \sum_{\theta \in \Sigma(k+1,n) }
   \int_{T} \lambda_{\theta^c} 
   \Big\vert
   \sum_{ \beta \in A(r+1,n) } \sum_{ p \in [\theta] \cap [\beta] }
   \eps(p,\theta-p) \lambda^{\beta} \omega_{\beta-p,\theta-p}
   \Big\vert^{2}
   \phi_{T}
   .
  \end{split}
\end{align}
 In particular, this term is zero if and only if one of the
 equivalent conditions of Theorem~\ref{prop:coefficientequivalence_zwei}
 and Lemma~\ref{prop:coefficientcondition_zwei}
 is satisfied.
\end{theorem}

\begin{proof}
  This works similar as in the previous proof. 
  We write $S(\omega)$ for the left-hand side in Equation~\eqref{math:second_duality_pairing}  
  and split that sum into two parts. 
  On the one hand, we have the \emph{diagonal part},
  \begin{align*}
    S_d(\omega)
    &:=
\sum_{ \substack{ \alpha,\beta \in A(r,n) \\ \sigma \in \Sigma(k,n) } }
    \int_{T}
    \omega_{\alpha\sigma} \overline{\omega_{\beta\sigma}} 
    \lambda^{\alpha+\beta} 
    \lambda_{\sigma^c} \cartanlambda_{\sigma}
    \wedge
    \eps(\sigma,\sigma^{c}) 
    \phi_{\sigma^{c}}
    \\&=
    \sum_{ \substack{ \alpha,\beta \in A(r,n) \\ \sigma \in \Sigma(k,n) } }
    \int_{T}
    \omega_{\alpha\sigma} \overline{\omega_{\beta\sigma}}
    \lambda^{\alpha+\beta} \lambda_{\sigma^c}
    (-1)^{k}
    \sum_{ q \in [\sigma^c] } \lambda_{q} \phi_T
    ,
\end{align*}
 where the last equality is due to Lemma~\ref{prop:pairing_internal_diagonal}.
 On the other hand, for the \emph{off-diagonal part},
 Lemma~\ref{prop:pairing_internal_sparsity} and Lemma~\ref{prop:pairing_internal_offdiagonal} provide that 
 \begin{align*}
  S_o(\omega) 
  &:=
  \sum_{ \substack{ \alpha,\beta \in A(r,n) \\ \sigma,\rho \in \Sigma(k,n) \\ \sigma \neq \rho } }
  \int_{T}
  \omega_{\alpha\sigma} \overline{\omega_{\beta\rho}}
  \lambda^{\alpha+\beta} 
  \lambda_{\sigma^c} 
  \cartanlambda_{\sigma}
  \wedge
  \eps(\rho,\rho^{c}) 
  \phi_{\rho^{c}}
\\&=
  \sum_{ \substack{ \sigma \in \Sigma(k,n) \\ \alpha, \beta \in A(r,n) \\ p \in [\sigma], \; q \in [\sigma^c] } }
  \int_{T}
  \omega_{\alpha\sigma} \overline{\omega_{\beta,\sigma-p+q}}
  \lambda^{\alpha+\beta} \lambda_{\sigma^c}
  (-1)^{k+1}
  \eps(p,\sigma-p) \eps(q,\sigma-p)
  \lambda_{p}
  \phi_T
  .
 \end{align*}
 Since $S(\omega) = S_d(\omega) + S_o(\omega)$, 
 we derive that $(-1)^{k} S(\omega)$ equals
 \begin{align*}
  &
\sum_{ \substack{ \alpha,\beta \in A(r,n) \\ \sigma \in \Sigma(k,n) \\ q \in [\sigma^c] } }
  \int_T
  \omega_{\alpha\sigma}
  \lambda^{\alpha+\beta}
  \lambda_{\sigma^{c}}
  \left(
   \overline{\omega_{\beta\sigma}} \lambda_{q}
   -
   \sum_{p \in [\sigma]}
   \eps(p,\sigma-p) \eps(q,\sigma-p) \overline{\omega_{\beta,\sigma-p+q}} \lambda_{p}
  \right)
  \phi_T
  .
 \end{align*}
 We want to rewrite the terms in the brackets.
 Let $\alpha,\beta \in A(r,n)$ be arbitrary. 
 For any $\sigma \in \Sigma(k,n)$ and $q \in [\sigma^{c}]$
 we consider $\theta = \sigma + q$. 
 We observe 
 \begin{align*}
    &
    \overline{\omega_{\beta,\sigma}} \lambda_{q}
    -
    \sum_{p \in [\sigma]}
    \eps(p,\sigma-p) \eps(q,\sigma-p) \overline{\omega_{\beta,\sigma-p+q}} \lambda_{p}
    \\&=
    \overline{\omega_{\beta,\theta-q}} \lambda_{q}
    -
    \sum_{p \in [\sigma]}
    \eps(p,\theta-q-p) \eps(q,\theta-q-p) \overline{\omega_{\beta,\theta-p}} \lambda_{p}
    \\&=
    \overline{\omega_{\beta,\theta-q}} \lambda_{q}
    -
    \sum_{p \in [\sigma]}
    \eps(p,q) \eps(p,\theta-p) \eps(q,p) \eps(q,\theta-q) \overline{\omega_{\beta,\theta-p}} \lambda_{p}
    ,
 \end{align*}
 where we have used some simple combinatorial observations. 
Similar as in the foregoing proof, 
 we rewrite the previous sum as 
{\fontsize{9.2pt}{\baselineskip}\selectfont
\begin{align*}
\overline{\omega_{\beta,\theta-q}} \lambda_{q}
    +
    \sum_{p \in [\sigma]}
    \eps(p,\theta-p) \eps(q,\theta-q) \overline{\omega_{\beta,\theta-p}} \lambda_{p}
=
    \eps(q,\theta-q) 
    \sum_{p \in [\theta]}
    \eps(p,\theta-p) \overline{\omega_{\beta,\theta-p}} \lambda_{p}
    .
 \end{align*}
 }
 Thus we can rewrite the terms in the bracket and see that $(-1)^{k} S(\omega)$ equals 
\begin{align*}
  &
\sum_{ \substack{ \alpha,\beta \in A(r,n) \\ \theta \in \Sigma(k+1,n) \\ q \in [\theta] } }
  \int_T
  \lambda_{\theta^{c}}
  \lambda^{\alpha+\beta}
  \eps(q,\theta-q)
  \omega_{\alpha,\theta-q}
  \lambda_{q}
  \left(
   \sum_{p \in [\theta]}
   \eps(p,\theta-p) \overline{\omega_{\beta,\theta-p}} \lambda_{p}
  \right)
  \phi_T
  .
 \end{align*}
 In analogy to the previous proof, we simplify this to 
\begin{align*}
  &
  \sum_{ \theta \in \Sigma(k+1,n) }
  \int_T
  \lambda_{\theta^{c}}
  \Big\vert
   \sum_{\alpha \in A(r,n)}
   \sum_{p \in [\theta]}
   \eps(p,\theta-p)
   \omega_{\alpha,\theta-p}
   \lambda^{\alpha}
   \lambda_{p}
  \Big\vert^{2}
  \phi_T
  \\&=
  \sum_{ \theta \in \Sigma(k+1,n) }
  \int_T
  \lambda_{\theta^{c}}
  \Big\vert
   \sum_{\beta \in A(r+1,n)}
   \sum_{p \in [\theta] \cap [\beta] }
   \eps(p,\theta-p)
   \omega_{\beta-p,\theta-p}
   \lambda^{\beta}
  \Big\vert^{2}
  \phi_T
  . \end{align*}
 The integrand vanishes if and only if
 the conditions of Theorem~\ref{prop:coefficientequivalence_zwei}
 and Lemma~\ref{prop:coefficientcondition_zwei}
 are fulfilled.
 This completes the proof.
\springerqed\end{proof}

Theorems~\ref{prop:first_duality_pairing}~and~\ref{prop:second_duality_pairing}
show that the Hermitian bilinear pairings \eqref{math:coefficientpairing:eins} and \eqref{math:coefficientpairing:zwei} 
are semidefinite:
positive or negative semidefinite depending on whether $k$ is even or odd. 
Evidently, 
the degeneracy space of the first bilinear form 
is the linear subspace of $\calP(k,r,n)$ 
spanned by those coefficient vectors 
that satisfy the conditions of Theorem~\ref{prop:coefficientequivalence_eins}
and Lemma~\ref{prop:coefficientcondition_eins}.
Analogously,
the degeneracy space of the second bilinear form 
is the linear subspace of $\calP(k,r,n)$ 
spanned by those coefficient vectors 
that satisfy the conditions of Theorem~\ref{prop:coefficientequivalence_zwei} 
and Lemma~\ref{prop:coefficientcondition_zwei}.

The main consequence is this: these bilinear forms on the coefficient spaces $\calP(k,r,n)$
describe non-degenerate bilinear pairings of isomorphic finite element spaces. 
Specifically, 
we have non-degenerate bilinear forms 
\begin{align}
 \label{math:finiteelementpairing:first}
 (\omega,\eta)
 \mapsto
 \int_{T} \omega \wedge \eta,
 \quad
 \omega \in \calP_{r}\Lambda^{k}(T),
 \quad
 \eta \in \mathring\calP^{-}_{r+k+1}\Lambda^{n-k}(T)
 ,
 \\ 
 \label{math:finiteelementpairing:second}
 (\omega,\eta)
 \mapsto
 \int_{T} \omega \wedge \eta,
 \quad
 \omega \in \mathring\calP_{r+n-k+1}\Lambda^{k}(T),
 \quad
 \eta \in \calP^{-}_{r+1}\Lambda^{n-k}(T)
 .
\end{align}

\begin{remark}
 \label{rem:duality:lit}
 Our Theorem~\ref{prop:first_duality_pairing} extends Proposition 3.7 in \cite{christiansen2013high},
 while our Theorem~\ref{prop:second_duality_pairing} states the natural
 but hitherto unpublished analogue for the second isomorphism relation. 
 Our first duality pairing is also used in Lemma~4.11 of \cite{AFW1},
 whereas our second duality pairing is utilized in Lemma~4.7 of \cite{AFW1}. 
\end{remark}

\section{Geometric Decompositions and Degrees of Freedom}
\label{sec:geometricdecompositions}

In this section we describe geometric decompositions of finite element spaces and their degrees of freedom. 
We present the main ideas within an abstract framework. 
The families of finite element spaces $\calP_{r}\Lambda^{k}(\calT)$ and $\calP_{r}^{-}\Lambda^{k}(\calT)$ are the natural applications.
This section is expositional and shows how the non-degeneracy of the duality pairings in the previous section
leads to a direct construction of the degrees of freedom. 

Throughout this section, we let $\calT$ be a collection of simplices satisfying the following conditions:
(i) for every $T \in \calT$ and every subsimplex $F \subseteq T$ we have $F \in \calT$, 
(ii) for every two $T, T' \in \calT$ we either have $T \cap T' = \emptyset$ or $T \cap T' \in \calT$, 
(iii) we have $\dim T \leq n$ for every $T \in \calT$.

We assume to be given $X^{k}(T) \subseteq \Lambda^{k}(T)$ 
for each cell $T \in \calT$ such that 
for every $F,T \in \calT$ with $F \subseteq T$ we have the surjectivity condition 
$\trace_{T,F} X^{k}(T) = X^{k}(F)$.
We write $\mathring X^{k}(T)$ for the subspace of $k$-forms
with vanishing boundary traces:
\begin{align}
 \label{math:definitiontracefree}
 \mathring X^{k}(T)
 =
 \left\{\; \omega \in X^{k}(T) \suchthat* \forall F \in \calT, F \subsetneq T : \trace_{T,F} \omega = 0 \;\right\}.
\end{align}
Let us abbreviate $X^{k}_{-1}(\calT) := \bigoplus_{T \in \calT, \dim T = n} X^{k}(T)$
for the direct sum of vector spaces associated to the $n$-simplices.
We say that $\omega \in X^{k}_{-1}(\calT)$ is \emph{single-valued}
if for all $n$-dimensional simplices $T,T' \in \calT$ with non-empty intersection $F = T \cap T'$
we have $\trace_{T,F} \omega_{T} = \trace_{T',F} \omega_{T'}$. 
The set $X^{k}(\calT)$ of single-valued members of $X^{k}_{-1}(\calT)$ is a subspace.
The \emph{global trace operators} 
\begin{align}
 \label{math:globaltraceoperator}
 \Trace_{\calT,F} : X^{k}(\calT) \rightarrow X^k(F)
\end{align}
onto any $F \in \calT$ are defined in the obvious way. 
We remark that this notion of \emph{single-valued} agrees with the definition 
in \cite{AFWgeodecomp}.
Note that for every single-valued $\omega \in X^{k}(\calT)$
we have $\Trace_{\calT,F} \omega = 0$ if $\dim F < k$.

\begin{remark}
 $X^{k}(\calT)$ 
 captures the idea of a \emph{conforming} finite element space.
For example, we have the full spaces of barycentric polynomial differential forms,
 where $X^{k}(T) = \calP_{r}\Lambda^{k}(T)$ 
 and $\mathring X^{k}(T) = \mathring\calP_{r}\Lambda^{k}(T)$ 
 for each $T \in \calT$.
 Here, $\calP_{r}\Lambda^{k}(\calT) := X^{k}(\calT)$ is common notation. 
Another example are the spaces of higher order Whitney forms,
 where $X^{k}(T) = \calP_{r}^{-}\Lambda^{k}(T)$ 
 and $\mathring X^{k}(T) = \mathring\calP_{r}^{-}\Lambda^{k}(T)$ 
 for each $T \in \calT$.
 In this case, $\calP_{r}^{-}\Lambda^{k}(\calT) := X^{k}(\calT)$ is common notation. 
\end{remark}

Our abstract framework requires the existence of extension operators. 
For all $F, T \in \calT$ with $F \subseteq T$ we assume to have a linear mapping 
\begin{align*} 
 \ext_{F,T} : \mathring X^{k}(F) \rightarrow X^{k}(T).
\end{align*}
We assume that these are generalized inverses of the trace operators,
\begin{align}
 \label{math:extensionisrightinverse}
 \trace_{T,F} \ext_{F,T} \omega = \omega, 
 \quad
 \omega \in \mathring X^{k}(F),
\end{align}
and satisfy the following two conditions:
on the one hand, we require that extensions to different simplices have the same trace on common subsimplices,
\begin{align}
 \label{math:extensionproperty_uno}
 \ext_{F,G} \omega = \trace_{T,G} \ext_{F,T} \omega,
 \quad
 \omega \in \mathring X^{k}(F),
 \quad
 F \subseteq G \subseteq T,
 \quad
 F, G, T \in \calT,
\end{align}
and on the other hand, we require that the extension 
has zero trace on all simplices of $\calT$ that do not contain the original simplex,
\begin{align}
 \label{math:extensionproperty_due}
 \trace_{T,G} \ext_{F,T} \omega = 0,
 \quad 
 \omega \in \mathring X^{k}(F),
 \quad
 F,G \subseteq T,
 \quad
 F \nsubseteq G,
 \quad
 F, G, T \in \calT.
\end{align}
Under these assumptions, 
we easily verify that the \emph{global extension operators} 
\begin{align}
 \label{math:globalextensionoperator}
 \Ext_{F,\calT} : \mathring X^{k}(F) \rightarrow X^{k}(\calT),
 \quad
 \omega_F \mapsto \sum_{ \substack{ F, T \in \calT \\ F \subseteq T, \; \dim T = n } } \ext_{F,T} \omega_F,
\end{align}
are well-defined. To every trace in $\mathring X^{k}(F)$ over some simplex $F \in \calT$ 
we thus associate a single-valued differential form with that trace and that is localized around $F$. 

\begin{remark}
  We show two examples of extension operators satisfying the abstract assumptions.
  The two families of extension operators discussed in Subsection~\ref{subsec:finiteelementspaces:extension}, 
  \begin{align}
   \ext^{k,r}_{F,T} : \mathring\calP_{r}\Lambda^{k}(F) \rightarrow \calP_{r}\Lambda^{k}(T),
   \quad 
   \ext^{k,r,-}_{F,T} : \mathring\calP_{r}^{-}\Lambda^{k}(F) \rightarrow \calP_{r}^{-}\Lambda^{k}(T),
  \end{align}
  satisfy the required conditions of this section, 
  and thus lead to geometric decompositions 
  of $\calP_{r}\Lambda^{k}(\calT)$ and $\calP_{r}^{-}\Lambda^{k}(\calT)$, respectively. 
\end{remark}

\begin{theorem} \label{prop:geometricdecomposition}
 Suppose that $\omega \in X^{k}(\calT)$.
 Then there exist unique $\omega_F \in \mathring X^{k}(F)$ for every $F \in \calT$
 such that
 \begin{align}
  \label{math:geometricdecomposition}
  \omega = \sum_{ F \in \calT} \Ext_{F,\calT} \omega_F.
 \end{align}
\end{theorem}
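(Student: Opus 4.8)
The plan is to reduce everything to a single \emph{reproducing formula} for the global trace of a global extension, and then run an induction on $\dim F$. First I would establish that for $G, F \in \calT$ and $\omega_G \in \mathring X^{k}(G)$,
\begin{equation*}
 \Trace_{\calT,F}\Ext_{G,\calT}\omega_G
 =
 \begin{cases}
  \ext_{G,F}\omega_G & \text{if } G \subseteq F,\\
  0 & \text{if } G \not\subseteq F.
 \end{cases}
\end{equation*}
This follows by unfolding definition~\eqref{math:globalextensionoperator}, evaluating the global trace through some $n$-simplex $T \supseteq F$, and observing that the $T$-component of $\Ext_{G,\calT}\omega_G$ is $\ext_{G,T}\omega_G$ when $G \subseteq T$ and zero otherwise. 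In the case $G \subseteq F$ the compatibility~\eqref{math:extensionproperty_uno} with $G \subseteq F \subseteq T$ gives $\trace_{T,F}\ext_{G,T}\omega_G = \ext_{G,F}\omega_G$; in the case $G \not\subseteq F$ the locality~\eqref{math:extensionproperty_due} (or a vanishing component) gives zero. Taking $F = G$ and using~\eqref{math:extensionisrightinverse} with $T = G$, so that $\ext_{G,G}$ is the identity, yields the special case $\Trace_{\calT,F}\Ext_{F,\calT}\omega_F = \omega_F$.

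With this in hand, uniqueness is immediate. If $\omega = \sum_{F}\Ext_{F,\calT}\omega_F$, then applying $\Trace_{\calT,F}$ and the reproducing formula gives
\begin{equation*}
 \Trace_{\calT,F}\omega
 =
 \sum_{G \subseteq F}\ext_{G,F}\omega_G
 =
 \omega_F + \sum_{G\subsetneq F}\ext_{G,F}\omega_G,
\end{equation*}
so that each component is forced to equal $\omega_F = \Trace_{\calT,F}\omega - \sum_{G\subsetneq F}\ext_{G,F}\omega_G$, a quantity determined by $\omega$ together with the strictly lower-dimensional components. Hence the $\omega_F$ are unique, by induction on $\dim F$.

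For existence I would promote this recursion to a definition, proceeding upward in dimension, and verify two things. First, that $\omega_F \in \mathring X^{k}(F)$: for a proper face $F' \subsetneq F$ I would compose traces via $\trace_{F,F'}\Trace_{\calT,F} = \Trace_{\calT,F'}$, split each term $\trace_{F,F'}\ext_{G,F}\omega_G$ according to whether $G \subseteq F'$ (using~\eqref{math:extensionproperty_uno} and~\eqref{math:extensionproperty_due}), and recognize the surviving sum $\sum_{G \subseteq F'}\ext_{G,F'}\omega_G$ as $\Trace_{\calT,F'}\omega$ by the inductive definition of $\omega_{F'}$; the two contributions then cancel, giving $\trace_{F,F'}\omega_F = 0$. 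Second, that the decomposition~\eqref{math:geometricdecomposition} holds: since both sides lie in $X^{k}(\calT) \subseteq X^{k}_{-1}(\calT)$, and a member of this space is determined by its components on the $n$-simplices, that is, by its global traces, it suffices to check that both sides have the same trace on every $F \in \calT$; the reproducing formula reduces this once more to the identity $\sum_{G \subseteq F}\ext_{G,F}\omega_G = \Trace_{\calT,F}\omega$ already extracted above.

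The conceptual core, and the step most prone to error, is the \emph{trace-freeness} verification, because it is the place where the recursive definition feeds back on itself. One must organize the induction so that the cancellation identity $\sum_{G \subseteq F'}\ext_{G,F'}\omega_G = \Trace_{\calT,F'}\omega$ is already available at level $F'$ before it is invoked at level $F$; the bookkeeping is governed entirely by the containment dichotomy $G \subseteq F'$ versus $G \not\subseteq F'$. Once the reproducing formula and this cancellation are secured, both uniqueness and the decomposition follow essentially formally.
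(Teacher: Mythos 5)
Your proposal is correct, and its skeleton --- a recursion over the dimension of the faces, peeling off traces one dimension at a time --- is the same as the paper's. The differences are in organization and completeness, and they work in your favor. The paper's proof forms partial sums $\omega^{(m)} = \sum_{\dim F \leq m}\Ext_{F,\calT}\omega_F$ and, for each $(m{+}1)$-dimensional $F$, in effect sets $\omega_F := \Trace_{\calT,F}(\omega - \omega^{(m)})$ (as written there the correction term $\omega^{(m)}$ is even dropped, which is a slip: $\Trace_{\calT,F}\omega$ alone need not lie in $\mathring X^{k}(F)$); membership in $\mathring X^{k}(F)$ is asserted rather than verified, and uniqueness is never addressed even though the theorem claims it. You instead isolate the reproducing formula $\Trace_{\calT,F}\Ext_{G,\calT}\omega_G = \ext_{G,F}\omega_G$ for $G \subseteq F$ (and $=0$ otherwise) as a standalone lemma; this is precisely the mechanism that makes the paper's recursion work but is left implicit there. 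From it you get uniqueness by a clean induction on dimension --- a genuine addition relative to the paper's proof --- and your cancellation computation $\trace_{F,F'}\omega_F = \Trace_{\calT,F'}\omega - \sum_{G\subseteq F'}\ext_{G,F'}\omega_G = 0$ supplies the trace-freeness verification the paper skips. Your per-simplex bookkeeping (sums of $\ext_{G,F}\omega_G$ over $G \subseteq F$) and the paper's global partial sums $\omega^{(m)}$ are equivalent via the reproducing formula, so the two existence arguments coincide in substance. In short: same strategy, but your write-up proves both halves of the statement and fills real gaps in the paper's own proof.
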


\begin{proof}
 Let $\omega \in X^k(\calT)$.
 We prove the theorem via recursion.
 Recall $\Trace_{\calT,F} \omega = 0$ whenever $F \in \calT$ with $\dim(F) < k$.
 Let $\omega_V := \Trace_{\calT,V} \omega \in \mathring X^{k}(V)$
 for every vertex $V \in \calT$ of the simplicial complex
 and let 
 \begin{align*}
  \omega^{(0)} := \sum_{ V \in \calT, \; \dim V = 0 } \Ext_{V,\calT} \omega_V
  .
 \end{align*}
 Then
 $\Trace_{\calT,V}( \omega - \omega^{(0)} ) = 0$ 
 for every $0$-dimensional $V \in \calT$.
 
 Now assume that for some $m \in [0:n-1]$ the following holds:
 for every $F \in \calT$ of dimension at most $m$
 there exists $\omega_F \in \mathring X^{k}(F)$ such that,
 letting 
 \begin{align*}
  \omega^{(m)} := \sum_{ F \in \calT, \; \dim F \leq m } \Ext_{F,\calT} \omega_F,
 \end{align*}
 we have $\Trace_{\calT,F}( \omega - \omega^{(m)} ) = 0$
 for every $F \in \calT$ of dimension at most $m$.
 For every $F \in \calT$ of dimension $m+1$, 
 we set $\omega_F := \Trace_{\calT,F} ( \omega - \omega^{(m)} ) \in \mathring X^{k}(F)$. 
Letting 
 \begin{align*}
  \omega^{(m+1)} := \sum_{ F \in \calT, \; \dim F \leq m+1 } \Ext_{F,\calT} \omega_F,
 \end{align*}
 it follows that $\Trace_{\calT,F}( \omega - \omega^{(m+1)} ) = 0$
 for every $F \in \calT$ of dimension at most $m+1$.
 
 Iterating this construction produces $\omega_{F} \in \mathring X^{k}(F)$
 for every $F \in \calT$ such that 
 \begin{align*}
  \omega - \sum_{ F \in \calT } \Ext_{F,\calT} \omega_F
 \end{align*}
 has vanishing trace on every $F \in \calT$. 
 Thus \eqref{math:geometricdecomposition} follows. 
 
 To show uniqueness of the construction it suffices to show 
 that $\omega = 0$ only if $\omega_F = 0$ for all $F \in \calT$.
 Suppose that there exists $F \in \calT$ such that $\omega_F \neq 0$. 
 Without loss of generality, 
 $\omega_f = 0$ for all $f \in \calT$ with $\dim f \leq \dim F$.
 Then $\Trace_{\calT,F} \omega = \omega_F$.
 Since $\omega_F \neq 0$ we must have $\omega \neq 0$, 
 which had to be shown and implies uniqueness of the decomposition. 
\springerqed\end{proof}

\begin{remark}
 Any basis of $\mathring X^{k}(F)$ gives a basis of $\Ext_{F,\calT} \mathring X^{k}(F)$. 
 In the light of the geometric decomposition \eqref{math:geometricdecomposition},
 we see that choosing a basis for each space $\mathring X^{k}(F)$ 
 gives rise to a geometrically decomposed basis for $X^{k}(\calT)$. 
Our exposition, and many of the references cited, emphasize the definition
 of the finite element basis directly on the physical cells.
 Transformation from reference cells is another widespread formalism for
 the generation of finite element bases \cite{ern2021finite}.
\end{remark}

We finish this section with a discussion of the degrees of freedom. 
For each $F \in \calT$ of dimension $\dim F = m$ we define the spaces of functionals 
\begin{align*}
  W_{k,r}(F)
  &:=
  \left\{
    \phi \in \calP_{r}\Lambda^{k}(\calT)^{\ast}
    \suchthat*
    \exists \eta \in \calP_{r+k-m}^{-}\Lambda^{m - k}(F):
    \phi(\cdot)
    =
    \int_{F} \Trace \cdot \wedge \eta 
  \right\},
  \\
  W_{k,r}^{-}(F)
  &:=
  \left\{
    \phi \in \calP_{r}^{-}\Lambda^{k}(\calT)^{\ast} 
    \suchthat*
    \exists \eta \in \calP_{r+k-m-1}\Lambda^{m - k}(F):
    \phi(\cdot)
    =
    \int_{F} \Trace \cdot \wedge \eta 
  \right\}.
\end{align*}
We have isomorphisms $W_{k,r}(F) \simeq \mathring\calP_{r}\Lambda^{k}(F)^{\ast}$
and $W_{k,r}^{-}(F) \simeq \mathring\calP_{r}^{-}\Lambda^{k}(F)^{\ast}$,
as is evident from considering the pairings \eqref{math:finiteelementpairing:first} 
and \eqref{math:finiteelementpairing:second}.
Notice that $W_{k,r}(F)$ and $W_{k,r}^{-}(F)$ are trivial when $\dim(F) < k$.
We define the spaces 
\begin{align} \label{math:globaldofspace}
  W_{k,r}(\calT)
  :=
  \sum_{ F \in \calT, \; \dim F \geq k } W_{k,r}(F)
  ,
  \quad 
  W_{k,r}^{-}(\calT)
  :=
  \sum_{ F \in \calT, \; \dim F \geq k } W_{k,r}^{-}(F)
  .
\end{align}
These are spaces of functionals over conforming finite element spaces. 
In fact, they are direct sums and already constitute the entire degrees of freedom of the finite element spaces.

\begin{theorem}
 For $r \in \bbN_{0}$ and $k \in [0:n]$,
 the sums \eqref{math:globaldofspace} are direct and we have 
 \begin{align}
  \calP_{r}\Lambda^{k}(\calT)^{\ast}
  = 
  W_{k,r}(\calT)
  ,
  \quad 
  \calP_{r}^{-}\Lambda^{k}(\calT)^{\ast}
  = 
  W_{k,r}^{-}(\calT)
  .
 \end{align}
\end{theorem}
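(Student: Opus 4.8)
The plan is to reduce the statement to the geometric decomposition of Theorem~\ref{prop:geometricdecomposition} combined with the non-degenerate duality pairings \eqref{math:finiteelementpairing:first} and \eqref{math:finiteelementpairing:second}. I treat $\calP_r\Lambda^k(\calT)$ in detail; the claim for $\calP_r^-\Lambda^k(\calT)$ is the same argument with the two pairings exchanged. The decomposition furnishes a linear isomorphism $R : \bigoplus_{F \in \calT} \mathring\calP_r\Lambda^k(F) \to \calP_r\Lambda^k(\calT)$, $(\omega_F)_F \mapsto \sum_F \Ext_{F,\calT}\omega_F$, because Theorem~\ref{prop:geometricdecomposition} provides such a representation for every $\omega$ (surjectivity) with unique components $\omega_F$ (injectivity). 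Dualizing $R$ identifies $\calP_r\Lambda^k(\calT)^{\ast}$ with $\bigoplus_{F}\mathring\calP_r\Lambda^k(F)^{\ast}$, so in particular $\dim\calP_r\Lambda^k(\calT)^{\ast} = \sum_{F}\dim\mathring\calP_r\Lambda^k(F)$.

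First I would record, from \eqref{math:extensionproperty_uno} and \eqref{math:extensionproperty_due}, that $\Trace_{\calT,F}\Ext_{G,\calT}\omega_G = \ext_{G,F}\omega_G$ when $G\subseteq F$ and vanishes otherwise; together with \eqref{math:extensionisrightinverse} at $G=F$ this gives, for $\omega = \sum_G\Ext_{G,\calT}\omega_G$,
\begin{align*}
 \Trace_{\calT,F}\omega = \sum_{G\subseteq F}\ext_{G,F}\omega_G,
 \qquad
 \ext_{F,F}\omega_F = \omega_F.
\end{align*}
Hence, for $\phi\in W_{r,k}(F)$ with $\phi(\cdot)=\int_F\eta\wedge\Trace_{\calT,F}(\cdot)$ and $\eta\in\calP_{r+k-m}^-\Lambda^{m-k}(F)$, $m=\dim F$, one obtains $\phi(\omega)=\sum_{G\subseteq F}\int_F\eta\wedge\ext_{G,F}\omega_G$. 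In the coordinates supplied by $R$ this exhibits $\phi$ as a functional whose $G$-block vanishes unless $G\subseteq F$ and whose diagonal $F$-block is precisely $\omega_F\mapsto\int_F\eta\wedge\omega_F$. Specializing the second duality pairing \eqref{math:finiteelementpairing:second} to the $m$-simplex $F$, with parameters so that $\mathring\calP_r\Lambda^k(F)$ pairs with $\calP_{r+k-m}^-\Lambda^{m-k}(F)$, shows this diagonal block is exactly the non-degenerate pairing, which re-derives the isomorphism $W_{r,k}(F)\simeq\mathring\calP_r\Lambda^k(F)^{\ast}$.

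The remaining step is to prove that the summation map $\Psi:\bigoplus_{F}W_{r,k}(F)\to\calP_r\Lambda^k(\calT)^{\ast}$, $(\phi_F)_F\mapsto\sum_F\phi_F$, whose image is exactly $W_{r,k}(\calT)$, is injective. I would use the block-triangular structure just isolated. Suppose $\sum_F\phi_F=0$ with not all $\phi_F$ zero, and let $F_0$ have maximal dimension among faces with $\phi_{F_0}\neq0$. Evaluating on $\omega$ whose only nonzero geometric component is $\omega_{F_0}\in\mathring\calP_r\Lambda^k(F_0)$ kills $\phi_F(\omega)$ for $F\not\supseteq F_0$ by the block structure, while $\phi_F=0$ for $F\supsetneq F_0$ by maximality, leaving $\int_{F_0}\eta_{F_0}\wedge\omega_{F_0}=0$ for all $\omega_{F_0}$. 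Non-degeneracy forces $\eta_{F_0}=0$, hence $\phi_{F_0}=0$, a contradiction. Thus $\Psi$ is injective, so
\begin{align*}
 \dim W_{r,k}(\calT)
 =
 \sum_{F\in\calT}\dim W_{r,k}(F)
 =
 \sum_{F\in\calT}\dim\mathring\calP_r\Lambda^k(F)
 =
 \dim\calP_r\Lambda^k(\calT)^{\ast},
\end{align*}
and since $W_{r,k}(\calT)\subseteq\calP_r\Lambda^k(\calT)^{\ast}$ this equality of dimensions gives $W_{r,k}(\calT)=\calP_r\Lambda^k(\calT)^{\ast}$. The identity $\calP_r^-\Lambda^k(\calT)^{\ast}=W_{r,k}^-(\calT)$ follows identically, now using the first duality pairing \eqref{math:finiteelementpairing:first} to obtain $W_{r,k}^-(F)\simeq\mathring\calP_r^-\Lambda^k(F)^{\ast}$ and the geometric decomposition of $\calP_r^-\Lambda^k(\calT)$.

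The main obstacle, I expect, is the bookkeeping that puts the block-triangular picture on rigorous footing: verifying the trace identity $\Trace_{\calT,F}\Ext_{G,\calT}=\ext_{G,F}$ (respectively $0$) purely from the local extension axioms \eqref{math:extensionisrightinverse}--\eqref{math:extensionproperty_due}, and confirming that the diagonal block genuinely coincides with the non-degenerate pairing of Section~\ref{sec:duality} under the correct parameter shift. Once this is secured, the injectivity argument and the dimension count are routine.
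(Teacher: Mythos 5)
Your proposal is correct, but it runs in the opposite direction from the paper's own proof. The paper argues on the primal side: it takes $\omega \in \calP_{r}\Lambda^{k}(\calT)$ annihilated by every functional in $W_{r,k}(\calT)$, writes $\omega = \sum_{F} \Ext_{F,\calT}\omega_{F}$ via Theorem~\ref{prop:geometricdecomposition}, and shows $\omega_{F} = 0$ by induction on $\dim F$ from below: once all lower-dimensional components vanish, $\Trace_{\calT,F}\omega = \omega_{F}$ (this is exactly the block-triangular identity you isolate explicitly), and testing against $W_{r,k}(F)$ kills $\omega_{F}$ by non-degeneracy of the pairing; triviality of the annihilator then yields $W_{r,k}(\calT) = \calP_{r}\Lambda^{k}(\calT)^{\ast}$ by finite-dimensional duality, with no dimension bookkeeping at all. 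You instead work on the dual side: you prove the summation map $\Psi \colon \bigoplus_{F} W_{r,k}(F) \to \calP_{r}\Lambda^{k}(\calT)^{\ast}$ is injective by a maximal-face argument and then count dimensions against the geometric decomposition. The two inductions are mirror images (the paper's runs bottom-up over the primal components, yours picks a top-dimensional face among the functionals), and both rest on the same two ingredients: Theorem~\ref{prop:geometricdecomposition} and the non-degenerate pairings \eqref{math:finiteelementpairing:first}, \eqref{math:finiteelementpairing:second}. What your route buys: injectivity of $\Psi$ is precisely the statement that the sum $\sum_{F} W_{r,k}(F)$ is direct, which is the content of the theorem immediately following this one in the paper (proved there by a separate, positivity-based induction), so your argument delivers both results at once. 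What it costs: you must justify the dimension identities $\dim W_{r,k}(F) = \dim \mathring\calP_{r}\Lambda^{k}(F)$ and $\dim \calP_{r}\Lambda^{k}(\calT) = \sum_{F} \dim \mathring\calP_{r}\Lambda^{k}(F)$, which the annihilator argument sidesteps. Your parameter matching is correct ($\mathring\calP_{r}\Lambda^{k}(F)$ pairs with $\calP^{-}_{r+k-m}\Lambda^{m-k}(F)$ via \eqref{math:finiteelementpairing:second}, and $\mathring\calP^{-}_{r}\Lambda^{k}(F)$ with $\calP_{r+k-m-1}\Lambda^{m-k}(F)$ via \eqref{math:finiteelementpairing:first}), and the trace-of-extension identity you flag as the main obstacle does follow directly from \eqref{math:extensionisrightinverse}--\eqref{math:extensionproperty_due} applied on each top-dimensional simplex, exactly as the paper itself uses implicitly.
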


\begin{proof}
 We state the proof for the first identity;
 the proof for the second identity is completely analogous. 
 Recall that $W_{k,r}(\calT) \subseteq \calP_{r}\Lambda^{k}(\calT)^{\ast}$.
 To show the reverse inclusion, 
 it suffices to show that for any $\omega \in \calP_{r}\Lambda^{k}(\calT)$
 satisfying $\eta(\omega) = 0$ for all $\eta \in W_{k,r}(\calT)$, 
 we already have $\omega = 0$.
 Let $\omega$ have that property. 
 
 Recall that by Theorem~\ref{prop:geometricdecomposition} 
 we have $\omega = \sum_{F \in \calT} \Ext_{F,\calT} \omega_{F}$
 with unique $\omega_{F} \in \mathring\calP_{r}\Lambda^{k}(F)$ for each $F \in \calT$. 
Let $m \in \bbN_{0}$
 such that $\omega_{F} = 0$ for every $F \in \calT$ with $\dim F < m$.
 Let $F \in \calT$ with $\dim F = m$. 
 By assumption, $\Trace_{\calT,F} \omega = \omega_{F}$,
 and since $\eta(\omega) = 0$ for all $\eta \in W_{k,r}(F)$,
 we have $\omega_{F} = 0$. 
 Iterating this, starting with $m=k$ and increasing $m$, we obtain $\omega = 0$.
 That establishes $\calP_{r}\Lambda^{k}(\calT)^{\ast} = W_{k,r}(\calT)$.

 We want to show that the sum defining $W_{k,r}(\calT)$ is direct. 
 Let $\eta \in W_{k,r}(\calT)$ be given by $\eta = \sum_{F \in \calT, \; \dim F \geq k} \eta_{F}$,
 where $\eta_{F} \in W_{k,r}(F)$ are not all zero. 
 We need to show $\eta \neq 0$.
 For that it suffices to find $\omega \in W_{k,r}(\calT)$ such that $\eta(\omega) \neq 0$.
 Note that there exists $F \in \calT$ such that $\eta_F \neq 0$ but $\eta_T = 0$
 for all $T \in \calT$ with $\dim(T) > \dim(F)$.
 We fix such $F$ and pick $\omega_F \in \mathring\calP_{r}\Lambda^{k}(F)$ 
 such that $\eta_F(\omega_F) \neq 0$.
 Then $\omega := \Ext_{F,\calT} \omega_{F}$
 satisfies $\eta(\omega) = \eta_F(\omega_F)$,
 and the desired result follows. 
\springerqed\end{proof}

\subsection*{Acknowledgements}
The author enjoyed helpful discussions with Douglas N.\ Arnold and Snorre H.\ Christiansen.
The referees' careful reading and valuable comments are appreciated.

\end{document}